\theoremstyle{plain}
\newtheorem{theorem}{Theorem}[section]
\newtheorem{lemma}[theorem]{Lemma}
\newtheorem{claim}[theorem]{Claim}
\newtheorem{corollary}[theorem]{Corollary}
\theoremstyle{definition} 
\newtheorem{remark}[theorem]{Remark}
\def\@gifnextchar#1#2#3{\let\@tempe#1\def\@tempa{#2}\def\@tempb{#3}%
  \futurelet\@tempc\@gifnch}
\def\@gifnch{\ifx\@tempc\@sptoken\let\@tempd\@tempb%
  \else\ifx\@tempc\@tempe\let\@tempd\@tempa\else\let\@tempd\@tempb\fi\fi\@tempd}
\def\SK@set#1{\left\{#1\right\}}
\def\SK@@set#1#2{\{#1\,:\,
    \begin{array}{@{}l@{}}#2\end{array}
\}}
\def\SK@mset#1{\left\{\!\!\left\{#1\right\}\!\!\right\}}
\def\SK@@mset#1#2{\{\!\!\{#1\,:\,
    \begin{array}{@{}l@{}}#2\end{array}
\}\!\!\}}
\def\BIG@set#1{\Big\{#1\Big\}}
\def\BIG@@set#1#2{\Big\{#1\:\Big|\:
    \begin{array}{@{}l@{}}#2\end{array}
\Big\}}
\newcommand{\Set}[1]{\@gifnextchar\bgroup{\SK@@set{#1}}{\SK@set{#1}}}
\newcommand{\Mset}[1]{\@gifnextchar\bgroup{\SK@@mset{#1}}{\SK@mset{#1}}}
\newcommand{\Bigset}[1]{\@gifnextchar\bgroup{\BIG@@set{#1}}{\BIG@set{#1}}}
\title{Weak saturation in graphs: a combinatorial approach}
\author{Nikolai Terekhov\thanks{Department of Discrete Mathematics, Moscow Institute of Physics and Technology, Dolgoprudny, Russia; nikolayterek@gmail.com
}, Maksim Zhukovskii\thanks{Department of Computer Science, University of Sheffield, Sheffield S1 4DP, UK; zhukmax@gmail.com}
}
\date{}
\begin{document}

\maketitle

\begin{abstract}
The weak saturation number $\mathrm{wsat}(n,F)$ is the minimum number of edges in a graph on $n$ vertices such that all the missing edges can be activated sequentially so that each new edge creates a copy of $F$. A usual approach to prove a lower bound for the weak saturation number is algebraic	: if it is possible to embed edges of $K_n$ in a vector space in a certain way (depending on $F$), then the dimension of the subspace spanned by the images of the edges of $K_n$ is a lower bound for the weak saturation number. In this paper, we present a new combinatorial approach to prove lower bounds for weak saturation numbers that allows to establish worst-case tight (up to constant additive terms) general lower bounds as well as to get exact values of the weak saturation numbers for certain graph families. It is known (Alon, 1985) that, for every $F$, there exists $c_F$ such that $\mathrm{wsat}(n,F)=c_Fn(1+o(1))$. Our lower bounds imply that all values in the interval $\left[\frac{\delta}{2}-\frac{1}{\delta+1},\delta-1\right]$ with step size $\frac{1}{\delta+1}$ are achievable by $c_F$ (while any value outside this interval is not achievable).
\end{abstract}

\section{Introduction}
\label{sc:intro}

Given a graph $F$, an {\it $F$-bootstrap percolation process} is a sequence of graphs $H_0\subset H_1\subset\cdots\subset H_m$ such that, for   $i=1,\ldots,m$, $H_i$ is obtained from $H_{i-1}$ by adding an edge that belongs to a copy of $F$ in $H_i$. The $F$-bootstrap percolation process was introduced by Bollob\'{a}s \cite{bol} and can be seen as a special case of cellular automata. This notion is also related to $r$-neighborhood bootstrap percolation model having applications in physics; see, for example, \cite{Adler, Fontes,Morris}. Given $n\in\mathbb{N}$ and a graph $F$,  we call a graph $H$ on $[n]:=\{1,\ldots,n\}$  {\it weakly $F$-saturated}, if there exists an $F$-bootstrap percolation process $H=H_0\subset H_1\subset\cdots\subset H_m=K_n$. The  minimum number of edges in a weakly $F$-saturated graph is called the {\it weak $F$-saturation number} and is denoted by $\mathrm{wsat}(n, F)$. We will also denote by $\mathrm{wSAT}(n,F)$ the set of all weakly $F$-saturated graphs and by $\underline{\mathrm{wSAT}}(n,F)$ the set of those of them that have exactly $\mathrm{wsat}(n,F)$ edges. 

Throughout the paper we consider graphs $F$ without isolated vertices since, obviously, for $n\geq|V(F)|$, the value of $\mathrm{wsat}(n,F)$ coincides with the weak saturation number of the graph that obtained from $F$ be removing the isolated vertices. Everywhere below, we denote by $v$, $\ell$ and $\delta$ the number of vertices in $F$, the number of edges in $F$ and the minimum degree of $F$ respectively.

Note that a graph obtained from an $H\in\mathrm{wSAT}(v,F)$ by drawing from every vertex of $[n]\setminus [v]$ exactly $\delta-1$ edges to the vertices of $H$ belongs to $\mathrm{wSAT}(n,F)$. This observation immediately gives the upper bound
\begin{equation}
 \mathrm{wsat}(n,F)\leq \mathrm{wsat}(v,F)+(n-v)(\delta-1) \leq {v\choose 2}-1+(n-v)(\delta-1).
\label{wsat:upper_bound_general}
\end{equation}
This bound is sharp since $\mathrm{wsat}(n,K_v)={v\choose 2}-1+(n-v)(v-2)$~\cite{L77} (alternative proofs of this famous result have been obtained in \cite{Alon85,Frankl82,Kalai85,Kalai84}). Another example that shows optimality of the first inequality in (\ref{wsat:upper_bound_general}) is a star graph $F=K_{1,v-1}$ (see, e.g.,~\cite{Kal_Zhuk}).

The best known 	general lower bound for $\mathrm{wsat}(n,F)$ is due to Faudree, Gould and Jacobson~\cite{Faudree}. They showed that for graphs $F$ with minimum degree $\delta$
\begin{equation}
 \mathrm{wsat}(n,F)\geq\left(\frac{\delta}{2}-\frac{1}{\delta+1}\right)n
\label{wsat:lower_bound_general}
\end{equation}
for $n$ sufficiently large. This bound is true (even for all $n\geq v$). However, the argument in the form presented in the paper seems to be false. It is very short, and so, for convenience, we duplicate it in Appendix as well as explain the issue in the proof. 

In this paper, we prove (\ref{wsat:lower_bound_general}) by giving a new general lower bound expressed in the terms of an invariant of the graph $F$ that equals to the vector $(e_i,i\in[v])$, where $e_i$ is 1 less than the minimum number of edges adjacent to a vertex from a fixed $i$-set in $F$. The bound is presented in Section~\ref{sc:main}. Below, we state its more explicit corollary for certain graph families. Note that in the worst case our bound is slightly stronger than (\ref{wsat:lower_bound_general}). Also, it implies even better (and almost sharp) bounds for connected graphs $H$ with $\delta>1$. Note that, from the upper bound (\ref{wsat:upper_bound_general}), it immediately follows that $\mathrm{wsat}(n,F)=O(1)$ if $\delta=1$ (also, the lower bound (\ref{wsat:lower_bound_general}) becomes trivial). Therefore, we restrict ourselves with $\delta>1$.

Let us state the new lower bound. Recall that a graph is called {\it $k$-edge-connected}, if it remains connected whenever at most $k-1$ edges are removed.\\

\begin{theorem}
Let $\delta>1$. Then, for all $n\geq v$, 
$$
 \mathrm{wsat}(n,F)\geq\left(\frac{\delta}{2}-\frac{1}{\delta+1}\right)(n-v)+\ell-1.
$$
If $\delta$ is odd and $F$ is connected, then, for all $n\geq v$, 
$$
 \mathrm{wsat}(n,F)\geq\left(\frac{\delta}{2}-\frac{1}{2(\delta+2)}\right)(n-v)+\ell-1.
$$
If $\delta$ is even and $F$ is connected, or $\delta$ is arbitrary and $F$ is 2-edge-connected, then, for all $n\geq v$, 
$$
 \mathrm{wsat}(n,F)\geq\frac{\delta}{2}(n-v)+\ell-1.
$$
\label{th:new_lower_bound}
\end{theorem}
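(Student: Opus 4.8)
The plan is to deduce Theorem~\ref{th:new_lower_bound} from the general combinatorial lower bound of Section~\ref{sc:main}, which bounds $\mathrm{wsat}(n,F)$ from below by an explicit function of $n$ and of the invariant $(e_i:i\in[v])$ of $F$; granting that bound, the theorem reduces to estimating the numbers $e_i$ under the three hypotheses. So I shall (i) indicate how the general bound is obtained, and (ii) carry out the specialization. Throughout, fix a weakly $F$-saturated graph $H$ on $[n]$ with $\mathrm{wsat}(n,F)$ edges together with an $F$-bootstrap percolation process $H=H_0\subset\cdots\subset H_m=K_n$, with $f_i$ the edge added at step $i$ inside a copy $F_i\cong F$. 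Two elementary observations are the seeds. First, every vertex $w$ of $H$ has degree at least $\delta-1$: if $\deg_H(w)<n-1$, look at the first step $i$ that adds an edge at $w$; then $w$ lies in the copy $F_i$ as a vertex of degree at least $\delta$, all but one of whose incident edges already lie in $H_{i-1}$, and at that moment $w$ has exactly its $H$-neighbourhood because the earlier steps avoid $w$. Second, step $1$ shows $H\supseteq F_1-f_1\cong F-e$, so $|E(H)|\ge\ell-1$; the same localization applied to an arbitrary $i$-subset $S\subseteq[n]$ — tracking which edges of the percolation first touch $S$ — ties the edges of $H$ meeting $S$ to $e_{|S|}+1=\min\{|\partial_F T|:T\subseteq V(F),\,|T|=|S|\}$, the minimum number of edges of $F$ incident to a set of that size.

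The core of the general bound is a discharging argument run along the process (equivalently, along the reverse process that deletes $f_m,f_{m-1},\dots,f_1$ from $K_n$). One inserts the vertices of $[n]$ into a nested chain $S_v\subsetneq S_{v+1}\subsetneq\cdots\subsetneq S_n=[n]$, in a carefully chosen order tied to the percolation, and maintains the invariant that the number of edges of $H$ induced on $S_j$ is at least $e_v$ plus a running sum of increments. When a new vertex $w$ joins $S_{j-1}$, the copy of $F$ realized at the first step incident to $w$ forces at least $\delta-1$ backward edges at $w$, with $\delta-1$ (rather than $\ge\delta$) occurring exactly when $w$ plays the role of a minimum-degree vertex of $F$; the excess needed to reach the rate $\delta/2$ per vertex is recovered by pairing such a ``cheap'' step with a neighbouring ``expensive'' one. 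This pairing is what produces the factor $\delta/2$ and the sensitivity to the parity of $\delta$: a parity remainder forces an unavoidable loss, which works out to $\tfrac1{\delta+1}$ in general, improves to $\tfrac1{2(\delta+2)}$ when $F$ is connected (so that the extremal configurations for $(e_i)$ are forbidden), and disappears when $\delta$ is even or $F$ is $2$-edge-connected, in which case every cheap step can be charged.

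For the specialization it remains to estimate $(e_i)$. In general, for any $i$-set $T$ in $F$ the number of incident edges is $|E(F[T])|+|E(T,\overline T)|=\tfrac12\sum_{t\in T}\deg_F(t)+\tfrac12|E(T,\overline T)|\ge\tfrac{\delta i}{2}$, so $e_i+1\ge\lceil\delta i/2\rceil$, while $e_v=\ell-1$; feeding this lower envelope of $(e_i)$ into the general bound (which is monotone in the $e_i$) yields the first, unconditional inequality, the additive $\ell-1$ being exactly $e_v$. If $F$ is connected then no $T$ with $\emptyset\ne T\subsetneq V(F)$ is a union of components, so $|E(T,\overline T)|\ge1$ and the estimate improves to $e_i+1\ge\lceil(\delta i+1)/2\rceil$ for $i<v$: this raises $e_i$ by one at the even indices when $\delta$ is odd (yielding $\tfrac\delta2-\tfrac1{2(\delta+2)}$) and at all indices $i<v$ when $\delta$ is even (yielding $\tfrac\delta2$). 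If $F$ is $2$-edge-connected then $|E(T,\overline T)|\ge2$ for every such $T$, so $e_i+1\ge\tfrac{\delta i}{2}+1$ for all $i<v$ irrespective of parity, and the general bound again outputs $\tfrac\delta2$.

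I expect the main obstacle to be the discharging argument of part (i): choosing the order in which vertices enter the chain $(S_j)$ so that each cheap percolation step is matched with an expensive one — or shown to be rare with the right period — is delicate, because the copies $F_i$ witnessing different steps overlap arbitrarily, and one must simultaneously keep the additive constant at the sharp value $\ell-1=e_v$ rather than something weaker. The parity bookkeeping that isolates exactly the deficit $\tfrac1{\delta+1}$ (resp.\ $\tfrac1{2(\delta+2)}$), and the precise places where connectivity and $2$-edge-connectivity of $F$ are invoked, are the most error-prone parts of the plan.
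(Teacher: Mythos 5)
Your high-level plan is the same as the paper's: invoke the general subadditive lower bound (Theorem~\ref{th:main_lower_subadditive} of Section~\ref{sc:main}), set $\gamma=\min_{1\le i\le v-1} e_i/i$, and bound $\gamma$ from below using estimates on the $e_i$. The paper's Section~\ref{sc_sub:gamma} does exactly this. But your specialization step has a concrete gap: the lower envelope you feed into the general bound, namely $e_i+1\ge\lceil\delta i/2\rceil$ (with the $+\lambda$ boost from connectivity), is far too weak for small $i$. At $i=1$ it gives only $e_1\ge\lceil\delta/2\rceil-1$, so $e_1/1\ge\delta/2-1$ in the even case and $\delta/2-\tfrac12$ in the odd connected case, both of which are strictly below the targets $\delta/2-\tfrac{1}{\delta+1}$ and $\delta/2-\tfrac{1}{2(\delta+2)}$; and the problem persists at every $i\le\delta$ (e.g.\ for $\delta=3$, $F$ connected, at $i=3$ the envelope yields $e_3/3\ge 4/3$, whereas the claimed $\gamma\ge 3/2-1/10=7/5$). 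Since $\gamma$ is a minimum, a linear $g(i)=\gamma i$ with $\gamma$ equal to the target slope does not satisfy $g(i)\le e_i$ under your envelope, so Theorem~\ref{th:main_lower_subadditive} is inapplicable as you have set it up. The missing ingredient is the second estimate the paper uses for $i\le\delta$: from $2e[S]+e[S,\bar S]\ge\delta i$ one also gets $e[S]+e[S,\bar S]\ge\delta i-e[S]\ge\delta i-\binom{i}{2}$ (equation~\refeq{count\_all\_edges\_small\_k}), because $e[S]\le\binom{i}{2}$. This gives $e_i/i\ge\delta-\tfrac{i-1}{2}-\tfrac1i$ for $i\le\delta$, which is what makes the small-$i$ terms harmless; the $\lceil\delta i/2\rceil$-type bound only needs to take over for $i\ge\delta+1$, where the worst ratio (at $i=\delta+1$ or $i=\delta+2$) produces precisely the constants $\tfrac1{\delta+1}$ and $\tfrac1{2(\delta+2)}$. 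Without the $\binom{i}{2}$ refinement the argument simply does not reach the stated slopes. (A secondary remark: your sketch of how one proves the general bound itself — the ``discharging along a nested chain'' paired-steps argument — is not what the paper does; the paper proves Theorem~\ref{th:main_lower_subadditive} by considering vectors of disjoint vertex sets $(B_1,\dots,B_k)$ each inducing enough edges, ordering them lexicographically by size, and showing a maximal one must be the single block $[n]$. You flag your discharging argument as the error-prone part and do not carry it out; since you say you are deducing the theorem from Section~\ref{sc:main}, I treat this as deferred rather than as a second gap, but it is not a substitute for the paper's proof.)
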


The proof of this theorem is given in Section~\ref{sc_sub:gamma}. Then, in Section~\ref{sc:optimality}, we show that all our bounds are sharp up to a constant additive term. Note that, since $\ell\geq\frac{1}{2}v\delta$, the first bound implies $\mathrm{wsat}(n,F)\geq\left(\frac{\delta}{2}-\frac{1}{\delta+1}\right)n+\frac{v}{\delta+1}-1$ which is better than (\ref{wsat:lower_bound_general}). Indeed, $\frac{v}{\delta+1}-1\geq 0$, and the equality holds only when $F=K_v$ for which the upper bound is the answer.\\

It was observed by Alon~\cite{Alon85} that, for every graph $F$,  $\mathrm{wsat}(n,F)=c_F n(1+o(1))$ for some constant $c_F\geq 0$. Though the possible values of $c_F:=\lim_{n\to\infty}\frac{\mathrm{wsat}(n,F)}{n}$ are unknown, we have that, for all $F$ with $\delta\geq 2$, $\frac{\delta}{2}-\frac{1}{\delta+1}\leq c_F\leq\delta-1$, and both bounds are achievable. If $\delta=1$, then the only possible value of $c_F$ is 0. It is natural to ask, how are the values of $c_F$ distributed in this interval, if $\delta\geq 2$ is fixed? We have proved that they are not concentrated around the endpoints.\\

\begin{theorem}
For every integer $\delta\geq 2$, every $k\in\{0,1,\ldots,(\delta/2-1)(\delta+1)\}$, and every integer $N$ there exists a connected graph $F$ with the minimum degree $\delta$ and $|V(F)|\geq N$ such that
$$
 \mathrm{wsat}(n,F)=\left(\frac{\delta}{2}+\frac{k}{\delta+1}\right)n+O(1).
$$
\label{th:non_concentration}
\end{theorem}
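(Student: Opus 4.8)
The plan is to prove the theorem constructively. For each admissible triple $(\delta,k,N)$ I will exhibit one connected graph $F=F_{\delta,k,N}$ with $\delta(F)=\delta$ and $|V(F)|\ge N$, and then sandwich $\mathrm{wsat}(n,F)$ between matching bounds $\bigl(\tfrac\delta2+\tfrac k{\delta+1}\bigr)n+O(1)$. It is convenient to set $m:=\binom{\delta+1}2+k$, so that the target coefficient is $\tfrac m{\delta+1}$ and the hypothesis $0\le k\le(\delta/2-1)(\delta+1)$ reads $\binom{\delta+1}2\le m\le(\delta-1)(\delta+1)$, i.e.\ $\tfrac\delta2\le\tfrac m{\delta+1}\le\delta-1$; thus the constant we aim at always lies in the interval $[\tfrac\delta2-\tfrac1{\delta+1},\delta-1]$ of admissible values of $c_F$.

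First I would design $F$ so that it simultaneously admits a cheap percolating ground state and is rigid enough for the general lower bound of Section~\ref{sc:main} to be tight. The denominator $\delta+1$ in the answer points to working with blocks of $\delta+1$ vertices, each block spanning a clique $K_{\delta+1}$ --- contributing its $\binom{\delta+1}2$ internal edges --- together with exactly $k$ extra ``linking'' edges to a neighbouring block. Accordingly I would take $F$ to be built from a bounded unit, namely two cliques $K_{\delta+1}$ joined by a fixed bipartite gadget with $k$ edges, repeated in a ``necklace'' often enough that $|V(F)|\ge N$. The gadget is chosen to only raise degrees, so that $\delta(F)=\delta$, and to keep $F$ connected (for $k=0$ a single bridge suffices, and since $\delta$ is then even --- or can be made so by a minor variant --- the ``$2$-edge-connected or $\delta$ even'' case of Theorem~\ref{th:new_lower_bound} is available; for $k$ maximal the construction degenerates to complete-bipartite-type behaviour with coefficient $\delta-1$).

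For the upper bound I would put on $[n]$ a graph $H$ of the same shape: a bounded seed, then $n-O(1)$ further vertices split into blocks of size $\delta+1$, each block spanning a $K_{\delta+1}$ and sending $k$ edges back through a copy of the gadget, so that $|E(H)|=\tfrac m{\delta+1}n+O(1)$. The missing edges of $K_n$ would then be activated in a carefully chosen order: first fill in, block by block, the missing edges that, together with the two neighbouring cliques and the links already present, close a copy of $F$, which progressively fuses blocks into larger and larger cliques; once two large cliques are present, every remaining edge of $K_n$ lies in a copy of $F$. I expect this activation analysis to be the main obstacle: the gadget has to be engineered so that adding any single missing edge genuinely creates a copy of $F$ rather than a near-copy, and the activation order must be pinned down carefully, especially near the seed and at the junctions of the necklace.

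For the lower bound I would compute the invariant $(e_i)_{i\in[v]}$ of $F$, where $e_i$ is one less than the minimum, over $i$-subsets $S$ of $V(F)$, of the number of edges of $F$ incident to $S$. For a necklace graph the minimum is attained by an $i$-set sitting inside a single clique and avoiding gadget endpoints as long as possible, so the profile is governed by $\binom{\delta+1}2-\binom{\delta+1-i}2$ with a ``$+k$'' correction once an $i$-set is forced to meet the gadget; in particular $e_1=\delta-1$. Plugging this vector into the general lower bound of Section~\ref{sc:main}, the ``$+k$'' part of the profile is precisely what lifts the coefficient from $\tfrac\delta2$ to $\tfrac\delta2+\tfrac k{\delta+1}$, giving $\mathrm{wsat}(n,F)\ge\bigl(\tfrac\delta2+\tfrac k{\delta+1}\bigr)(n-v)+\ell-1$; together with the construction this pins $\mathrm{wsat}(n,F)$ down to $\bigl(\tfrac\delta2+\tfrac k{\delta+1}\bigr)n+O(1)$. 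A final routine check is that the repetition used to reach $|V(F)|\ge N$ disturbs neither the $(e_i)$-profile nor the ground state by more than $O(1)$.
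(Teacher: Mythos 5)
Your high-level framework matches the paper's: bound $\mathrm{wsat}(n,F)$ from below via Theorem~\ref{th:main_lower_subadditive} (i.e.\ via $\gamma=\min_i e_i/i$) and from above via the constructive Claim~\ref{cl:general_upper_bound_sparse}, after designing $F$ so that the two coefficients coincide. Both you and the paper build $F$ out of $K_{\delta+1}$ blocks, so the spirit is right. However, the specific construction is both different and, as written, contains a concrete quantitative error, and the two key verifications you explicitly defer are exactly where the work lies.

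The error is an off-by-one in the gadget size, and it is not cosmetic. You attach $k$ linking edges to each $K_{\delta+1}$ block. An end block $S$ then has $\binom{\delta+1}{2}+k$ edges incident to it, so by definition
$$
e_{\delta+1}\le\binom{\delta+1}{2}+k-1=m-1,
\qquad\text{hence}\qquad
\gamma\le\frac{m-1}{\delta+1}=\frac{\delta}{2}+\frac{k-1}{\delta+1}=\rho_{k-1}.
$$
So the lower bound of Theorem~\ref{th:main_lower_subadditive} cannot reach $\rho_k n+O(1)$ for your $F$; at best it gives $\rho_{k-1}n+O(1)$. Moreover, Claim~\ref{cl:general_upper_bound_sparse} applied to the same end block (note the ``$-1$'' in its numerator) improves your direct upper bound from $\rho_k n+O(1)$ to $\rho_{k-1}n+O(1)$, so your graph actually has $c_F=\rho_{k-1}$, not $\rho_k$. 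In particular it misses the top value $\delta-1$. The paper avoids this precisely by drawing $k+1$ edges between $A\cong K_{\delta+1}$ and the large clique $B$: one of these $k+1$ edges is ``absorbed'' by the $-1$ in both $e_{\delta+1}$ and the upper-bound formula, so the two coefficients land exactly on $\rho_k$. Your gadget should therefore carry $k+1$ edges, and $k=0$ (your single-bridge case) is then the degenerate instance of the general rule rather than a special patch.

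Beyond this, the construction differs structurally: the paper uses one $K_{\delta+1}$ attached to one large clique $K_m$, obtaining $|V(F)|\ge N$ simply by taking $m$ large, whereas you propose a long necklace of $K_{\delta+1}$ blocks. The paper's choice makes the two deferred verifications --- that $\mathrm{wsat}(v,F)=\ell-1$ and that the end block minimizes $\gamma(Q)=(\ell-|E(F\setminus Q)|-1)/|Q|$ over all $Q$ --- quite clean; in fact it further requires the $k+1$ linking edges to be distributed among the vertices of $A$ according to a specific nondecreasing integer sequence $s_1\le\cdots\le s_{\delta+1}$ with $s_1=s_2=0$ and consecutive gaps at most $1$, which is exactly what makes the activation order work and the minimization inequality~(\ref{eq:non_concentration_p_m_optimal}) monotone. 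You flag the activation analysis as ``the main obstacle'' and sketch the $\gamma$-minimization only heuristically; these are not routine for a necklace (internal blocks see $2k$ linking edges, subsets straddling a junction must be controlled, etc.), so as it stands the proposal is an outline with one correctable numerical bug and two genuine unproved steps, rather than a proof.
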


In other words, all values between $\frac{\delta}{2}-\frac{1}{\delta+1}$ and $\delta-1$ with step size $\frac{1}{\delta+1}$ are achievable by $c_F$. Theorem~\ref{th:non_concentration} is proven in Section~\ref{sc:non_concentration}.

In~\cite{Tuza} it was conjectured by Tuza that, for every graph $F$, $\mathrm{wsat}(n,F)=c_F n +O(1)$. The conjecture clearly holds for graphs with minimum degree $\delta=1$. The last part of Theorem~\ref{th:new_lower_bound} together with the upper bound (\ref{wsat:upper_bound_general}) imply that the conjecture is also true for all connected graphs with $\delta=2$. In the proof of Theorem~\ref{th:new_lower_bound} we introduce the parameter $\gamma$ that equals to the minimum value of $\gamma(S):=(|E(F)|-|E(F\setminus S)|-1)/|S|$ 
 over all proper $S\subset V(F)$. Note that $\gamma$ is exactly the constant in front of $n$ in our lower bounds. It is natural to ask, if, for any $F$, there exists an $S\subset V(F)$ such that $c_F=\gamma(S)$. This turns out to be true for all $F$ that we are familiar of. In particular, this is true not only for graphs with $\delta=1$, for connected graphs with $\delta=2$, and for cliques, but also for a (not necessarily disjoint) union of arbitrary number of cliques of sizes at least $\delta+1$ (see Remark~\ref{rk:union_of_cliques}). Moreover, it is not hard to prove that the conjecture of Tuza is true for all graphs with $c_F=\delta-1$ (in particular, for unions of cliques as above), and that if it is not true for some $F$, then the second-order term should approach $+\infty$ at least for some sequence of $n$. Indeed, this follows directly from the upper bound (\ref{wsat:upper_bound_general}) and the lower bound given in the claim below.
 
\begin{claim}
For every graph $F$, $\mathrm{wsat}(n,F)\geq c_F (n-v) +\ell-1$.
\label{cl:wsat_lower_bound_additive_constant}
\end{claim}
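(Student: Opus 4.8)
The plan is to reduce the claim to a ``superadditivity'' inequality for $\mathrm{wsat}(\cdot,F)$ and then apply a Fekete‑type averaging argument. Since for $\delta=1$ the bound is trivial (by (\ref{wsat:upper_bound_general}) one has $c_F=0$, while $\mathrm{wsat}(n,F)\geq\ell-1$ because the first activated edge completes a copy of $F$, so $\ell-1$ of its edges must already be present), I assume from now on that $\delta\geq2$. The heart of the proof is the inequality
\[
 \mathrm{wsat}(v+kp,F)\;\leq\;k\cdot\mathrm{wsat}(v+p,F)-(k-1)(\ell-1)\qquad\text{for all }k,p\geq1 .
\]

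To prove it, fix a minimum weakly $F$-saturated graph $H$ on $v+p$ vertices. Because $K_{v+p}$ minus any single edge is weakly $F$-saturated, $\mathrm{wsat}(v+p,F)<\binom{v+p}{2}$, so $H$ is not complete and its percolation process has at least one step; the first activated edge completes a copy of $F$ whose remaining $\ell-1$ edges already lie in $H$ and span exactly $v$ vertices (here $\delta\geq2$ ensures $F$ minus an edge has no isolated vertex). Call this vertex set $U$, and call $F_0\subseteq H[U]$ this copy of $F$-minus-an-edge. Now take $k$ disjoint copies $H^{(1)},\dots,H^{(k)}$ of $H$ and glue them by identifying, across all copies, the set $U$ together with its distinguished subgraph $F_0$. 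The resulting graph $G$ has $v+kp$ vertices; its edges inside $U$ are counted once while outside $U$ each copy keeps its own private edges, so $|E(G)|=|E(H[U])|+k\big(|E(H)|-|E(H[U])|\big)\leq k\cdot\mathrm{wsat}(v+p,F)-(k-1)(\ell-1)$ because $|E(H[U])|\geq|E(F_0)|=\ell-1$. Finally, $G$ is weakly $F$-saturated: run the percolation process of $H^{(1)}$ inside $V(H^{(1)})$ to make that set (hence $U$) a clique; then, since $H^{(i)}\cup K_U\supseteq H^{(i)}\cong H$ is weakly $F$-saturated, make each $V(H^{(i)})$ a clique; any remaining non-edge joins private vertices of two different copies and has all of $U$ among its common neighbours, so mapping one edge of $F$ onto it and the remaining $v-2$ vertices of $F$ into the clique $U$ creates a copy of $F$ and activates it. (At every step the relevant copy of $F$ persists in the ambient graph by monotonicity of weak saturation under edge addition.)

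Granting the displayed inequality, put $a_p:=\mathrm{wsat}(v+p,F)-(\ell-1)$ for $p\geq1$; the inequality reads $a_{kp}\leq k\,a_p$ for all $k,p\geq1$, hence $a_p/p\geq\lim_{k\to\infty}a_{kp}/(kp)=c_F$, the limit existing and equalling $c_F$ because $\mathrm{wsat}(m,F)=c_F m(1+o(1))$. Thus $\mathrm{wsat}(v+p,F)\geq c_F p+\ell-1$ for every $p\geq1$, and combined with the trivial bound $\mathrm{wsat}(v,F)\geq\ell-1$ this gives $\mathrm{wsat}(n,F)\geq c_F(n-v)+\ell-1$ for all $n\geq v$, as claimed.

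I expect the main obstacle to be securing the additive constant $\ell-1$ rather than $\ell-2$. Gluing two \emph{different} minimum weakly saturated graphs along a common $v$-set only forces an overlap isomorphic to $F$ minus \emph{two} edges — the two graphs may carry copies of $F$ minus \emph{different} edges, and these need not be alignable — which costs one extra edge; using $k$ copies of the \emph{same} graph glued along the \emph{same} near-copy $F_0$ is precisely what recovers the full $\ell-1$. The other delicate point is the weak-saturation verification for $G$, in particular checking that each sub-process of $H^{(i)}$ can indeed be executed inside the larger graph, which should be spelled out via the monotonicity remark above.
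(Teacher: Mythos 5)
Your proof is correct and reaches the same conclusion by a genuinely different construction. The paper proves the claim via the full subadditivity of $g_F(i):=\mathrm{wsat}(v+i,F)-(\ell-1)$, i.e.\ $g_F(i+j)\leq g_F(i)+g_F(j)$ (Claim~\ref{cl:wsat-subadditive}), and then applies the Fekete-type inequality $g_F(i)\geq i\cdot\lim g_F(x)/x=c_F\,i$. The key point in proving subadditivity is merging two \emph{possibly different} minimum weakly saturated graphs $A$ and $B$: the paper avoids your ``$\ell-2$ obstacle'' not by gluing the two graphs along an aligned near-copy of $F$, but by deleting $V(\tilde F)$ (the first created copy of $F$) from $B$ entirely and re-routing the edges from $W:=V(B)\setminus V(\tilde F)$ to an arbitrary $v$-set $U\subset V(A)$ via a bijection $\varphi:V(\tilde F)\to U$; this drops exactly $|E(B|_{V(\tilde F)})|\geq \ell-1$ edges. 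You instead prove only the multiplicative inequality $g_F(kp)\leq k\,g_F(p)$, achieved by gluing $k$ copies of the \emph{same} minimum graph $H$ along the shared $v$-set $U=V(\tilde F)$, which sidesteps the alignment issue you correctly flagged. Both yield $g_F(p)/p\geq c_F$. Your version is self-contained but strictly weaker as a lemma (no additivity for distinct summands); the paper's version is needed anyway for Theorem~\ref{th:bridges}, so it extracts the claim as a free corollary. A minor remark: the parenthetical ``$\delta\geq 2$ ensures $F$ minus an edge has no isolated vertex'' is unnecessary for your choice of $U$ --- you can simply take $U=V(\tilde F)$, which always has $v$ vertices regardless of whether the near-copy $F_0$ has isolated vertices --- but since you treat $\delta=1$ separately this does no harm.
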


\begin{proof}
The assertion of the claim follows immediately from 
\begin{itemize}
\item the fact that the function $g_F:\mathbb{Z}_{\geq 0}\to\mathbb{Z}_{\geq 0}$ defined as $g_F(i)=\mathrm{wsat}(v+i,F)-(\ell-1)$ is subadditive (see Claim~\ref{cl:wsat-subadditive} in Section~\ref{sc:proof-th:bridges}), and 
\item the obvious bound $g(i)\geq (\lim_{x\to\infty}g(x)/x)i$ that holds for any subadditive function $g:\mathbb{Z}_{\geq 0}\to\mathbb{R}$ (indeed, if the opposite inequality holds for some $i>0$, then $\frac{g(ji)}{ji}\leq\frac{g(i)}{i}<\lim_{x\to\infty}\frac{g(x)}{x}$ --- a contradiction).
\end{itemize}
\end{proof}




In~\cite{Faudree}, the authors also tried to go beyond $F=K_v$ and considered $F_{v,\delta}$ obtained from $K_v$ by removing $(v-1-\delta)$ edges adjacent to the same vertex. They conjectured that the upper bound is tight for these graphs, i.e. $\mathrm{wsat}(n,F_{v,\delta})={v-1\choose 2}+(n-v+1)(\delta-1)$ and prove the conjecture only for $v=5$ and $\delta=3$. We prove the conjecture for all $v$ and $\delta$ in Section~\ref{sc:faudree_conjecture}.  

In Section~\ref{sc:general_tight} (which is, together with the proofs of the main theorems from that section given in Sections~\ref{sc:proof-th:g_2}~and~\ref{sc:proof-th:bridges}, probably the most involved part of the paper), we demonstrate that our method is very powerful for certain graph families, so that it allows to find {\it exact} values of weak saturation numbers. We make a deeper analysis of our techniques and refine our upper bounds from Section~\ref{sc:connected} (that imply tightness of the bounds in Theorem~\ref{th:new_lower_bound}). In particular, these refined bounds imply exact values of the weak saturation numbers for the families of graphs $F$ that are obtained from two cliques by drawing several edges between them. A motivation for considering these graphs is that these are, probably, the most straightforward examples of graphs having weak saturation numbers rather close to the lower bounds. Moreover, we show that the bounds from Section~\ref{sc:general_tight} imply the exact value of the weak saturation number for all connected graphs $F$ that are not 4-edge-connected and satisfy $\mathrm{wsat}(v,F)=\ell-1$. These bounds also imply that $\mathrm{wsat}(n,K_4)=2n-3$ (though a combinatorial proof of this fact was known~\cite{bol}). The formulations of all these results appear in Section~\ref{sc:general_tight} but not in Introduction since we do not want to overload it with massive notations needed for that. 

\section{The lower bound}
\label{sc:main}

As was noted in~\cite{Alon85}, the existence of $\lim_{n\to\infty}\frac{\mathrm{wsat}(n,F)}{n}$ is immediate due to the fact that $\mathrm{wsat}(n,F)+(v-2)^2$ is subadditive. Indeed, it is easy to see that, if $[n]$ is divided into parts $[m]$ and $[n]\setminus[m]$ and two graphs isomorphic to some graphs from $\mathrm{wSAT}(m,F)$ and $\mathrm{wSAT}(n-m,F)$ are constructed on $[m]$ and $[n]\setminus[m]$ respectively, then it remains to draw all edges between fixed $(v-2)$-sets in $[m]$ and $[n]\setminus[m]$ to make the final graph weakly $F$-saturated, implying that $\mathrm{wsat}(n,F)\leq\mathrm{wsat}(m,F)+\mathrm{wsat}(n-m,F)+(v-2)^2$. 

Therefore, it is natural to construct a lower bound in the form $g(n)+O(1)$, where $g(n)$ is a subadditive function. We show that any such subadditive function is suitable unless, for some $i\in\{0,1,\ldots,v\}$, $g(i)$ exceeds the value of $e_i$ mentioned in Section~\ref{sc:intro}. More formally, for $i\in[v]$, set 
$$
e_i:=\min_{S\in{V(F)\choose i}}|E(F)\setminus E(F\setminus S)|-1,
$$
where hereinafter $F\setminus S$ is a subgraph of $F$ induced on $|V(F)|\setminus S$. Note that $e_1=\delta-1$. Set $e_0=0$.

\begin{theorem}
Let $g:\mathbb{Z}_{\geq 0}\to\mathbb{R}$ satisfy the following conditions:
\begin{itemize}
\item for every $i,j\in\mathbb{Z}_{\geq 0}$, $g(i+j)\leq g(i)+g(j)$;
\item for every $i\in\{0,1,\ldots,v-1\}$, $g(i)\leq e_i$.
\end{itemize}
Then, for every integer $n\geq v$,
$$
 \mathrm{wsat}(n,F)\geq g(n-v)+\ell-1.
$$
\label{th:main_lower_subadditive}
\end{theorem}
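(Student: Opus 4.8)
The plan is to prove the contrapositive-flavored statement directly: take any weakly $F$-saturated graph $H$ on $[n]$, and show $|E(H)| \geq g(n-v)+\ell-1$, by running the bootstrap process backwards and tracking a suitable "potential" built from $g$. Fix an $F$-bootstrap percolation sequence $H = H_0 \subset H_1 \subset \cdots \subset H_m = K_n$. For a set $W \subseteq [n]$ of vertices, let me think of the edges of $H$ inside $W$ together with the structure needed to "fill in" $W$: the key object will be, for the final step in which some edge lands inside a prescribed vertex set, a copy of $F$ that certifies it. I would try to show, by induction on $|W|$ running from $v$ up to $n$, that for every $W$ with $|W| \geq v$ the graph $H$ restricted to $W$ (or more precisely the portion of the process taking place inside $W$, suitably defined) has at least $g(|W|-v)+\ell-1$ edges; applying this with $W = [n]$ gives the theorem. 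The base case $|W|=v$ is the trivial bound $\mathrm{wsat}(v,F)\geq \ell-1$, which holds because activating any single edge already requires a copy of $F$, hence $\ell-1$ edges must be present to begin with.

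For the inductive step, I would pick the last edge $e_t = xy$ added during the process that lies inside $W$, and let $A$ be the vertex set of the copy of $F$ in $H_t$ witnessing $e_t$; note $A \subseteq W$ (by "last inside $W$" I should really choose $W$ and the process compatibly, or restrict attention to the induced sub-process on $W$, which is itself a valid $F$-bootstrap process on $W$). Split $W$ into $A$ and $W \setminus A$; one wants to say that the edges already present before adding $e_t$, restricted to $A$, number at least $|E(F)| - 1 = \ell-1$ minus a correction, while the remaining structure on $(W\setminus A)\cup(\text{a }(v{-}i)\text{-subset of }A)$ is handled by induction. The cleanest route is probably: delete a set $S\subseteq A$ with $|S|=i$ for a well-chosen $i$, observe that the number of edges of $H$ incident to $S$ but not counted elsewhere is at least $e_i + 1 = |E(F)\setminus E(F\setminus S)|$ in the worst case (this is where the definition of $e_i$ enters), recurse on $W \setminus S$ which has $|W|-i$ vertices, and combine via the subadditivity $g(|W|-v) \leq g(|W|-i-v) + g(i)$ together with the constraint $g(i) \leq e_i$ to absorb the correction term. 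In other words, each time we peel off an $i$-set sitting inside a witnessing copy of $F$, we charge $e_i$ "wasted" edges to the term $g(i)$ and the genuine $\ell-1$ edges of a full copy to the additive constant, and subadditivity lets these charges telescope without loss.

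The main obstacle, and the place to be careful, is the double-counting: when we peel off $S$ and recurse on $W\setminus S$, we must make sure the edges we attribute to $S$ are genuinely disjoint from those we attribute to the recursion, and that the recursion still sees a valid weakly-saturated-type configuration with the right number of "core" vertices. Concretely, the quantity I should induct on is not literally $|E(H[W])|$ but something like "the minimum number of edges in a graph on $W$ from which, using only copies of $F$, one can reach $K_W$" — i.e. $\mathrm{wsat}(|W|, F)$-type quantities relativized to the actual process — and the peeling argument must respect that the sub-process obtained by contracting or ignoring $S$ is still legitimate. Managing this bookkeeping, and in particular choosing at each stage the index $i$ for which $e_i$ (rather than just $e_1 = \delta-1$) gives the binding constraint, so that the final bound is exactly $g(n-v)+\ell-1$ rather than something weaker, is the technical heart of the proof; the two hypotheses on $g$ are tailored precisely so that this induction closes.
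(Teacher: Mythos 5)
Your proposal runs in the opposite direction from the paper's proof, and this reversal is precisely where it breaks down.

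The paper builds \emph{upward}: it maintains a tuple of pairwise disjoint vertex sets $(B_1,\ldots,B_k)$, each of size $\geq v$ and each inducing at least $f(|B_\kappa|):=g(|B_\kappa|-v)+\ell-1$ edges of $H$; it orders such tuples lexicographically by the sizes $|B_\kappa|$, takes a maximal tuple $\mathcal B^*$, and shows by contradiction that $\mathcal B^*=([n])$. Crucially, to extend $\mathcal B^*$, the paper looks at the \emph{first} edge added in the percolation process whose endpoints are not both in a single $B_\kappa$. Because it is the first such edge, every edge of the witnessing copy $\tilde F$ other than the new edge $\{u,v\}$ and those contained in some $B_\kappa$ is an edge of the original graph $H$; this is what makes the edge count go through when the meeting $B_\kappa$'s and $V(\tilde F)$ are merged into $\tilde B$.

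Your proposal instead peels \emph{downward} from $W=[n]$, using the \emph{last} edge added inside $W$ and its witness copy $A=V(\tilde F)$, and tries to charge a peeled $i$-set $S\subseteq A$ with $e_i$ edges of $H$. This is where the gap is: the edges of $\tilde F$ incident to $S$ are edges of $H_t$ for some large $t$, not of $H=H_0$. Almost all of them may have been added during the process, so the claim ``the number of edges of $H$ incident to $S$ is at least $e_i$'' is simply not available. (The inequality only becomes usable when the relevant copy of $F$ is the \emph{first} one to touch a given region, which is exactly what the paper arranges.) The secondary claim that ``the induced sub-process on $W$ is itself a valid $F$-bootstrap process on $W$'' is also false in general, since witnessing copies may use vertices outside $W$; and the candidate inductive statement ``$|E(H|_W)|\geq g(|W|-v)+\ell-1$ for all $W$ with $|W|\geq v$'' is false outright (take $W$ disjoint from most of $E(H)$, e.g.\ a set missing the hub of a weakly $K_3$-saturated star). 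You acknowledge that the invariant must be something else ``relativized to the actual process,'' but never say what; filling this in is not mere bookkeeping, it is the content of the proof. Finally, the paper's argument allows the current collection to consist of \emph{several disjoint} sets $B_\kappa$ that are later merged by a connecting edge, using subadditivity of $g$ across the merge; a single-set peeling scheme has no analogue of this and cannot track a disconnected initial graph $H$.
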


Since the function $g^*:\mathbb{Z}_{\geq 0}\to\mathbb{R}$, such that, for all $i$, $g^*(i)$ is defined as supremum of $g(i)$ over all $g$ satisfying the conditions of Theorem~\ref{th:main_lower_subadditive}, satisfies these conditions as well, we get that $g^*(n-v)+\ell-1$ is the best possible bound that follows from Theorem~\ref{th:main_lower_subadditive}. Let us show that it is not hard to define $g^*$ explicitly. Set 
$$
g^*(0)=0,\quad g^*(i)=\min_{s\in[i],\,1\leq i_1,\ldots,i_s\leq v-1:\,i_1+\ldots+i_s=i}(e_{i_1}+\ldots+e_{i_s}).
$$
Obviously the conditions in Theorem~\ref{th:main_lower_subadditive} hold for $g^*$. Moreover, if $g$ satisfies these conditions and, for some $i$, $g(i)>g^*(i)$, then find $s\in[i]$ and $i_1,\ldots,i_s$ such that $g^*(i)=e_{i_1}+\ldots+e_{i_s}$. Since $g$ is subadditive, we get 
$$
g(i)\leq g(i_1)+\ldots+g(i_s)\leq e_{i_1}+\ldots+e_{i_s}=g^*(i)
$$ 
--- a contradiction.\\

{\it Proof of Theorem~\ref{th:main_lower_subadditive}.} Let $n\geq v$. For $i\geq v$, set $f(i)=g(i-v)+\ell-1$. Let $H\in\mathrm{wSAT}(n,F)$. Let $\mathcal{O}_H$ be the set of all vectors $(B_1,\ldots,B_k)$ such that
\begin{itemize}
\item for every $\kappa\in[k]$, $B_{\kappa}\subset[n]$, $|B_{\kappa}|\geq v$,
\item for $\kappa_1\neq\kappa_2$, $B_{\kappa_1}\cap B_{\kappa_2}=\varnothing$,
\item for every $\kappa\in[k]$, $|E(H|_{B_{\kappa}})|\geq f(|B_{\kappa}|)$,
\item $|B_1|\geq|B_2|\geq\ldots\geq|B_k|$.
\end{itemize}
Note that $\mathcal{O}_H$ is non-empty. Indeed, consider the first edge added to $H$ in an $F$-bootstrap percolation process. This edge creates a copy $\tilde F$ of $F$. Then, clearly, $(V(\tilde F))\in\mathcal{O}_H$. Indeed $|E(H|_{V(\tilde F)})|\geq |E(\tilde F)|-1=|E(F)|-1$. Also note that if, for every $H$, $([n])\in\mathcal{O}_H$, then we get the statement of Theorem~\ref{th:main_lower_subadditive} immediately.

For $\mathcal{B}_{\alpha}=(B_1^{\alpha},\ldots,B_{k_{\alpha}}^{\alpha})\in\mathcal{O}_H$, $\alpha\in\{1,2\}$, set $\mathcal{B}_1\leq\mathcal{B}_2$, if $(|B_1^1|,\ldots,|B_{k_1}^1|)\leq (|B_1^1|,\ldots,|B_{k_2}^2|)$ in the lexicographical order. Let $\mathcal{B}^*$ be a maximal element of $\mathcal{O}_H$. The following lemma concludes the proof of Theorem~\ref{th:main_lower_subadditive}. $\Box$\\

\begin{lemma}
$\mathcal{B}^*=([n])$.
\label{lm:max_decomposition}
\end{lemma}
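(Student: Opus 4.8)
The plan is to argue by contradiction: suppose $\mathcal{B}^* = (B_1^*, \ldots, B_k^*)$ is a lexicographically maximal element of $\mathcal{O}_H$ but $\mathcal{B}^* \neq ([n])$. There are two scenarios to rule out. Either $B_1^* \cup \cdots \cup B_k^* = [n]$ but $k \geq 2$, or $V' := [n] \setminus (B_1^* \cup \cdots \cup B_k^*)$ is non-empty. In either case I want to produce an element of $\mathcal{O}_H$ that is lexicographically strictly larger, contradicting maximality. The natural move is to merge the first block $B_1^*$ with something — either with a later block, or by absorbing some of the leftover vertices — so that $|B_1^*|$ strictly increases (which immediately wins lexicographically, since the first coordinate grows), while the edge-count condition $|E(H|_{B})| \geq f(|B|)$ is preserved for the enlarged block.

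First I would handle the leftover vertices. Suppose $V' \neq \varnothing$; I claim we can move vertices from $V'$ into $B_1^*$ one at a time. When a vertex $x \in V'$ is added to a block $B$, the number of edges inside grows by $\deg_H(x, B)$, the number of $H$-neighbours of $x$ lying in $B$, while $f(|B|)$ grows by $g(|B|+1-v) - g(|B|-v)$. The key point is to choose $x$ so that $\deg_H(x, B_1^*)$ is at least this increment. Here is where the second hypothesis on $g$ — that $g(i) \le e_i$, together with subadditivity — should be used: since $H$ is weakly $F$-saturated, in the bootstrap process the edge from $x$ to the configuration is eventually added inside a copy of $F$, forcing $x$ to have at least $\delta$ neighbours at the relevant stage; more carefully, one tracks how many of those neighbours already lie in $B_1^*$ versus outside, and the $e_i$ bound controls exactly the deficit. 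If no single vertex of $V'$ works for $B_1^*$, one argues that the whole leftover set $V'$ together with $B_1^*$ still satisfies the edge bound by telescoping and subadditivity of $g$, or else a smaller merge already gives a contradiction. The upshot: we may assume $V' = \varnothing$.

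Then, with $V' = \varnothing$ and $k \ge 2$, I would merge $B_1^*$ and $B_2^*$. The new block $B := B_1^* \cup B_2^*$ has $|B| = |B_1^*| + |B_2^*|$, so $f(|B|) = g(|B_1^*| + |B_2^*| - 2v) + \ell - 1$, whereas the two old blocks contributed $|E(H|_{B_1^*})| + |E(H|_{B_2^*})| \ge g(|B_1^*|-v) + g(|B_2^*|-v) + 2(\ell-1)$ edges, and edges running between $B_1^*$ and $B_2^*$ only help. So it suffices that $g(|B_1^*|-v) + g(|B_2^*|-v) + (\ell-1) \ge g(|B_1^*|+|B_2^*|-2v)$, i.e. that $g(a) + g(b) + (\ell - 1) \ge g(a+b+v)$ where $a = |B_1^*| - v, b = |B_2^*| - v \ge 0$. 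This follows from subadditivity of $g$ once we know $g(v) \le \ell - 1$; and indeed $g(v) \le e_v \cdot$-type reasoning, or rather $g$ applied via subadditivity to $v = \underbrace{1 + \cdots + 1}$ gives $g(v) \le v \cdot e_1 = v(\delta-1)$ — which is not obviously $\le \ell - 1$, so this crude bound is insufficient and one must instead use $g(v-1) \le e_{v-1}$ together with $g(1) \le \delta - 1$ and the precise definition of $e_{v-1}$ (the minimum number of edges meeting a $(v-1)$-set, minus one, which for a graph with no isolated vertices is $\ell - $ something). Getting this numerology exactly right is, I expect, the main obstacle: the whole point of the two-part hypothesis on $g$ is that it is calibrated precisely so these merges never break the edge bound, and the proof must verify that calibration in each of the merge cases. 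Since $B$ is strictly larger than $B_1^*$, sorting the new tuple puts $|B|$ first and we get $\mathcal{B}^* < (B, B_3^*, \ldots, B_k^*) \in \mathcal{O}_H$, the desired contradiction. Hence $\mathcal{B}^* = ([n])$.
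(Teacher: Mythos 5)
There is a genuine gap, and it cannot be patched by numerology: the inequality $g(v)\leq \ell-1$ that your merge step requires is false in general. Take $F=K_3$, so $v=3$, $\ell=3$, $\delta=2$, $e_1=1$, $e_2=2$. Then any admissible $g$ must satisfy $g(3)\leq g(2)+g(1)\leq e_2+e_1=3$, but $g^*(3)=3>\ell-1=2$, and indeed one cannot do better. Concretely, if $B_1^*$ and $B_2^*$ are disjoint, each of size $m$ and each spanning exactly $f(m)=m-1$ edges, then $f(2m)=2m-1$ but the two blocks together contribute only $2m-2$ edges; the merge fails unless there is at least one $H$-edge between the blocks. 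So a ``blind'' merge of two blocks based only on their internal edge counts is not sound. The same issue undermines your $V'$-absorption step, which is presented as a sketch (``one argues\ldots by telescoping, or else a smaller merge already gives a contradiction'') rather than a proof: there is no reason a priori that a single vertex $x\in V'$ has enough $H$-neighbours in $B_1^*$ to pay for $g(|B_1^*|+1-v)-g(|B_1^*|-v)$, nor that the whole of $V'$ does.

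The idea you are missing — and the crux of the paper's proof — is to exploit the $F$-bootstrap percolation process itself to locate the edges needed to pay for the merge. The paper first handles the easy case where some $u\notin B_1$ is adjacent in $H$ to all of $B_1$ (then $B_1\cup\{u\}$ already works, using $e_1<|B_1|$). Otherwise, one considers the \emph{first} edge $\{u,v\}$ added in the process whose endpoints do not lie in a common block, and the copy $\tilde F$ of $F$ it completes. Because $\{u,v\}$ is the first such edge, every edge of $\tilde F\setminus\{u,v\}$ that does not lie inside some $B_\kappa$ must already be present in the initial graph $H$ — this supplies precisely the cross-block edges whose existence your argument cannot see. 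Merging $V(\tilde F)$ with all the blocks it meets then satisfies the edge bound via the $e_{v-|W_\kappa|}$ terms and subadditivity, using the hypothesis $g(i)\leq e_i$ for $i\leq v-1$. Your proposal correctly identifies the lexicographic-maximality contradiction template and correctly foresees that the ``numerology'' must be calibrated exactly, but it omits the percolation-process argument that makes the calibration work; without it, the merge inequalities are simply false.
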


{\it Proof of Lemma~\ref{lm:max_decomposition}.} Assume the contrary: let $\mathcal{B}^*=(B_1,\ldots,B_k)\neq([n])$. Take $v\notin B_1$. If $v$ is adjacent to all vertices from $B_1$ in $H$, then, due to the properties of $g$,
\begin{align*}
 |E(H|_{B_1\cup\{v\}})| &= |E(H|_{B_1})|+|B_1|\geq f(|B_1|)+v=g(|B_1|-v)+v+\ell-1\\ 
  & > g(|B_1|-v)+e_1+\ell-1\geq g(|B_1|-v)+g(1)+\ell-1\\
  &\geq g(|B_1|-v+1)+\ell-1=f(|B_1|+1)=f(|B_1\cup\{v\}|).
\end{align*} 
Therefore $(B_1\cup\{v\})\in\mathcal{O}_H$  --- a contradiction with the maximality of $\mathcal{B}^*$. 

Therefore, there exists an edge $\{u,v\}$ such that its endpoints do not belong to the same $B_i$. Let $\{u,v\}$ be the first such edge in an $F$-bootstrap percolation process that starts on $H$. Let this edge, when added, creates a copy $\tilde F$ of $F$. If $\tilde F$ does not meet any of $B_{\kappa}$, $\kappa\in[k]$, then $(B_1,\ldots,B_k,V(\tilde F))\in\mathcal{O}_H$, that contradicts with the maximality of $\mathcal{B}^*$. Let $\mathcal{I}$ be the non-empty set of all $\kappa\in[k]$ such that $B_{\kappa}\cap V(\tilde F)\neq\varnothing$. For every $\kappa\in\mathcal{I}$, set $W_{\kappa}=B_{\kappa}\cap V(\tilde F)$. Let $\tilde B=\bigcup_{\kappa\in\mathcal{I}}B_{\kappa}\cup V(\tilde F)$. Let $\kappa^*=\min\mathcal{I}$. Let us prove that $(B_1,\ldots,B_{\kappa^*-1},\tilde B)\in\mathcal{O}_H$ and derive to a contradiction with the maximality of $\mathcal{B}^*$. Since all the edges of $\tilde F\setminus\{u,v\}$ that do not lie inside any of $B_{\kappa}$ belong to the initial graph $H$, we get that there are 
$$
\left|E\left(H|_{V(\tilde F)}\right)\right|-
\sum_{\kappa\in\mathcal{I}}\left|E\left(H|_{W_{\kappa}}\right)\right|\geq
 \ell-1-\sum_{\kappa\in\mathcal{I}}|E(\tilde F|_{W_{\kappa}})|\geq
 \ell-1-\sum_{\kappa\in\mathcal{I}}(\ell-(e_{v-|W_{\kappa}|}+1))
$$ such edges. Therefore,
\begin{align*} 
|E(H|_{\tilde B})| & =
\sum_{\kappa\in\mathcal{I}}\left|E\left(H|_{B_{\kappa}}\right)\right|+
\left[\left|E\left(H|_{V(\tilde F)}\right)\right|-
\sum_{\kappa\in\mathcal{I}}\left|E\left(H|_{W_{\kappa}}\right)\right|\right]\\
 &\geq \sum_{\kappa\in\mathcal{I}}f(|B_{\kappa}|)+\left[\ell-1-\sum_{\kappa\in\mathcal{I}}(\ell-(e_{v-|W_{\kappa}|}+1))\right]\\
 &\geq\sum_{\kappa\in\mathcal{I}}(f(|B_{\kappa}|)-\ell+1+g(v-|W_{\kappa}|))+\ell-1\\
 & =\sum_{\kappa\in\mathcal{I}}(g(|B_{\kappa}|-v)+g(v-|W_{\kappa}|))+\ell-1\\
 &\geq g\left(\sum_{\kappa\in\mathcal{I}}(|B_{\kappa}|-|W_{\kappa}|)\right)+\ell-1=g(|\tilde B|-v)+\ell-1=f(|\tilde B|). \quad\quad \Box
\end{align*}

\section{Lower bounds for graph families}
\label{sc:connected}

In this section we prove Theorem~\ref{th:new_lower_bound} and, after that, show that the bounds are best possible, up to an additive constant.

\subsection{Proof of Theorem~\ref{th:new_lower_bound}}
\label{sc_sub:gamma}

Recall that $\delta>1.$

We will use Theorem~\ref{th:main_lower_subadditive}. Within the notations of Section~\ref{sc:main}, let $\gamma=\min_{1\leq i\leq v-1}\frac{e_i}{i}$. Set $g(n)=\gamma n$. Clearly, $g$ satisfies the conditions in Theorem~\ref{th:main_lower_subadditive}. Therefore, $\mathrm{wsat}(n,F)\geq \gamma(n-v)+\ell-1$. It remains to apply the claim stated below.\\

\begin{claim} The following lower bounds on $\gamma$ hold.
\begin{enumerate}
\item $\gamma\geq\frac{\delta}{2}-\frac{1}{\delta+1}$.
\item If $\delta$ is even and $F$ is connected, or $\delta$ is arbitrary and $F$ is 2-edge-connected, then $\gamma\geq\frac{\delta}{2}$.
\item If $\delta$ is odd and $F$ is connected, then $\gamma\geq\frac{\delta}{2}-\frac{1}{2(\delta+2)}$.
\end{enumerate}
\end{claim}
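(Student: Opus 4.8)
The plan is to bound $\gamma=\min_{1\le i\le v-1}\frac{e_i}{i}$ from below by analyzing, for each $i$, the quantity $e_i+1=\min_{S\in\binom{V(F)}{i}}|E(F)\setminus E(F\setminus S)|$, i.e.\ the minimum number of edges of $F$ that touch an $i$-subset $S$. Writing $d(S)$ for the number of edges with exactly one endpoint in $S$ and $e(S)$ for the number of edges inside $S$, we have $|E(F)\setminus E(F\setminus S)|=e(S)+d(S)$. The trivial lower bound $\sum_{v\in S}\deg_F(v)=2e(S)+d(S)\ge \delta i$ gives $e(S)+d(S)\ge \delta i-e(S)$, which is useless when $S$ is dense; instead I would combine it with $e(S)\le\binom{i}{2}$ to get $e(S)+d(S)\ge \delta i-\binom{i}{2}$, and separately use $e(S)+d(S)\ge\frac12(2e(S)+d(S))\ge\frac{\delta i}{2}$ (valid when $d(S)\ge 0$, always). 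The key tension is that we want a bound that beats $\frac{\delta i}{2}$ by enough; I expect the extremal case to be $i$ large and $S$ almost all of $V(F)$, or $i$ small.

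\textbf{Part (1).} For each fixed $i$, I would prove $e_i\ge\bigl(\frac{\delta}{2}-\frac{1}{\delta+1}\bigr)i$, equivalently $e(S)+d(S)-1\ge\bigl(\frac{\delta}{2}-\frac{1}{\delta+1}\bigr)i$. Using $2e(S)+d(S)\ge\delta i$ we get $e(S)+d(S)\ge\frac{\delta i+d(S)}{2}$; the loss relative to $\frac{\delta i}{2}$ only occurs when $d(S)$ is small. If $d(S)=0$ then $S$ is a union of components, so by minimum degree $\delta$ we have $i\ge\delta+1$ and $e(S)\ge\frac{\delta i}{2}$, whence $e(S)+d(S)-1\ge\frac{\delta i}{2}-1\ge\bigl(\frac{\delta}{2}-\frac{1}{\delta+1}\bigr)i$ precisely because $i\ge\delta+1$. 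If $d(S)\ge 1$ I would push the bookkeeping: the worst case is $d(S)=1$, giving $e(S)+d(S)\ge\frac{\delta i+1}{2}$, so $e(S)+d(S)-1\ge\frac{\delta i-1}{2}$, and this is $\ge\bigl(\frac{\delta}{2}-\frac{1}{\delta+1}\bigr)i$ iff $i\le\frac{\delta+1}{2}$; for larger $i$ with $d(S)=1$ one must instead argue that a single cut-edge forces $S$ to contain nearly a full component, again giving $i\ge\delta$ and letting the $e(S)\ge\frac{\delta i-1}{2}$ estimate absorb the deficit. I would organize this as a case split on the value of $d(S)$ versus the size $i$.

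\textbf{Parts (2) and (3).} Under connectivity, $d(S)\ge 1$ for every proper nonempty $S$, which already upgrades the generic bound to $e(S)+d(S)\ge\frac{\delta i+1}{2}$, hence $e_i\ge\frac{\delta i-1}{2}$. When $\delta$ is even this gives $e_i\ge\frac{\delta i}{2}$ as soon as $i\ge 1$ (since $\delta i$ is even, $\delta i-1$ is odd, and $e_i$ is an integer, so $e_i\ge\lceil\frac{\delta i-1}{2}\rceil=\frac{\delta i}{2}$) — wait, that needs $e(S)+d(S)\ge\frac{\delta i}{2}$ as an integer, which follows from $e(S)+d(S)\ge\frac{\delta i+d(S)}{2}$ and $d(S)\ge 0$, combined with integrality when $\delta i$ is even; when $\delta i$ is odd one uses $d(S)\ge 1$ to round up. For $2$-edge-connected $F$ we have $d(S)\ge 2$, so $e(S)+d(S)\ge\frac{\delta i+2}{2}=\frac{\delta i}{2}+1$, giving $e_i\ge\frac{\delta i}{2}$ directly. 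For $\delta$ odd and $F$ merely connected, $\delta i$ is odd exactly when $i$ is odd; for even $i$ we again get $e_i\ge\frac{\delta i}{2}$, and for odd $i$ we get $e_i\ge\frac{\delta i-1}{2}=\frac{\delta}{2}i-\frac12$, so $\frac{e_i}{i}\ge\frac{\delta}{2}-\frac{1}{2i}$, which is minimized at the largest admissible odd $i\le v-1$; since $v\ge\delta+1$ and $v$ has the right parity considerations, the worst odd $i$ is around $\delta+2$ — wait, I need $i$ as large as possible, but if $i=v-1$ is odd and large the bound $\frac{\delta}{2}-\frac{1}{2(v-1)}$ is \emph{better} than $\frac{\delta}{2}-\frac{1}{2(\delta+2)}$, so the claimed bound $\frac{\delta}{2}-\frac{1}{2(\delta+2)}$ must come from reconsidering: an odd $i<\delta+2$ with $d(S)$ odd forces $S$ small, and one shows no such $S$ can be too efficient; the honest extremal $S$ has $i=\delta+2$ roughly, so I would carefully locate it. The main obstacle throughout is the simultaneous parity-plus-size case analysis isolating exactly when $\frac{e_i}{i}$ dips below the target, and confirming the extremal configuration (a near-$(\delta+1)$-clique cut off by one or two edges) realizes each claimed constant.
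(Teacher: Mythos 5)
Your general strategy is the paper's: lower-bound $e(S)+d(S)$ via the degree-sum inequality $2e(S)+d(S)\ge\delta i$, supplemented either by $e(S)\le\binom{i}{2}$ (useful for small $i$) or by $d(S)\ge\lambda$ where $\lambda$ counts connectivity assumptions (useful for large $i$). However, Part (1) contains a concrete error that propagates. You assert that $\frac{\delta i-1}{2}\ge\bigl(\frac{\delta}{2}-\frac{1}{\delta+1}\bigr)i$ holds ``iff $i\le\frac{\delta+1}{2}$,'' but the inequality rearranges to $\frac{i}{\delta+1}\ge\frac12$, i.e.\ $i\ge\frac{\delta+1}{2}$ --- the opposite direction. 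So the troublesome regime is \emph{small} $i$, not large $i$, and your proposed patch (``for larger $i$ ... a single cut-edge forces $S$ to contain nearly a full component, again giving $i\ge\delta$'') is both aimed at the wrong case and not true as stated (a set with $d(S)=1$ need not nearly contain a component; the honest constraint from $e(S)\le\binom{i}{2}$ is $d(S)\ge i(\delta+1-i)$, which just says $d(S)=1$ is impossible for small $i$). The correct repair is the $e(S)\le\binom{i}{2}$ estimate that you already flag in your preamble: for $i\le\delta$ it gives $e_i\ge\delta i-\binom{i}{2}-1$, hence $\frac{e_i}{i}\ge\delta-\frac{i-1}{2}-\frac1i\ge\frac{\delta+1}{2}-\frac1\delta$, which exceeds all three targets. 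The paper sidesteps the need to case on $d(S)$ at all by splitting on $i\le\delta$ versus $i\ge\delta+1$ and then minimizing the two resulting one-variable expressions; your organization by $d(S)$ is salvageable but noticeably more delicate.

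Part (2) is essentially right (the integrality/parity rounding giving $e_i\ge\frac{\delta i}{2}$ when $\delta i+d(S)$ has the right parity is exactly the paper's mechanism). Part (3) is an honest but unfinished sketch: you correctly reverse your initial impulse and notice that $\frac{\delta}{2}-\frac{1}{2i}$ is minimized at the \emph{smallest} admissible odd $i$, and you correctly suspect the answer is $i=\delta+2$, but you never rule out odd $i\le\delta$. The missing step is again the $e(S)\le\binom{i}{2}$ bound for $i\le\delta$, combined with the observation that for $i\ge\delta+1$ and $\delta$ odd the smallest odd value is $\delta+2$ (since $\delta+1$ is even), giving $\frac{\delta}{2}-\frac{1}{2(\delta+2)}$. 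Filling those two gaps makes your argument a correct variant of the paper's proof.
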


\proof Let $i\in[v-1]$. Since $\delta$ is the minimum degree of $F$, for $S\in{V(F)\choose i}$, the summation of degrees of all vertices from $S$ is at least $\delta i$. On the other hand, it is exactly $2e[S]+e[S,V(F)\setminus S]$, where $e[S]=|E(F|_S)|$ is the number of edges inside $S$, and $e[S,V(F)\setminus S]$ is the number of edges between the vertices of $S$ and the vertices of $V(F)\setminus S$. Therefore, 
\begin{equation}
e[S]\geq \left\lceil\frac{\delta i-e[S,V(F)\setminus S]}{2}\right\rceil
\label{eq:count_edges_inside}
\end{equation}
and
\begin{equation}
 |E(F)\setminus E(F\setminus S)|=e[S]+e[S,V(F)\setminus S]\geq \delta i-e[S]\geq\delta i-{i\choose 2}.
\label{eq:count_all_edges_small_k}
\end{equation}
Let $\lambda=I(F\text{ is connected })+I(F\text{ is 2-edge-connected})$. Clearly, $e[S,V(F)\setminus S]\geq\lambda$.  From (\ref{eq:count_edges_inside}), we get
\begin{equation}
 |E(F)\setminus E(F\setminus S)|=e[S]+e[S,V(F)\setminus S]\geq\frac{\delta i+\lambda}{2}I(\delta i-\lambda\text{ is even})+\frac{\delta i+\lambda+1}{2}I(\delta i-\lambda\text{ is odd}).
\label{eq:count_all_edges_large_k}
\end{equation}

Combining (\ref{eq:count_all_edges_small_k}) with (\ref{eq:count_all_edges_large_k}), we get that
$$
 \gamma\geq\min\left\{\min_{1\leq i\leq\delta}\left[\delta-\frac{i-1}{2}-\frac{1}{i}\right],
 \min_{\delta+1\leq i\leq v-1}\frac{\delta i+\lambda-2+I(\delta i-\lambda\text{ is odd})}{2i}\right\}
$$
Therefore, for odd $\delta$, 
$$
 \gamma\geq
 \min\left\{\frac{\delta}{2}+\frac{1}{2}-\frac{1}{\delta},
 \frac{\delta}{2}+\left[-\frac{I(\lambda=0)}{\delta+1}
 -\frac{I(\lambda=1)}{2(\delta+2)}\right]\right\}=
 \frac{\delta}{2}-\frac{I(\lambda=0)}{\delta+1}
 -\frac{I(\lambda=1)}{2(\delta+2)}.
$$
For even $\delta$,
$$
 \gamma\geq
 \min\left\{\frac{\delta}{2}+\frac{1}{2}-\frac{1}{\delta},
 \frac{\delta}{2}-\frac{I(\lambda=0)}{\delta+1}\right\}=
 \frac{\delta}{2}-\frac{I(\lambda=0)}{\delta+1}.\quad\Box
$$

\subsection{Optimality}
\label{sc:optimality}

Here we show that the bounds in Theorem~\ref{th:new_lower_bound} are optimal up to an additive constant term. Everywhere below, as usual, $\delta\geq 2$ is the minimum degree of $F$; $v$ and $\ell$ are the number of vertices and the number of edges respectively.\\

\begin{theorem}
There exists $C>0$ such that, for every $x\in\mathbb{Z}_{\geq 0}$,
\begin{enumerate}
\item there exists a graph $F$ with at least $x$ vertices such that 
$$
\mathrm{wsat}(n,F)\leq\left(\frac{\delta}{2}-\frac{1}{\delta+1}\right)n+C;
$$
\item there exists a 2-edge-connected graph $F$  with at least $x$ vertices such that 
$$
\mathrm{wsat}(n,F)\leq\frac{\delta}{2}n+C;
$$
\item there exists a connected graph $F$ with odd $\delta$ and $v\geq x$ such that 
$$
\mathrm{wsat}(n,F)\leq\left(\frac{\delta}{2}-\frac{1}{2(\delta+2)}\right)n+C.
$$
\end{enumerate}
\label{th:new_lower_bounds_are_optimal}
\end{theorem}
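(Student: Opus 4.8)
\textit{Proof idea.} Each of the three items is an upper bound of the form $\mathrm{wsat}(n,F)\le\alpha n+C$ with $\alpha$ the coefficient appearing in the corresponding part of Theorem~\ref{th:new_lower_bound}; matched against that theorem (and against Claim~\ref{cl:wsat_lower_bound_additive_constant}), such a bound shows the lower bounds are tight up to an additive constant. The plan is therefore: for every target $x$, produce a graph $F$ with $|V(F)|\ge x$, minimum degree $\delta$, and the prescribed connectivity, and then, for every $n\ge|V(F)|$, exhibit one concrete graph $H_n$ on $[n]$ together with an explicit $F$-bootstrap percolation process $H_n=H_0\subset H_1\subset\cdots\subset K_n$, such that $|E(H_n)|\le\alpha n+C$.

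\textit{Which $F$.} Since Section~\ref{sc_sub:gamma} gives $\mathrm{wsat}(n,F)\ge\gamma(n-v)+\ell-1$, hence $c_F\ge\gamma$ with $\gamma=\min_{1\le i\le v-1}e_i/i$, the graph $F$ must be chosen \emph{extremal}: there must exist a proper $S\subset V(F)$ with $\gamma(S)=(|E(F)\setminus E(F\setminus S)|-1)/|S|=\alpha$; otherwise the desired upper bound is false for large $n$. Inspecting how this value is reached in Section~\ref{sc_sub:gamma} pins down the shape of $S$. For item~1 equality forces $e[S,V(F)\setminus S]=0$ and $F|_S=K_{\delta+1}$, so $F$ is disconnected with a $K_{\delta+1}$ component (and one enlarges $F$, e.g.\ by adding further components, to get $|V(F)|\ge x$). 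For item~2 equality forces $e[S,V(F)\setminus S]=2$ and $F|_S$ to be $\delta$-regular except at the two vertices meeting the outgoing edges; a convenient such $F$ is obtained from two large $\delta$-regular graphs by deleting one edge from each and reconnecting the four endpoints by two new edges, making $F$ a $2$-edge-connected $\delta$-regular graph. For item~3 (odd $\delta$) equality is attained at a set $S$ of size $\delta+2$ with a single outgoing edge and $F|_S=K_{\delta+2}$ minus a near-perfect matching, and $F$ is built by chaining several such gadgets. In each case, computing $\gamma$ for the chosen $F$ is a short calculation parallel to Section~\ref{sc_sub:gamma}, confirming $\gamma=\alpha$.

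\textit{The starting graph and the process.} For $H_n$ I would take roughly $n/|S|$ pairwise disjoint \emph{near-extremal blocks} --- copies of $F|_S$ with one carefully chosen edge removed --- plus a bounded ``seed'' on the $O(1)$ leftover vertices that already contains a copy of $F$ minus an edge; a direct count gives $|E(H_n)|=\alpha n+O(1)$. The percolation process runs in two stages. \emph{Stage 1 (local activation):} using the seed, and later the already-completed blocks, as a fixed helper, first complete each near-extremal block to a copy of $F|_S$, and then fill in, one at a time, all edges inside each block and between a block and the seed; each such addition creates a copy of $F$ because the relevant block together with the helper already contains $F$ minus exactly the edge being added. \emph{Stage 2 (global completion):} add the remaining edges --- those between two distinct blocks --- processing the blocks in a fixed order and maintaining the invariant that the union of the already-handled blocks and the seed induces a complete graph. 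With this invariant, any edge from a fresh block into the handled part, and then any edge between two fresh blocks, can be completed to a copy of $F$, the complete handled part serving as a reservoir of the required subgraphs of $F$.

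\textit{Where the difficulty is.} Stage~1 and the edge count are routine; the crux is Stage~2, i.e.\ choosing $F$ (equivalently, the gadget and the set $S$) and the order of the inter-block additions so that \emph{every} new edge lies in a full copy of $F$ rather than merely in a proper subgraph of it. The most delicate moment is the handover from Stage~1 to Stage~2: the first few inter-block edges are added before the handled region is complete, so the gadget must be arranged so that already then, completing one block and using just one available disjoint copy of ``the rest of $F$'' yields a copy of $F$. This is exactly why $F$ must be disconnected in item~1 --- a spare completed $K_{\delta+1}$ plays the role of the extra component --- and why the regular gadgets in items~2 and~3 are joined the way they are, so that copies of $F$ slide along and between blocks. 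Once the gadget, the set $S$, and the percolation order are made to mesh, verifying the two stages is bookkeeping.
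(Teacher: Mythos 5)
Your high-level plan (choose $F$ extremal for $\gamma$, then build $H_n$ out of block copies of the extremal set $S$) is the right idea, and it matches the paper's strategy in spirit, but the concrete construction of $H_n$ is wrong in a way that cannot be repaired without adding the ingredient the paper adds.

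\textbf{The fatal issue for items 2 and 3.} You propose $H_n$ to be a disjoint union of ``near-extremal blocks'' (copies of $F|_S$ minus an edge) together with a small seed, with \emph{no edges at all} from a fresh block to the rest of the graph. For connected $F$ this cannot percolate. Suppose the first new edge incident to a still-isolated block $B$ is $e=\{u,w\}$ with $w\in B$; let $\tilde F$ be the copy of $F$ it allegedly creates in $H\cup\{e\}$. Since $B$ has no other edges leaving it, every $\tilde F$-edge with one end in $B$ and one end outside is $e$ itself. So either $\tilde F$ meets $B$ only in $w$, forcing $\deg_{\tilde F}(w)=1<\delta$, or $\tilde F$ meets $B$ in $\ge 2$ vertices, making $e$ a cut-edge of $\tilde F\cong F$ --- impossible when $F$ is $2$-edge-connected (item 2), and for connected $F$ (item 3) it still forces $w$ to have at least $\delta-1\ge 2$ neighbours inside $B$, again making $e$ a cut-edge. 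So the percolation never even starts on the blocks. This is not a ``Stage~2 handover'' subtlety: Stage~1 as you describe it is already impossible. The paper avoids this by putting $e[P,V(F)\setminus P]-1$ edges from each new block to a distinguished $(v-|P|)$-subset of the already-built part \emph{in the starting graph}, via the inductive step of Claim~\ref{cl:general_upper_bound_sparse}. Those inter-block edges are exactly what allow the first missing edge inside a fresh block to complete a copy of $F$; and this costs nothing in the count because the quantity $\ell-|E(F|_{V(F)\setminus P})|-1$ already includes the boundary edges.

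\textbf{A second gap: $\mathrm{wsat}(v,F)=\ell-1$ is a hypothesis, not a free lunch.} The whole block argument (the paper's Claim~\ref{cl:general_upper_bound_sparse}, and your version of it) needs $\mathrm{wsat}(v,F)=\ell-1$ so that the locally-missing edges inside a block plus its boundary can be restored. This equality is not automatic: the lower bound $\mathrm{wsat}(v,F)\ge\ell-1$ always holds, but equality is a genuine constraint on $F$. The paper's three constructions are engineered precisely so that this holds, and an explicit percolation order is written down (for item 3 it takes several sentences). Your replacements in items 2 and 3 --- ``two large $\delta$-regular graphs joined by two edges'', and a ``chain of $K_{\delta+2}$-minus-near-matching gadgets'' --- are plausible extremal choices for $\gamma$, but you give no argument that $\mathrm{wsat}(v,F)=\ell-1$ for them, and it is not obvious that it does (for a generic $\delta$-regular graph it fails). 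Without that, Claim~\ref{cl:general_upper_bound_sparse} does not apply.

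In short: the paper does exactly what you call the crux --- it pins down a specific $F$ for which $\mathrm{wsat}(v,F)=\ell-1$ is verified and a set $P$ achieving $\gamma=\alpha$, and then the induction of Claim~\ref{cl:general_upper_bound_sparse} attaches each block together with its boundary edges to the already-built graph. The block construction you describe drops those boundary edges, and as a result it does not saturate for connected $F$; your item 1 (where $F$ is disconnected and a spare $K_{\delta+1}$ component can absorb a free block) is the one place the error is harmless.
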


\proof We will use the following claim (its proof is given after the proof of Theorem~\ref{th:new_lower_bounds_are_optimal}).\\

\begin{claim}
If $\mathrm{wsat}(v,F)=\ell-1$ and $P\subset V(F)$ is such that $|V(F)\setminus P|\geq\delta-1$, then
$$
 \mathrm{wsat}(n,F)\leq\frac{\ell-|E(F|_{V(F)\setminus P})|-1}{|P|}n+O(1).
$$
\label{cl:general_upper_bound_sparse}
\end{claim}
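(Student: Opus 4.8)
The plan is to build a weakly $F$-saturated graph on $[n]$ explicitly and then bound its number of edges. Write $Q = V(F)\setminus P$, so $|Q|\geq\delta-1$ and the claim asserts an upper bound whose leading coefficient is $(\ell - |E(F|_Q)| - 1)/|P|$. The natural construction is to take many disjoint copies of the "core" structure glued along a common set that plays the role of $Q$. Concretely, fix on $[n]$ a set $R$ of $|Q|$ vertices, place a copy of $F|_Q$ on $R$, and then partition the remaining $n - |Q|$ vertices into blocks of size $|P|$; on each block together with $R$ put a copy of the edge set $E(F)\setminus E(F|_Q)$ (i.e.\ a copy of $F$ in which the $Q$-part is identified with $R$), minus one edge — this is possible because $\mathrm{wsat}(v,F)=\ell-1$ tells us a single copy of $F$ minus an edge is already weakly $F$-saturated on $v$ vertices, so the same "one edge short" bookkeeping will work blockwise. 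Counting edges: $|E(F|_Q)|$ edges on $R$, plus roughly $(n/|P|)$ blocks each contributing $\ell - |E(F|_Q)| - 1$ edges (the "$-1$" because each block is a saturated, not complete, configuration), giving exactly the claimed leading term plus $O(1)$, where the $O(1)$ absorbs $R$'s edges and the $n \bmod |P|$ leftover vertices (which can be attached cheaply since $\delta - 1$ edges per leftover vertex suffice, cf.\ the discussion around~\eqref{wsat:upper_bound_general}).

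The substance of the argument is verifying that this graph is weakly $F$-saturated, i.e.\ exhibiting an $F$-bootstrap percolation process completing it to $K_n$. I would proceed block by block. First, using $\mathrm{wsat}(v,F)=\ell-1$ applied inside $R\cup(\text{first block})$ — which has exactly $v$ vertices arranged as $F$ minus an edge — percolate that $v$-set to a clique. Now $R\cup(\text{first block})$ is complete. For each subsequent block $B$, the set $R\cup B$ again carries a copy of $F$ minus one edge, so it percolates to a clique on its $v$ vertices; crucially this only ever adds edges, so earlier blocks stay complete. After all blocks are processed, every $R\cup B$ is a clique, and in particular $R$ is complete and each block is complete. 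It remains to add edges between different blocks and between blocks and the leftover vertices. For this I would use the standard trick (as in the subadditivity remark in Section~\ref{sc:main}): once two cliques each of size $\geq v$ share enough vertices, or once one has a clique on $\geq v-1$ vertices plus an external vertex joined to $\geq\delta - 1$ of them — wait, more carefully: given a clique $K$ on $\geq v - 1$ vertices and any other vertex $w$ adjacent to at least $\delta-1$ vertices of $K$, every missing edge from $w$ into $K$ closes in a copy of $F$ (pick $\delta - 1$ old neighbours plus the new one to host a minimum-degree vertex of $F$, and fill the rest of $F$ inside the clique). Applying this repeatedly lets me absorb each block and each leftover vertex into one growing clique, finishing at $K_n$.

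I expect the main obstacle to be the bootstrap step that merges two large cliques (or a clique and a fresh block) when they do not already share $v$ vertices: one must be careful that at the moment an edge $\{x,y\}$ is added, there genuinely is a copy of $F$ using it, which requires locating a vertex of degree $\delta$ in $F$ to put at $x$ (or $y$) with all its $F$-neighbours among already-present edges. The cleanest way to guarantee this is to always merge along $R$ itself: since every block-clique contains $R$ with $|R| = |Q| \geq \delta - 1$, and after the first phase $R$ is complete and sits inside a clique on $v$ vertices, I can grow that $v$-clique one vertex at a time, each new vertex coming with its $\geq \delta - 1$ edges to $R$ already in place, so the degree-$\delta$ vertex of $F$ goes on the new vertex. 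A secondary, purely bookkeeping obstacle is making the $+O(1)$ term genuinely independent of $n$ (and, for Theorem~\ref{th:new_lower_bounds_are_optimal}, of which member $F$ of the family is chosen); this is handled by noting the only $n$-dependent contribution is the clean multiple of the number of blocks, and all other contributions (edges in $R$, the at most $|P| - 1$ leftover vertices with $\delta - 1$ edges each) are bounded by a constant depending only on $F$ — and in the application one arranges the family so this constant is uniform.
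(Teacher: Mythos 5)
Your proposal is correct and follows essentially the same route as the paper: the paper phrases the construction inductively (take a minimum weakly $F$-saturated graph on $v+k(m-1)$ vertices, attach a fresh $k$-set $\tilde P$ carrying $F|_P$ plus the $P$-to-$(V(F)\setminus P)$ bipartite edges minus one, percolate the old part to a clique, then saturate $\tilde P\cup K$ using $\mathrm{wsat}(v,F)=\ell-1$, then absorb $\tilde P$ using $|K|=v-k\ge\delta-1$), which is exactly your ``core $R$ plus blocks'' construction unrolled, with the core $R$ playing the role of the $(v-|P|)$-set $K$ at every step.
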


We construct the desired graph sequences for each item of Theorem~\ref{th:new_lower_bounds_are_optimal} separately.

\begin{enumerate}
\item Due to Claim~\ref{cl:general_upper_bound_sparse}, it is sufficient to find a sequence of graphs $F_m$ with $v_m$ vertices, $\ell_m$ edges and minimum degree $\delta\geq 2$ such that $v_m\to\infty$ as $m\to\infty$, each $F_m$ contains $P_m\subset V(F_m)$ such that $|V(F_m)\setminus P_m|\geq\delta-1$ and 
$$
\frac{\ell_m-|E(F|_{V(F_m)\setminus P_m})|-1}{|P_m|}=\frac{\delta}{2}-\frac{1}{\delta+1}.
$$
Consider a disjoint union of $m\geq 3$ cliques $K_{\delta+1}$. $F_m$ is obtained by drawing one edge between one pair of these cliques. Clearly, $F_m$ is the desired sequence with $P_m$ chosen to be one of the disjoint cliques (that does not have an edge joining it with another clique). Indeed, $|V(F_m)\setminus P_m|\geq 2(\delta+1)$, $\ell_m=m\frac{\delta(\delta+1)}{2}+1$, $|E(F|_{V(F_m)\setminus P_m})|=(m-1)\frac{\delta(\delta+1)}{2}+1$.

\item Here we also apply Claim~\ref{cl:general_upper_bound_sparse}: we should find a sequence of 2-edge connected graphs $F_m$ and $P_m\subset V(F_m)$ as above such that
$$
\frac{\ell_m-|E(F|_{V(F_m)\setminus P_m})|-1}{|P_m|}=\frac{\delta}{2}.
$$
Consider a disjoint union of two $(\delta+1)$-cliques $A_1,A_2$ and an $m$-clique $B$, $m\geq\delta+5$. $F_m$ is obtained by 
drawing two disjoint edges $\{u_1,w_1\}$ and $\{u_2,w_2\}$ between $A_1$ and $B$ ($u_1,u_2$ are vertices of $A_1$), two disjoint edges between $A_2$ and $B$ that also do not meet vertices $w_1,w_2$, and deleting the edge $\{u_1,u_2\}$. It is easy to verify that $\mathrm{wsat}(v_m=m+2(\delta+1),F_m)=\ell_m-1$, where $\ell_m=\frac{m(m-1)}{2}+\delta(\delta+1)+3$. The desired set $P_m$ is the set of vertices of $A_1$. Indeed, $|V(F_m)\setminus P_m|=m+\delta+1$, $|E(F|_{V(F_m)\setminus P_m})|=\frac{m(m-1)}{2}+\frac{\delta(\delta+1)}{2}+2$.

\item In the same way as above, we should find a sequence of connected graphs $F_m$ with odd $\delta\geq 3$ and $P_m\subset V(F_m)$ such that
$$
\frac{\ell_m-|E(F|_{V(F_m)\setminus P_m})|-1}{|P_m|}=\frac{\delta}{2}-\frac{1}{2(\delta+2)}.
$$
Consider a disjoint union of two $(\delta+1)$-cliques $A_1,A_2$ and an $m$-clique $B$, $m\geq\delta+3$, with distinguished vertices $w_1\neq w_2$. $F_m$ is obtained by 
\begin{itemize}
\item for every $j\in\{1,2\}$, choosing a perfect matching in $A_j$ arbitrarily and deleting it from $A_j$,
\item for every $j\in\{1,2\}$, selecting a vertex $u_j\in V(A_j)$ and drawing an edge between $u_j$ and $w_j$,
\item adding two vertices $x_1,x_2$ and drawing edges between $x_2$ and all vertices of $V(A_2)$, between $x_1$ and all vertices from $V(A_1)\setminus\{u_1\}$. 
\end{itemize}
Note that $\mathrm{wsat}(v_m=m+2(\delta+1)+2,F_m)=\ell_m-1$, where $\ell_m=\frac{m(m-1)}{2}+(\delta+1)^2+1$. Indeed, the missing edges can be added one by one, say, in the following order: start with missing $\{x_1,u_1\}$, then draw all edges between $V(A_1)\cup V(A_2)$ and $V(B)\setminus\{w_1,w_2\}$, proceed with the missing matchings in $A_1,A_2$, then draw edges between $\{x_1,x_2\}$ and $V(B)$, draw all the remaining edges between $V(A_1)\cup V(A_2)$ and $V(B)$, and, finally, restore the missing edges between $V(A_1)\cup\{x_1\}$ and $V(A_2)\cup\{x_2\}$. It remains to set $P_m=V(A_1)\cup\{x_1\}$. Indeed, $|V(F_m)\setminus P_m|=m+\delta+2$, $|E(F|_{V(F_m)\setminus P_m})|=\frac{m(m-1)}{2}+\frac{(\delta+1)^2}{2}+1$. $\Box$\\

\end{enumerate}

{\it Proof of Claim~\ref{cl:general_upper_bound_sparse}}. Denote $k=|P|$. We are going to show by induction that, for every $m\in\mathbb{Z}_{\geq 0}$, 
\begin{equation}
\mathrm{wsat}(v+km,F)\leq (\ell-|E(F|_{V(F)\setminus P})|-1)m+\ell-1.
\label{eq:general_upper_bound_sparse_proof}
\end{equation}
Note that (\ref{eq:general_upper_bound_sparse_proof}) immediately implies the statement of Claim~\ref{cl:general_upper_bound_sparse} since, if $r$ is a reminder of the division of $n-v$ by $k$, then $(\delta-1)r=O(1)$ edges is sufficient to restore all edges of $K_n$ from $K_{v+km}$.

The base of induction is straightforward. Let $m$ be a positive integer and assume that (\ref{eq:general_upper_bound_sparse_proof}) is proven for $m-1$. Let $H\in\underline{\mathrm{wSAT}}(v+k(m-1),F)$. Let $G$ be obtained from $H$ by adding a copy of $F|_P$ to $H$ (on a set of $k$ vertices $\tilde P$ disjoint with $V(H)$), distinguishing a subset $K\subset V(H)$ of $v-k$ vertices, drawing edges between $K$ and $\tilde P$ in the same way as they appear between $V(F)\setminus P$ and $P$, and deleting one of these edges. All missing edges between $K$ and $\tilde P$ in $G$ can be restored since $\mathrm{wsat}(v,F)=\ell-1$. After that, every vertex from $\tilde P$ has at least $|K|\geq\delta-1$ neighbors in $V(H)$. Therefore, all edges between $\tilde P$ and $V(H)\setminus K$ can be restored as well. This finishes the proof.  $\Box$

\section{Non-concentration}
\label{sc:non_concentration}

In this section, we prove Theorem~\ref{th:non_concentration}.

Let $\delta\geq 2$ and $k\leq\frac{(\delta-2)(\delta+1)}{2}$ be a non-negative integer. The proof strategy is similar to the proof of Theorem~\ref{th:new_lower_bounds_are_optimal}. We are going to construct a sequence of graphs $F$ such that $\gamma$ defined in Section~\ref{sc_sub:gamma} coincides with the minimum value of $\frac{\ell-|E(F|_{V(F)\setminus P})|-1}{|P|}$ over all $P$ satisfying the requirements in Claim~\ref{cl:general_upper_bound_sparse}, and this minimum value equals to $\frac{\delta}{2}+\frac{k}{\delta+1}=:\rho_k$. We start with constructing the desired graph sequence.

Fix the sequence $s_1\leq s_2\leq\ldots\leq s_{\delta+1}$ (it clearly exists and it is unique) such that $s_1=s_2=0$, for every $i\in[\delta]$, $s_{i+1}-s_i\leq 1$, $\sum_{i=1}^{\delta+1}s_i=k+1$. Let $m\geq\max\{2(k+1),2\delta+1\}$	 be an integer. Consider disjoint cliques $A\cong K_{\delta+1}$ and $B\cong K_m$. Let $[\delta+1]$ be the vertex set of $A$. Let $M_1,\ldots,M_{\delta+1}$ be disjoint subsets of $V(B)$ satisfying $|M_i|=s_i$, $i\in[\delta+1]$. The graph $F_m$ is obtained from $A$ and $B$ by drawing edges between the vertex $i$ and every vertex from $M_i$ for all $i\in[\delta+1]$. Let $P_m:=V(A)$. The graph $F_m$ has $v_m=m+\delta+1$ vertices and $\ell_m={m\choose 2}+{\delta+1\choose 2}+k+1$ edges. Note that $|V(F_m)\setminus P_m|=m\geq 2\delta+1$ and 
$$
\frac{\ell_m-|E(F_m|_{V(F_m)\setminus P_m})|-1}{|P_m|}=\frac{{\delta+1\choose 2}+k}{\delta+1}=\frac{\delta}{2}+\frac{k}{\delta+1}=\rho_k.
$$
Due to the definition of $\gamma$ and its properties described in Section~\ref{sc_sub:gamma}, by Theorem~\ref{th:main_lower_subadditive} and Claim~\ref{cl:general_upper_bound_sparse}, it remains to prove the following:
\begin{enumerate}
\item $\mathrm{wsat}(v_m,F_m)=\ell_m-1$;
\item for every $Q\subset V(F_m)$, we have 
\begin{equation}
\frac{\ell_m-|E(F_m|_{V(F_m)\setminus P_m})|-1}{|P_m|}\leq\frac{\ell_m-|E(F_m|_{V(F_m)\setminus Q})|-1}{|Q|}.
\label{eq:P_min}
\end{equation}
\end{enumerate}

The first condition holds since the missing edges of $K_{v_m}$ can be added one by one, say, in the following order: first join vertices $1,2$ (note that this vertices do not have neighbors in $B$ initially) with all the vertices of $V(B)\setminus(\cup_i M_i)$, then join all the other vertices of $A$ with all the vertices of $V(B)\setminus(\cup_i M_i)$, and, finally, restore all the rest.

Now, fix $Q\subset V(F_m)$, $|Q|=x$. Let us prove the inequality (\ref{eq:P_min}). 

Assume that $Q\subset V(A)$. We have
\begin{equation}
\frac{\ell_m-|E(F_m|_{V(F_m)\setminus Q})|-1}{|Q|}=
\frac{x(\delta-\frac{x-1}{2})+\sum_{i\in Q}s_i-1}{x}\geq
\frac{x(\delta-\frac{x-1}{2})+\sum_{i=1}^x s_i-1}{x}.
\label{eq:non_concentration_p_m_optimal}
\end{equation}
while
$$
\frac{\ell_m-|E(F_m|_{V(F_m)\setminus P_m})|-1}{|P_m|}=
\frac{(\delta+1)(\delta-\frac{\delta}{2})+\sum_{i=1}^{\delta+1} s_i-1}{\delta+1}.
$$
It remains to notice that the right hand part of (\ref{eq:non_concentration_p_m_optimal}) (we denote it by $\xi(x)$) decreases in $x$. Indeed,
$$
 [\xi(x+1)-\xi(x)]x(x+1)=-\frac{x(x+1)}{2}+1-\sum_{i=1}^x s_i+xs_{x+1}=
 \sum_{i=1}^x(s_{x+1}-s_i)+1-\frac{x(x+1)}{2}\leq 0.
$$
If $Q\subset V(B)$, then 
$$
\frac{\ell_m-|E(F_m|_{V(F_m)\setminus Q})|-1}{|Q|}\geq\frac{x\frac{m-1}{2}-1}{x}\geq\frac{m-1}{2}-1\geq\delta-1\geq\rho_k.
$$
Finally, if $Q$ has non-empty intersections both with $A$ and $B$, then, letting $W=Q\cap V(A)$, we get
\begin{align*}
\frac{\ell_m-|E(F_m|_{V(F_m)\setminus Q})|-1}{|Q|} & \geq
\frac{\ell_m-|E(F_m|_{V(F_m)\setminus W})|-1+(|Q|-|W|)\frac{m-1}{2}}{|Q|}\\
& \geq
\frac{|W|\rho_k+(|Q|-|W|)\frac{m-1}{2}}{|Q|}>\frac{|W|\rho_k+(|Q|-|W|)\rho_k}{|Q|}=\rho_k.
\end{align*}

\section{Tight results for other graph families}
\label{sc:other}

\subsection{The conjecture of Faudree, Gould and Jacobson}
\label{sc:faudree_conjecture}

Here we prove the conjecture of Faudree, Gould and Jacobson~\cite{Faudree}. Let $F_{v,\delta}$ be obtained from $K_v$ by removing $v-1-\delta$ edges adjacent to the same vertex. 
\begin{theorem}
For all integer $v\geq 3$ and $\delta\in[v-2]$, 
$$
\mathrm{wsat}(n,F_{v,\delta})={v-1\choose 2}+(n-v+1)(\delta-1).
$$
\label{th:F_pq}
\end{theorem}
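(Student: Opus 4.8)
The plan is to apply the two general tools developed above: Theorem~\ref{th:new_lower_bound} together with Claim~\ref{cl:wsat_lower_bound_additive_constant} for the lower bound, and the upper bound \refeq{wsat:upper_bound_general} for the matching upper bound. The graph $F_{v,\delta}$ has $\ell={v\choose 2}-(v-1-\delta)={v-1\choose 2}+\delta$ edges and minimum degree exactly $\delta$ (the vertex from which edges were removed has degree $v-1-(v-1-\delta)=\delta$, while every other vertex has degree at least $v-2\geq\delta$). Since $F_{v,\delta}$ is obtained from $K_v$ by deleting edges at a single vertex, it is $2$-edge-connected as soon as $\delta\geq 2$, and it is connected when $\delta=1$; so the relevant case of Theorem~\ref{th:new_lower_bound} (or rather, the Claim~\ref{cl:wsat_lower_bound_additive_constant} mechanism) will be used to pin down the constant $c_{F_{v,\delta}}$.

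First I would compute $c_F:=c_{F_{v,\delta}}$. The upper bound \refeq{wsat:upper_bound_general} gives $\mathrm{wsat}(n,F_{v,\delta})\leq\mathrm{wsat}(v,F_{v,\delta})+(n-v)(\delta-1)\leq{v\choose 2}-1+(n-v)(\delta-1)$, so $c_F\leq\delta-1$. For the reverse inequality, by Claim~\ref{cl:wsat_lower_bound_additive_constant} it suffices to show $c_F\geq\delta-1$, i.e. that $\gamma(S)\geq\delta-1$ fails to be small — more directly, I would establish $\mathrm{wsat}(v,F_{v,\delta})={v\choose 2}-1=\ell-1+(v-1-\delta)$ and then note that the first inequality in \refeq{wsat:upper_bound_general} is in fact an equality here. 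Concretely: to prove $\mathrm{wsat}(n,F_{v,\delta})\geq{v-1\choose 2}+(n-v+1)(\delta-1)$ I would exhibit a subadditive function $g$ with $g(i)\le e_i$ for $i\le v-1$ and $g(n-v)+\ell-1$ equal to the claimed bound. Here the key point is to pin down $e_i=\min_{|S|=i}|E(F_{v,\delta})\setminus E(F_{v,\delta}\setminus S)|-1$: for the single low-degree vertex $z$, removing an $i$-set $S$ containing $z$ deletes the $\delta$ edges at $z$ plus the edges among $S\setminus\{z\}$ plus edges from $S\setminus\{z\}$ to the rest, and a short degree count shows the minimum is attained by taking $S$ to be $z$ together with $i-1$ of its non-neighbors — this should give $e_i=\delta-1+(i-1)(\delta-1)=i(\delta-1)$ for $i$ in the relevant range (one must check the vertices of $F_{v,\delta}$ all have degree $\ge\delta$ with equality only at $z$, and that deleting neighbors of $z$ is never cheaper). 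Taking $g(i)=(\delta-1)i$, which is (additive hence) subadditive and satisfies $g(i)=i(\delta-1)\le e_i$, Theorem~\ref{th:main_lower_subadditive} yields $\mathrm{wsat}(n,F_{v,\delta})\ge(\delta-1)(n-v)+\ell-1=(\delta-1)(n-v)+{v-1\choose 2}+\delta-1=(n-v+1)(\delta-1)+{v-1\choose 2}$, which is exactly the target.

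For the upper bound I would write down an explicit weakly $F_{v,\delta}$-saturated graph $H$ on $[n]$ with ${v-1\choose 2}+(n-v+1)(\delta-1)$ edges and a percolation order. The natural candidate: put a clique $K_{v-1}$ on $[v-1]$ and, from each of the remaining $n-v+1$ vertices, draw $\delta-1$ edges into $[v-1]$ (say to $\{1,\dots,\delta-1\}$). This has the right number of edges. To percolate, observe that adding any missing edge $\{w,j\}$ with $w\ge v$, $j\ge \delta$ — together with the clique on $[v-1]$ — creates a copy of $F_{v,\delta}$ provided $w$ already has $\delta$ neighbors in $[v-1]$: indeed $\{1,\dots,v-1\}$ hosts the $K_{v-1}$, and $w$ attached to $\delta$ of them is exactly $F_{v,\delta}$ (the missing-edge vertex being $w$, or a vertex of $[v-1]$ not adjacent to $w$). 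So one first completes all edges from $v$ to $[v-1]$ (each creates such a copy once $v$ has reached degree $\delta$ into the clique, using that $\delta-1$ are already present and one new one is added — more care is needed here: one should add the edges in an order so that at the moment an edge $\{w,j\}$ is added, the vertices playing the role of $F_{v,\delta}$'s high-degree vertices form a clique and $w$ has the correct degree pattern), then move to $v+1$, and so on. After every vertex $\ge v$ is fully joined to $[v-1]$, one can restore all remaining edges (those among $\{v,v+1,\dots,n\}$ and from $[v-1]$ to high vertices beyond the first $\delta-1$) since each new edge sits in a copy of $F_{v,\delta}$ using $v-2$ common neighbors in $[v-1]$.

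The main obstacle I anticipate is the lower bound bookkeeping — specifically, proving that $e_i=i(\delta-1)$ (equivalently, that for every $i$-set $S$ one has $|E(F_{v,\delta})\setminus E(F_{v,\delta}\setminus S)|\ge i(\delta-1)+1$), since one must rule out that including several of $z$'s non-neighbors, or excluding $z$ altogether, gives a cheaper set, and the extremal set is not simply "any $i$ vertices" as it would be for $K_v$. Once $e_i$ is nailed down, both Theorem~\ref{th:main_lower_subadditive} and the explicit percolation construction are routine; the delicate part of the construction is ordering the edge additions in the first phase so each one genuinely completes a copy of $F_{v,\delta}$ rather than merely a copy of $K_v$ minus too many edges.
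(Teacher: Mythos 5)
Your lower bound breaks at the key step: the inequality $g(i)=(\delta-1)i\le e_i$ simply does not hold for large $i$ when $\delta$ is close to $v$, so Theorem~\ref{th:main_lower_subadditive} cannot be invoked with this $g$. In fact the correct value of $e_i$ for $F_{v,\delta}$ is not $i(\delta-1)$. Writing $z$ for the unique degree-$\delta$ vertex, the minimum over $i$-sets $S$ is attained with $z\in S$, in which case $F\setminus S$ is a clique on $v-i$ vertices, so
$$
e_i+1 \;=\; \ell-\binom{v-i}{2} \;=\; \binom{v}{2}-(v-1-\delta)-\binom{v-i}{2},
$$
which one checks is $\ge i(\delta-1)+1$ if and only if $i\le 2(v-\delta)$. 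For instance with $v=10$, $\delta=8$, $i=9$ one gets $e_9=43$ while $9(\delta-1)=63$, so the hypothesis of Theorem~\ref{th:main_lower_subadditive} fails badly. Indeed the optimal subadditive bound from that theorem has slope $\gamma=\min_i e_i/i$, and for $F_{v,\delta}$ with $\delta>(v+1)/2$ one has $\gamma<\delta-1$, so the method of Section~\ref{sc:main} is structurally incapable of producing the claimed linear-in-$n$ coefficient. (Also note even your $i=2$ value is off: $e_2=\delta+v-3$, not $2\delta-2$.)

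The paper's proof of the lower bound therefore takes a completely different route: from a minimum weakly $F_{v,\delta}$-saturated graph $H$, look at the first edge $e$ of a percolation process, which creates a copy $F_0\cong F_{v,\delta}$. Every subsequently activated edge creates a copy of $F_{v,\delta}$ and hence a $K_{\delta+1}$. Deleting from $H$ the $\binom{v-\delta}{2}$ edges of $F_0\setminus\{e\}$ whose both endpoints avoid a chosen $(\delta-1)$-set $X$ in the $F_0$-neighborhood of the low-degree vertex leaves a graph that is weakly $K_{\delta+1}$-saturated in $K_n$. This gives $\mathrm{wsat}(n,K_{\delta+1})\le \mathrm{wsat}(n,F_{v,\delta})-\binom{v-\delta}{2}$, and the Lov\'asz value $\mathrm{wsat}(n,K_{\delta+1})=n(\delta-1)-\binom{\delta}{2}$ finishes the lower bound. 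Your reduction of the statement to $c_F\ge \delta-1$ via Claim~\ref{cl:wsat_lower_bound_additive_constant} is fine (and the paper notes exactly this in Remark~\ref{rk:union_of_cliques}), but the justification of $c_F\ge\delta-1$ must come from the $K_{\delta+1}$ observation, not from $e_i$. Your upper bound is correct, though the paper simply cites \refeq{wsat:upper_bound_general} with $\mathrm{wsat}(v,F_{v,\delta})=\ell-1$ rather than building an explicit percolation order.
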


\proof Recall that, for every graph $F$, $\mathrm{wsat}(n,F)\leq(\delta-1)(n-v)+\mathrm{wsat}(v,F)$ (see~(\ref{wsat:upper_bound_general}) in Introduction). Since $\mathrm{wsat}(v,F_{v,\delta})=\ell-1={v\choose 2}-(v-\delta)$, it immediately gives the upper bound.

Now we prove the lower bound. Let $H\in\underline{\mathrm{wSAT}}(n,F_{v,\delta})$. Consider an $F_{v,\delta}$-bootstrap percolation process that starts on $H$ and stops on $K_n$. Note that every new edge in this process creates a copy of $F_{v,\delta}$ and thus creates a copy of $K_{\delta+1}$. Let $F_0$ be a copy of $F_{v,\delta}$ that is created together with the first edge $e$ added to $H$. Let $x$ be the vertex of $F_0$ of degree $\delta$, and $X$ be a $(\delta-1)$-subset in its neighborhood in $F_0\setminus\{e\}$. Delete from $H$ all the edges of $F_0\setminus\{e\}$ that have both vertices outside $X$. 
Clearly, the obtained graph is weakly $K_{\delta+1}$-saturated in $K_n$. Therefore,
$$
 \mathrm{wsat}(n,K_{\delta+1})\leq\mathrm{wsat}(n,F_{v,\delta})-{v-\delta\choose 2}.
$$
Then the equality $\mathrm{wsat}(n,K_{\delta+1})=n(\delta-1)-{\delta\choose 2}$ (see~\cite{L77}) immediately gives the desired lower bound. $\Box$\\

\begin{remark}
If $F$ is an arbitrary (not necessarily disjoint) union of any number of cliques of size at least $\delta+1$, then the same argument as in the proof of Theorem~\ref{th:F_pq} implies that $\mathrm{wsat}(n,F)=n(\delta-1)+O(1)$ certifying the positive answer to the question about possible values of $c_F$ asked in Introduction. We shall also note that Theorem~\ref{th:F_pq} follows immediately from (\ref{wsat:upper_bound_general}), Claim~\ref{cl:wsat_lower_bound_additive_constant} and the inequality $c_F\geq\delta-1$ that holds true since every edge appearing in an $F$-bootstrap percolation process creates a copy of $K_{\delta+1}$. However, we decided to present the above proof since it shows another neat and quite general approach (a similar idea was used in~\cite[Theorem 2]{FG}). Moreover, Claim~\ref{cl:wsat_lower_bound_additive_constant} also implies that the upper bound (\ref{wsat:upper_bound_general}) is tight for all  $F$ being unions of cliques of size at least $\delta+1$ with $\mathrm{wsat}(v,F)=\ell-1$.
\label{rk:union_of_cliques}
\end{remark}

\subsection{General tight bounds}
\label{sc:general_tight}

In this section, we present an improvement of Theorem~\ref{th:main_lower_subadditive} that gives the exact value of the weak saturation number for a certain family of graphs $F$.\\

For $r\in\mathbb{Z}_{\geq 0}$, define
$$
g^*_r(0)=0,\quad g^*_r(i)=\min_{s\in[i],\,1\leq i_1,\ldots,i_s\leq v-r:\,i_1+\ldots+i_s=i}(e_{i_1}+\ldots+e_{i_s}).
$$
Thus, $g^*=g^*_1$. Note that, for every fixed $i$, $g^*_r(i)$ is a nondecreasing function of $r$. The following improvement of Theorem~\ref{th:main_lower_subadditive} is valid when there is a small enough weakly $F$-saturated graph.

\begin{theorem}
If, for all $n\geq v$, $\mathrm{wsat}(n,F)\leq g^*_2(n-v)+\ell-1$, then, for all $n\geq v$, $\mathrm{wsat}(n,F)=g^*_2(n-v)+\ell-1$.
\label{th:g_2}
\end{theorem}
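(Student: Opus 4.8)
Since the upper bound is part of the hypothesis, it suffices to prove the matching lower bound $\mathrm{wsat}(n,F)\ge g^*_2(n-v)+\ell-1$ for every $n\ge v$. The plan is to repeat the maximal‑decomposition argument behind Theorem~\ref{th:main_lower_subadditive} and Lemma~\ref{lm:max_decomposition} with the choice $g=g^*_2$. The function $g^*_2$ is subadditive (immediate from its definition), and $g^*_2(i)\le e_i$ for every $i\in\{0,1,\dots,v-2\}$ by taking the one‑part decomposition; thus the \emph{only} hypothesis of Theorem~\ref{th:main_lower_subadditive} that $g^*_2$ may violate is $g^*_2(v-1)\le e_{v-1}$. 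Since $e_{v-1}=\ell-1$ (the one remaining vertex of $F\setminus S$ carries no edge), if $g^*_2(v-1)\le\ell-1$ then Theorem~\ref{th:main_lower_subadditive} applies verbatim and there is nothing new to do; so from now on assume $g^*_2(v-1)>\ell-1$. (Also, putting $n=v$ in the hypothesis and using the trivial bound $\mathrm{wsat}(v,F)\ge\ell-1$ gives $\mathrm{wsat}(v,F)=\ell-1$, so a smallest weakly $F$‑saturated graph on $v$ vertices is available.)

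Fix $H\in\mathrm{wSAT}(n,F)$, set $f(i)=g^*_2(i-v)+\ell-1$ for $i\ge v$, and let $\mathcal B^*=(B_1,\dots,B_k)$ be a lexicographically maximal element of $\mathcal O_H$, as in the proof of Lemma~\ref{lm:max_decomposition}; assume for contradiction $\mathcal B^*\ne([n])$. The step absorbing a vertex adjacent to all of $B_1$ goes through unchanged, as it uses only subadditivity and $g^*_2(1)=e_1=\delta-1<v$. Hence there is a first cross‑edge $\{u,v\}$ in an $F$‑bootstrap percolation process on $H$, creating a copy $\tilde F\cong F$; with $\mathcal I$, $W_\kappa=B_\kappa\cap V(\tilde F)$ and $\tilde B=\bigcup_{\kappa\in\mathcal I}B_\kappa\cup V(\tilde F)$ as there, the entire chain of inequalities bounding $|E(H|_{\tilde B})|$ from below by $f(|\tilde B|)$ survives word for word, \emph{except} that in the summands with $|W_\kappa|=1$ one would need $g^*_2(v-1)\le e_{v-1}$, and such a block contributes a deficit $g^*_2(v-1)-(\ell-1)>0$. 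So the theorem is complete once one shows that the decomposition and the first cross‑edge can be chosen so that no block meets $\tilde F$ in exactly one vertex, or that the deficits arising from such blocks can be compensated.

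Resolving this singleton‑intersection case is the crux and, I expect, the main obstacle; it is precisely here that the hypothesis (not merely subadditivity) must be used. Several routes look viable. First, when $\{u,v\}$ is added many copies of $F$ may appear, so one may try to pick $\tilde F$ meeting each $B_\kappa$ in $0$ or $\ge 2$ vertices, exploiting the rigidity forced by $\mathrm{wsat}(v,F)=\ell-1$ together with $\mathrm{wsat}(n,F)=g^*_2(n-v)+\ell-1$. Second, if $B_\kappa\cap V(\tilde F)=\{w\}$ one may delete $w$ from $B_\kappa$: this keeps $B_\kappa\setminus\{w\}$ a valid block provided $|B_\kappa|\ge v+1$ and $\deg_{H|_{B_\kappa}}(w)\le g^*_2(|B_\kappa|-v)-g^*_2(|B_\kappa|-1-v)$, a bound to be established by a percolation argument internal to $B_\kappa$, while the case $|B_\kappa|=v$ must be ruled out separately (a size‑$v$ block with exactly $\ell-1$ edges should not be able to meet a freshly created copy of $F$ in a single vertex when $g^*_2(v-1)>\ell-1$). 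Third, one may strengthen the defining invariant of $\mathcal O_H$ so that each block of size $v+j$ carries a surplus beyond $g^*_2(j)+\ell-1$ that is exactly enough to absorb the deficit $g^*_2(v-1)-(\ell-1)$ at each later split through a cut vertex; this route naturally meets the ``bridge''/low‑edge‑connectivity analysis used for the other tight families of this section and is presumably the one carried out in the companion subsections.

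Once singleton intersections are neutralized by one of these means, $\tilde B$ (or the reconfigured decomposition) is a strictly larger element of $\mathcal O_H$ in the lexicographic order, contradicting the maximality of $\mathcal B^*$. Hence $\mathcal B^*=([n])$, so $|E(H)|\ge f(n)=g^*_2(n-v)+\ell-1$; as $H\in\mathrm{wSAT}(n,F)$ was arbitrary this is the desired lower bound, and together with the assumed upper bound it gives $\mathrm{wsat}(n,F)=g^*_2(n-v)+\ell-1$.
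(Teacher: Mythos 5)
Your analysis correctly locates the obstruction: $g^*_2$ is subadditive and satisfies $g^*_2(i)\le e_i$ for $i\le v-2$, but may have $g^*_2(v-1)>e_{v-1}=\ell-1$, and in the proof of Lemma~\ref{lm:max_decomposition} the blocks $B_\kappa$ meeting $\tilde F$ in a single vertex are exactly where this bound is needed. You also correctly extract $\mathrm{wsat}(v,F)=\ell-1$ from the hypothesis at $n=v$. However, after identifying the crux you stop: you list three candidate routes without carrying any of them through, and you say yourself that this is ``the main obstacle.'' That is a genuine gap, not a deferred detail.

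The paper's actual resolution is a structural change that none of your three routes quite reaches. It does \emph{not} try to force $\tilde F$ to avoid singleton intersections with blocks, nor to shrink blocks. Instead, it (i) works with a \emph{minimal} weakly saturated $H$, (ii) redefines $\mathcal{O}_H$ so that the blocks need only be \emph{edge}-disjoint rather than vertex-disjoint (minimality then forces $|E(H|_{B_\kappa})|=f_2(|B_\kappa|)$ exactly and $|B_{\kappa_1}\cap B_{\kappa_2}|\le 1$), and (iii) exploits the hypothesis as a \emph{renewal} device: since $f_2(|B_\kappa|)$ edges suffice to saturate the clique on $B_\kappa$, the edges inside any block may be rearranged to any other saturating configuration on $B_\kappa$ without destroying weak saturation of the whole graph or the block structure, and the maximum of $\mathcal{B}^*_H$ is taken over all minimal $H$. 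With these tools, blocks $B_\kappa$ meeting $\tilde F$ in $\le 1$ vertex are simply left untouched in the new tuple (they share no edge with the merged block), while blocks meeting $\tilde F$ in $\ge 2$ vertices are renewed so that $\tilde F$ uses their internal edges, and only the largest of them, $B_q$, is merged with $V(\tilde F)$; the edge count of the merged block is then estimated using $e_{v-|B_q\cap V(\tilde F)|}$ with $|B_q\cap V(\tilde F)|\ge 2$, so the problematic index $v-1$ never appears. Your second route (``delete $w$ from $B_\kappa$'') gestures toward local surgery on blocks, but it is a different operation and, as you note, runs into the $|B_\kappa|=v$ case; your first and third routes are not what the paper does. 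Without the edge-disjointness relaxation and the renewal lemma, the lower bound is not established, so the proposal as written does not prove the theorem.
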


We prove Theorem~\ref{th:g_2} in Section~\ref{sc:proof-th:g_2}. Note that it immediately implies that $\mathrm{wsat}(n,K_4)=2n-3$ (though a combinatorial proof of this fact was known~\cite{bol}). A family of graphs $F$ such that Theorem~\ref{th:g_2} gives an exact value of the weak saturation number can be distilled using Theorem~\ref{th:bridges} which we state below and prove in Section~\ref{sc:proof-th:bridges}. 

Consider the set $\mathcal{K}$ of all $i\in[v]$ such that there are no positive integers $i_1,\ldots,i_s$, $s\geq 2$, satisfying $i_1+\ldots+i_s=i$ and $e_i\geq e_{i_1}+\ldots+e_{i_s}$. Let $\mathcal{K}_r=\mathcal{K}\cap[v-r]$ for $r\in\{0,1,\ldots,v-1\}$. Let $\beta=\beta(F)$ be the minimum number of vertices that can be deleted from $F$ in such a way that the remaining graph contains a cut-edge (i.e. its deletion increases the number of connected components). 

\begin{theorem}
Let $r\in\{0,1,\ldots,v-1\}$. Then the following three properties are equivalent:
\begin{enumerate}
\item For every $n\geq v$, $\mathrm{wsat}(n,F)\leq g^*_r(n-v)+\ell-1$.
\item $\mathrm{wsat}(v,F)=\ell-1$, and the maximum element of $\mathcal{K}_r$ is at most $v-\beta$.
\item $\mathrm{wsat}(v,F)=\ell-1$, and, for every $i\geq 0$, $g^*_{\beta}(i)\leq g^*_r(i)$.
\end{enumerate}
\label{th:bridges}
\end{theorem}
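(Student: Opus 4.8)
\textbf{Proof proposal for Theorem~\ref{th:bridges}.}

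The plan is to prove the cycle of implications $(1)\Rightarrow(2)\Rightarrow(3)\Rightarrow(1)$, pushing almost all the combinatorial work into the step $(2)\Rightarrow(1)$, which constructs an economical weakly $F$-saturated graph, while the other two implications are essentially bookkeeping about the functions $g^*_r$ and the set $\mathcal K_r$.

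For $(1)\Rightarrow(2)$: taking $n=v$ in property~(1) gives $\mathrm{wsat}(v,F)\le g^*_r(0)+\ell-1=\ell-1$, and the reverse inequality is automatic since the first activated edge completes a copy of $F$, so $\mathrm{wsat}(v,F)=\ell-1$. For the statement about $\mathcal K_r$, I would argue by contradiction: suppose $i^*=\max\mathcal K_r > v-\beta$. By definition of $\mathcal K_r$ (no nontrivial splitting of $i^*$ beats $e_{i^*}$) one has $g^*_r(i^*)=e_{i^*}$, so property~(1) forces $\mathrm{wsat}(v+i^*,F)\le e_{i^*}+\ell-1$. On the other hand $i^*>v-\beta$ means $i^*\ge v-\beta+1$, i.e.\ $\beta\ge v-i^*+1$; since $\beta$ is the minimum number of vertices whose deletion leaves a cut-edge, \emph{every} set of $v-i^*$ vertices, when deleted from $F$, leaves a $2$-edge-connected-ish graph (no cut-edge), which via the counting inequality~\eqref{eq:count_edges_inside}--\eqref{eq:count_all_edges_large_k} applied with $\lambda\ge 1$ gives $e_{v-(v-i^*)}=e_{i^*}$ strictly larger than the value one would get from a decomposition — the point being that a weakly $F$-saturated graph on $v+i^*$ vertices with only $e_{i^*}+\ell-1$ edges would, when we look at the first activated edge not inside the ``core'', be forced to have a cut-edge in a deleted copy of $F$, contradicting $\beta\ge v-i^*+1$. (This is the same bridge/cut-edge analysis that underlies the definition of $\beta$.) The cleanest route here is probably to invoke Theorem~\ref{th:main_lower_subadditive} directly with a carefully chosen subadditive $g$ that exploits $\beta$.

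For $(2)\Rightarrow(3)$: this is purely a statement about the recursively defined minima $g^*_r$. Since $g^*_s$ is nondecreasing in $s$ (noted right after its definition), we always have $g^*_{\beta}(i)\ge g^*_r(i)$ when $\beta\ge r$, so the content is the reverse inequality, which should follow from the hypothesis $\max\mathcal K_r\le v-\beta$. The idea is that any optimal decomposition $i=i_1+\dots+i_s$ achieving $g^*_r(i)$ can be ``refined'': whenever some part $i_j$ lies in $(v-\beta,\,v-r]$, membership of $i_j$ in the range forces, by $\max\mathcal K_r\le v-\beta$, that $i_j\notin\mathcal K_r$, hence $i_j$ itself splits as $i_j=a_1+\dots+a_t$ with $e_{i_j}\ge e_{a_1}+\dots+e_{a_t}$ and (after iterating) all parts $\le v-\beta$; substituting yields a decomposition with all parts $\le v-\beta$ of no larger total cost, i.e.\ $g^*_{\beta}(i)\le g^*_r(i)$. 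One must check the refinement terminates, which it does because the parts strictly decrease.

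For $(3)\Rightarrow(1)$: it suffices to prove $\mathrm{wsat}(n,F)\le g^*_{\beta}(n-v)+\ell-1$ for all $n$, since~(3) then upgrades $\beta$ to $r$. Here I would run an induction on $n-v$ mirroring the proof of Claim~\ref{cl:general_upper_bound_sparse} but using $\beta$-economical blocks: for each part $i_j\le v-\beta$ in an optimal decomposition of $n-v$, attach a fresh copy of an $e_{i_j}$-minimizing configuration — that is, take $S_j\in\binom{V(F)}{i_j}$ with $|E(F)\setminus E(F\setminus S_j)|-1=e_{i_j}$, add $i_j$ new vertices carrying the induced graph on $S_j$, join them to a fixed copy of $V(F)\setminus S_j$ inside the already-built graph exactly as in $F$, and delete one of the crossing edges; because $\mathrm{wsat}(v,F)=\ell-1$ (from~(2), which~(3) carries along), those crossing edges can all be re-activated, and then since $|V(F)\setminus S_j|\ge \beta\ge\delta$ wait — more carefully, $|V(F)\setminus S_j|=v-i_j\ge\beta$, and one needs $v-i_j\ge\delta-1$ to re-activate the remaining incident edges, which holds as $i_j\le v-\beta\le v-\delta+1$... actually the cleanest sufficient condition is $i_j\le v-\beta$ together with $\beta\ge 1$ and a direct check that the new vertices end up with $\ge\delta-1$ old neighbors. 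The hard part, and where I expect the real obstacle, is exactly this last point: verifying that deleting a block of $i_j$ vertices really does leave enough structure that the percolation can ``rebuild'' it, which is precisely why the bound on $\beta$ (cut-edges) rather than a cruder connectivity parameter is the right hypothesis — establishing the tight correspondence between ``no cut-edge after deleting $v-i_j$ vertices'' and ``the $i_j$-block is reconstructible by percolation'' is the crux of the whole theorem, and I would spend most of the write-up there.
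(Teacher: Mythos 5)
Your overall architecture (a cycle $(1)\Rightarrow(2)\Rightarrow(3)\Rightarrow(1)$, with $(2)\Rightarrow(3)$ handled by refining decompositions and $(3)\Rightarrow(1)$ by an explicit block construction) is sound and matches the paper's in spirit; the $(2)\Rightarrow(3)$ refinement is essentially the paper's Claim~\ref{cl:from_K_r_to_all_Z}, and you also correctly reduce $\mathrm{wsat}(v,F)=\ell-1$ from taking $n=v$. However, both steps you flag as ``the crux'' are genuine gaps, and in one case the suggested route cannot work.

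First, in $(1)\Rightarrow(2)$ you propose to ``invoke Theorem~\ref{th:main_lower_subadditive} directly with a carefully chosen subadditive $g$.'' This cannot close the gap: every admissible $g$ in that theorem satisfies $g(i)\leq e_i$ for $i\leq v-1$, so the resulting lower bound is at most $g^*(n-v)+\ell-1\leq g^*_r(n-v)+\ell-1$, which never contradicts property~(1). What the paper actually does is much more hands-on: assuming $i\in\mathcal{K}$, $i>v-\beta$, and $\mathrm{wsat}(v+i,F)\leq e_i+\ell-1$, it takes a minimizer $H\in\underline{\mathrm{wSAT}}(v+i,F)$, tracks the growing union $U_j=V(F_1)\cup\cdots\cup V(F_j)$ along a bootstrap process, shows (using $i\in\mathcal{K}$ and subadditivity) that the process must first activate a clique on $V_1=V(F_1)$, then one on some $V_2$, and finally an edge $\{x,y\}$ between $V_1\setminus V_2$ and $V_2\setminus V_1$; the copy of $F$ created by $\{x,y\}$ then has a cut-edge after deleting only $|V_1\cap V_2|=v-i\leq\beta-1$ vertices, contradicting the definition of $\beta$. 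That counting-plus-percolation argument, and the supporting fact that $g_F(i)=\mathrm{wsat}(v+i,F)-(\ell-1)$ is itself subadditive (Claim~\ref{cl:wsat-subadditive}), are what you are missing.

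Second, in $(3)\Rightarrow(1)$ you correctly set up the block construction and correctly sense that the real issue is why the blocks are ``reconstructible by percolation,'' but the reason you offer (a degree count $v-i_j\geq\delta-1$) is not the right one, and you already see it does not go through cleanly. The correct use of $\beta$ is this: after one activates the clique on $V(H_{j-1})$ and the clique on $U_j\cup W_j$ (which is possible because $\mathrm{wsat}(v,F)=\ell-1$ and by induction), one needs to add each missing edge $\{x,y\}$ with $x\in V(H_{j-1})\setminus W_j$ and $y\in U_j$. Take a $\beta$-set $T\subset V(F)$ whose deletion leaves a cut-edge $\{a,b\}$, splitting $F\setminus T$ into components $C_a\ni a$ and $C_b\ni b$. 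Since $|W_j|=v-i_j\geq\beta$, map $T$ into $W_j$, map $C_a$ into the clique on $V(H_{j-1})$ with $a\mapsto x$, and map $C_b$ into the clique on $U_j\cup W_j$ with $b\mapsto y$. Because $\{a,b\}$ is the unique edge of $F$ crossing $C_a$ and $C_b$, every edge of this embedded copy of $F$ except $\{x,y\}$ already lies inside one of the two cliques, so $\{x,y\}$ activates. This is where the condition $i_j\leq v-\beta$ is used, not via any degree count — it is needed exactly so that $W_j$ is large enough to absorb $T$.

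So: the skeleton is fine, but both pivotal lemmas — the percolation-tracking contradiction for $(1)\Rightarrow(2)$ and the cut-edge embedding for $(3)\Rightarrow(1)$ — remain to be supplied, and your proposed shortcut via Theorem~\ref{th:main_lower_subadditive} for the first one does not work.
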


Note that, in particular, if we let $r=\beta$, then we immediately get the following corollary.
\begin{corollary}
\label{crl:beta}
If $\mathrm{wsat}(v,F)=\ell-1$, then, for every $n\geq v$, $\mathrm{wsat}(n,F)\leq g^*_{\beta}(n-v)+\ell-1$.
\end{corollary}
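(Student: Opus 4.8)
The plan is to derive Corollary~\ref{crl:beta} as the special case $r=\beta$ of Theorem~\ref{th:bridges}, so the task reduces to verifying that the hypothesis of the corollary makes one of the three equivalent conditions in Theorem~\ref{th:bridges} hold for $r=\beta$. First I would observe that $\beta=\beta(F)$ is a legitimate choice of $r$: since $F$ has no isolated vertices and $\delta>1$ is not assumed here, we should note $\beta\in\{0,1,\ldots,v-1\}$ (deleting $v-1$ vertices leaves a single vertex, hence no cut-edge, so in fact $\beta\leq v-2$ whenever such a deletion exists, and if no deletion ever produces a cut-edge one takes the convention forced by the definition; in any case $\beta$ lies in the admissible range for $r$). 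This lets us legitimately instantiate Theorem~\ref{th:bridges} with $r=\beta$.

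Next I would check condition~(3) of Theorem~\ref{th:bridges} for $r=\beta$. That condition reads: $\mathrm{wsat}(v,F)=\ell-1$, and for every $i\geq 0$, $g^*_{\beta}(i)\leq g^*_r(i)$. With $r=\beta$ this second clause becomes $g^*_{\beta}(i)\leq g^*_{\beta}(i)$, which is trivially true for all $i$. Hence, assuming $\mathrm{wsat}(v,F)=\ell-1$, condition~(3) holds verbatim. By the equivalence $(3)\Leftrightarrow(1)$ in Theorem~\ref{th:bridges}, condition~(1) holds, i.e. for every $n\geq v$, $\mathrm{wsat}(n,F)\leq g^*_{\beta}(n-v)+\ell-1$, which is exactly the conclusion of the corollary.

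There is essentially no obstacle here: the corollary is the ``diagonal'' case of the theorem, engineered so that the monotonicity comparison $g^*_{\beta}(i)\leq g^*_r(i)$ degenerates to an identity. The only points that merit a sentence of care are (i) confirming that $r=\beta$ is in the permitted index set $\{0,1,\ldots,v-1\}$ so that Theorem~\ref{th:bridges} applies, and (ii) spelling out that the hypothesis $\mathrm{wsat}(v,F)=\ell-1$ of the corollary is precisely the first half of condition~(3) (equivalently condition~(2)), so nothing further needs to be verified. I would therefore present the proof as a two-line deduction: invoke Theorem~\ref{th:bridges} with $r=\beta$, note condition~(3) is satisfied because its inequality clause is an equality and its saturation clause is the hypothesis, and conclude condition~(1), which is the statement of the corollary.
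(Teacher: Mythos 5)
Your proposal is correct and is exactly the paper's intended derivation: instantiating Theorem~\ref{th:bridges} with $r=\beta$ reduces condition~(3) to the hypothesis $\mathrm{wsat}(v,F)=\ell-1$ (its inequality clause becoming $g^*_{\beta}(i)\leq g^*_{\beta}(i)$), and the equivalence then yields condition~(1), which is the corollary. One point worth being aware of, though it does not invalidate your deduction: to avoid circularity the paper's proof of Theorem~\ref{th:bridges} internally establishes Corollary~\ref{crl:beta} first by a direct construction (inside the proof of the lemma characterizing $\mathcal{K}_{\beta}$), so the quick deduction you give mirrors the paper's stated route but only becomes a genuine proof once Theorem~\ref{th:bridges} has been fully proved.
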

Let us stress that any other possible upper bound of the form $g^*_r(n-v)+\ell-1$ could not be better than the bound provided in Corollary~\ref{crl:beta}. Moreover,
\begin{corollary}
If $\mathrm{wsat}(v,F)=\ell-1$ and the maximum element of $\mathcal{K}_2$ is at most $v-\beta$, then, for every $n\geq v$, $\mathrm{wsat}(n,F)=g^*_2(n-v)+\ell-1$.
\label{cry:small_K_2}
\end{corollary}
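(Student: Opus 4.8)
The plan is to deduce Corollary~\ref{cry:small_K_2} by combining Theorem~\ref{th:bridges}, Theorem~\ref{th:g_2}, and Corollary~\ref{crl:beta}. The hypotheses are exactly that $\mathrm{wsat}(v,F)=\ell-1$ and that the maximum element of $\mathcal{K}_2$ is at most $v-\beta$; these are precisely property~(2) in Theorem~\ref{th:bridges} with $r=2$. Hence the plan is first to invoke Theorem~\ref{th:bridges} (in the direction $(2)\Rightarrow(1)$, with $r=2$) to conclude that, for every $n\geq v$, $\mathrm{wsat}(n,F)\leq g^*_2(n-v)+\ell-1$. This is the only nontrivial input, and it is already granted to us as a proven statement, so no real work is needed here beyond checking that the hypotheses match verbatim.

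Once the upper bound $\mathrm{wsat}(n,F)\leq g^*_2(n-v)+\ell-1$ is established for all $n\geq v$, the second step is immediate: this is exactly the hypothesis of Theorem~\ref{th:g_2}, whose conclusion is that $\mathrm{wsat}(n,F)=g^*_2(n-v)+\ell-1$ for all $n\geq v$. So the proof is a two-line chain of implications: hypotheses $\Rightarrow$ Theorem~\ref{th:bridges}(1) with $r=2$ $\Rightarrow$ Theorem~\ref{th:g_2} conclusion. One should perhaps remark, for completeness, that the matching lower bound $\mathrm{wsat}(n,F)\geq g^*_2(n-v)+\ell-1$ (which is the content of Theorem~\ref{th:g_2}) is what makes the inequality an equality; but since Theorem~\ref{th:g_2} packages both directions, invoking it once suffices.

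I do not expect any genuine obstacle here, since Corollary~\ref{cry:small_K_2} is a pure corollary obtained by specializing $r=2$ and feeding the output of one theorem into another. The only point requiring minor care is notational bookkeeping: one must make sure that ``the maximum element of $\mathcal{K}_2$ is at most $v-\beta$'' in the corollary's statement coincides with ``the maximum element of $\mathcal{K}_r$ is at most $v-\beta$'' in Theorem~\ref{th:bridges}(2) for the choice $r=2$, and that $\mathrm{wsat}(v,F)=\ell-1$ appears identically in both. These match, so the deduction goes through cleanly. In writing it up I would simply state: by Theorem~\ref{th:bridges} applied with $r=2$, the hypotheses imply property~(1), i.e.\ $\mathrm{wsat}(n,F)\leq g^*_2(n-v)+\ell-1$ for all $n\geq v$; then Theorem~\ref{th:g_2} upgrades this inequality to an equality, which is the claim. $\Box$
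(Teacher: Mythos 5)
Your proposal is correct and matches the paper's intended (implicit) deduction exactly: apply Theorem~\ref{th:bridges} with $r=2$ in the direction $(2)\Rightarrow(1)$ to obtain the upper bound $\mathrm{wsat}(n,F)\leq g^*_2(n-v)+\ell-1$, then invoke Theorem~\ref{th:g_2} to upgrade it to equality. The paper states the corollary without a written-out proof precisely because this two-step specialization is routine, and your write-up captures it faithfully.
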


We immediately get that $\mathrm{wsat}(n,F)=g^*_2(n-v)+\ell-1$ for {\it all connected graphs $F$ that are not 4-edge-connected and satisfy} $\mathrm{wsat}(v,F)=\ell-1$, since for such graphs $\beta\leq 2$ due to the following claim. 

\begin{claim}
Let $k$ be the edge-connectivity of $F$. Then $\beta\leq k-1$.
\label{cl:edge_connectivity_to_beta}
\end{claim}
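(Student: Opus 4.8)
Let $k$ be the edge-connectivity of $F$. Then $\beta\leq k-1$.

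The plan is to exhibit a set of at most $k-1$ vertices whose removal produces a graph with a cut-edge. First I would recall that, since $F$ has edge-connectivity exactly $k$, there is a partition $V(F)=X\sqcup Y$ with $e[X,Y]=k$, where $e[X,Y]$ denotes the number of edges between $X$ and $Y$; fix such a partition and let $C=\{x_1y_1,\ldots,x_ky_k\}$ be the crossing edges (the $x_i\in X$, $y_i\in Y$, not necessarily distinct among themselves). The idea is that deleting one endpoint from each of $k-1$ of these crossing edges leaves a graph in which the single surviving crossing edge is a bridge, provided we are careful that deletions do not disconnect $X$ or $Y$ internally in a way that makes the construction vacuous — but in fact we do not need internal connectivity: we just need the last crossing edge to be a cut-edge of whatever remains.

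The key steps, in order: (1) Take the crossing edges $x_1y_1,\ldots,x_ky_k$. For $i=2,\ldots,k$ delete a vertex incident to $x_iy_i$ — concretely, delete the $X$-endpoint $x_i$ (if some $x_i$ coincide, we delete fewer than $k-1$ vertices, which is even better). Let $F'$ be the resulting graph, obtained from $F$ by deleting a set $S$ of at most $k-1$ vertices, all lying in $X$. (2) Observe that in $F'$ the only edge between $X\setminus S$ and $Y\setminus S=Y$ is $x_1y_1$ (all other crossing edges lost their $X$-endpoint), assuming $x_1\notin S$; since $x_1$ was never chosen for deletion, indeed $x_1\in X\setminus S$, and $x_1y_1\in E(F')$. (3) Now argue $x_1y_1$ is a cut-edge of $F'$: removing it from $F'$ leaves no edge between $X\setminus S$ and $Y$, so the components of $F'-x_1y_1$ split across this partition, and since $x_1$ has lost its only neighbor on the $Y$-side while $y_1$ has lost its only neighbor on the $X$-side, $x_1$ and $y_1$ lie in different components of $F'-x_1y_1$. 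Hence the number of components strictly increases, so $x_1y_1$ is a cut-edge of $F'$, and $\beta\le|S|\le k-1$.

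The main obstacle is a small degenerate case: one must ensure that the graph we produce is legitimately covered by the definition of $\beta$, i.e. that $F'$ genuinely \emph{has} a cut-edge and we have not accidentally produced $x_1y_1$ as an isolated edge-component in a trivial way — but re-reading the definition, a cut-edge is simply one whose deletion increases the number of connected components, and an edge joining $x_1$ and $y_1$ where both have no other path between them qualifies regardless of whether the rest of $F'$ is connected. A secondary subtlety is that if $k=1$ the claim reads $\beta\le 0$, which would require $F$ itself to have a cut-edge with no deletions; this is consistent, since edge-connectivity $1$ means $F$ has a bridge. So the only real care needed is the bookkeeping of which endpoints to delete so that $x_1$ survives; choosing to always delete $X$-endpoints of the edges indexed $2,\ldots,k$ handles this cleanly, and the argument goes through.
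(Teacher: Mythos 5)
Your overall strategy is the same as the paper's: pick a minimum edge cut of size $k$, designate one crossing edge as the one to survive, and delete one endpoint from each of the other $k-1$ crossing edges so that the surviving edge becomes a bridge. The paper phrases this slightly more abstractly (a set $\mathcal{E}$ of $k$ edges whose deletion disconnects $F$, and a designated edge $\{a,b\}\in\mathcal{E}$); you phrase it via the $X\sqcup Y$ partition, which is equivalent.

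However, your concrete recipe has a genuine gap. You decree ``delete the $X$-endpoint $x_i$ of $x_iy_i$ for $i=2,\dots,k$'' and then assert ``$x_1$ was never chosen for deletion, indeed $x_1\in X\setminus S$.'' This fails if $x_1=x_i$ for some $i\geq 2$: then deleting $x_i$ \emph{does} delete $x_1$, the edge $x_1y_1$ is destroyed, and the conclusion collapses. Your parenthetical remark ``(if some $x_i$ coincide, we delete fewer than $k-1$ vertices, which is even better)'' overlooks precisely this harmful coincidence. A concrete failure: let $F$ be two copies of $K_4$ glued at a single vertex $a$. The edge-connectivity is $3$, realized by the cut separating $a$ and one triangle from the other triangle, and all three crossing edges share the $X$-endpoint $x_1=x_2=x_3=a$. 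Your recipe would delete $a$ itself, after which no crossing edge remains. The paper avoids this by deleting, from each edge of $\mathcal{E}\setminus\{a,b\}$, a vertex \emph{other than both $a$ and $b$}; such a vertex always exists because an edge of $\mathcal{E}$ distinct from $\{a,b\}$ cannot have both endpoints in $\{a,b\}$. Your proof needs the same safeguard: delete from $x_iy_i$ an endpoint outside $\{x_1,y_1\}$ (such an endpoint exists since $x_iy_i\neq x_1y_1$ and $X\cap Y=\varnothing$), rather than unconditionally deleting $x_i$.
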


\begin{proof}
Let $\mathcal{E}$ be a set of $k$ edges such that their deletion makes $F$ disconnected. Let $\{a,b\}\in\mathcal{E}$. Let us follow the edges of $\mathcal{E}\setminus\{a,b\}$ one by one and delete from $F$, at each step, a single vertex of the considered edge other than both $a$ and $b$ (if such a vertex is already deleted, then we just move to the next edge without any deletion). Eventually we get a graph with the cut-edge $\{a,b\}$. The number of vertices that were deleted is at most $k-1$.
\end{proof}

We are also able to get the sharp value for a family of graphs $F$ that are obtained by drawing several {\it disjoint} edges between two cliques. For positive integers $1\leq c\leq a\leq b$, let $F_{a,b,c}$ be obtained by drawing $c$ disjoint edges between disjoint $K_a$ and $K_b$. It is clear that $\mathrm{wsat}(v,F)=\ell-1$. Therefore, the exact value for $c\leq 3$ follows from Corollary~\ref{cry:small_K_2} and Claim~\ref{cl:edge_connectivity_to_beta}: $\mathrm{wsat}(n,F_{a,b,c})=g^*_2(n-v)+\ell-1$, where $g^*_2$ can be easily computed directly. In particular, for $i$ divisible by $a$, we get $g^*_2(i)=\frac{i}{a}\left({a\choose 2}+c-1\right)$. It could be generalised to larger values of $c$.

\begin{theorem}
If $a<b$, then, for every $n\geq v$, $\mathrm{wsat}(n,F_{a,b,c})=g^*(n-v)+\ell-1$. If $a=b=5$, $c=4$, then  $\mathrm{wsat}(n,F_{a,b,c})=g^*_2(n-v)+\ell-1$.
\label{th:F_a_b_c}
\end{theorem}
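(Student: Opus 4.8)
\textbf{Proof proposal for Theorem~\ref{th:F_a_b_c}.}

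The plan is to verify the hypotheses of Corollary~\ref{cry:small_K_2} (equivalently, Theorem~\ref{th:bridges} with $r=2$ combined with Theorem~\ref{th:g_2}) for $F=F_{a,b,c}$, and then to identify the function $g^*$ (resp. $g^*_2$) explicitly so that the stated closed form falls out. Since $F_{a,b,c}$ is obtained by adding $c$ disjoint edges between a $K_a$ and a $K_b$ on disjoint vertex sets, we have $\ell = \binom{a}{2}+\binom{b}{2}+c$, the minimum degree is $\delta = a-1$ (attained by the vertices of the smaller clique that are not incident to a crossing edge, using $c\le a\le b$), and $\mathrm{wsat}(v,F)=\ell-1$ is immediate by activating the one missing edge inside a copy of $F$. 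So the real content is: (i) computing $e_i$ for all $i$, hence $\mathcal{K}$, hence $\beta$; and (ii) checking that the maximum element of $\mathcal{K}_2$ is at most $v-\beta$ in the regimes claimed.

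First I would compute $e_i = \min_{|S|=i}|E(F)\setminus E(F\setminus S)|-1$. Deleting a set $S$ with $x$ vertices in the $K_a$-side and $y$ vertices in the $K_b$-side removes $\binom{a}{2}-\binom{a-x}{2}$ edges inside the small clique, $\binom{b}{2}-\binom{b-y}{2}$ inside the large clique, and a number of crossing edges between $0$ and $\min(x+y,c)$; to minimize we put $S$ inside whichever clique is cheaper per vertex and avoid the matching vertices as long as possible, so for small $i$ the minimizer sits in $K_a$ and picks up no crossing edge until forced. This makes $e_i$ a piecewise-quadratic function of $i$, and the set $\mathcal{K}$ of "irreducible" indices (those $i$ with $e_i < \min_{i_1+\dots+i_s=i,\,s\ge2} e_{i_1}+\dots+e_{i_s}$) turns out to be an initial segment $\{1,2,\ldots,t\}$ for a threshold $t$ that depends on $a,b,c$: roughly, once $i$ reaches $a$ (a full small clique plus its matching) the value $e_i$ can be split as $e_{a}+\text{(rest)}$ favorably, after which nothing new is irreducible. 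For $\beta$, note that deleting the $c-1$ non-matching-endpoint vertices from one side of each crossing edge except one leaves a cut-edge (the argument of Claim~\ref{cl:edge_connectivity_to_beta} applied to the edge-connectivity $k=c$), so $\beta\le c-1$; and when $a<b$ one checks $\beta\le 1$ is impossible only if $c=1$, but a direct inspection gives the exact $\beta$ needed.

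The inequality to verify is then $\max\mathcal{K}_2 \le v-\beta = a+b-\beta$ when $a<b$ (with $r=\beta$, i.e.\ using $g^*=g^*_1$ is legitimate precisely because $g^*_\beta = g^*_1$ in this regime, which is the content of property~(3) of Theorem~\ref{th:bridges}), and $\max\mathcal{K}_2\le a+b-\beta = 10-\beta$ for $(a,b,c)=(5,5,4)$. Since $\mathcal{K}_2 = \mathcal{K}\cap[v-2]$ and $\mathcal{K}$ is the initial segment $\{1,\ldots,t\}$ with $t$ comfortably below $a$ (and below $a+b-\beta$) in all the listed cases, this is a finite check once $t$ and $\beta$ are pinned down; the symmetric case $a=b$ is genuinely tighter because the per-vertex cost of deleting from either clique is the same, which is why only $(5,5,4)$ is asserted and why one must use $g^*_2$ rather than $g^*$ there. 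Having established the hypotheses, Corollary~\ref{cry:small_K_2} gives $\mathrm{wsat}(n,F_{a,b,c})=g^*_2(n-v)+\ell-1$ (and $g^*_2=g^*$ when $a<b$ by the same $g^*_\beta=g^*_r$ comparison), and the final step is to evaluate $g^*$: the optimal partition of $n-v$ into pieces of size $\le v-1$ (resp.\ $\le v-2$) uses as many copies of the cheapest "slope" block as possible, which by the $e_i$ computation is the block of size $a$ contributing $\binom{a}{2}+c-1$ each, giving the announced formula up to the $O(1)$ boundary term absorbed into $\ell-1$ and the remainder.

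The main obstacle I expect is step (i): getting the formula for $e_i$ exactly right across all the cases (which clique the deleting set lives in, how many crossing edges are unavoidably hit, the ceiling in \eqref{eq:count_edges_inside} when degree sums are odd), and then proving that $\mathcal{K}$ really is an initial segment with the claimed threshold — in particular ruling out a stray irreducible index near $v$ created by the interaction of the two cliques. The case $a=b$ is the delicate one: there the "cheap side" is ambiguous, several partitions tie, and one must check that enlarging the allowed block size from $v-2$ to $v-1$ would strictly help (so that $g^*\ne g^*_2$ in general and $c=4$, $a=b=5$ is exactly the borderline where $g^*_2$ is still optimal), which is presumably why the theorem stops at that single extra case rather than asserting the bound for all $a=b$.
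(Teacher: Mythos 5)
Your high-level plan is essentially the paper's: reduce the theorem to verifying property~(2) of Theorem~\ref{th:bridges} (namely $\mathrm{wsat}(v,F)=\ell-1$ and $\max\mathcal{K}_r\leq v-\beta$ for the relevant $r$), bound $\beta$ via the cut given by the $c$ crossing edges and Claim~\ref{cl:edge_connectivity_to_beta}, and then pair the resulting upper bound with the lower bound coming from Theorem~\ref{th:main_lower_subadditive} / Theorem~\ref{th:g_2}. For the $a=b=5$, $c=4$ case this really is a finite check, as you say, and matches the paper.

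However, for $a<b$ the proposal leaves the actual crux untouched, and the gloss you put on it is partly wrong. The paper does \emph{not} compute $e_i$ in closed form nor establish that $\mathcal{K}$ is an initial segment (your ``initial segment $\{1,\dots,t\}$'' claim is unproven and not needed); instead it argues by contradiction: assume some $i\in[a+1,\,a+b-1]$ lies in $\mathcal{K}_1$, write $i=\tilde a+\tilde b$ according to how the optimal deleted set splits between the two cliques, and then do a three-way case analysis ($\tilde b<a$; $a\leq\tilde b<b$; $\tilde b=b$) involving nontrivial quadratic estimates (e.g.\ comparing $\tilde b(b-1)-\binom{\tilde b}{2}$ against $se_a+e_r$ after writing $\tilde b=sa+r$) to produce a decomposition $e_i\geq e_{i_1}+\dots+e_{i_s}$. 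None of that work appears in your proposal; you only name it as ``the main obstacle.'' Framing the verification as ``a finite check once $t$ and $\beta$ are pinned down'' is misleading for the infinite family of all $(a,b,c)$ with $a<b$ --- it is a parametric inequality, not a computation. A secondary, smaller issue: you route the $a<b$ case through $\mathcal{K}_2$ and Corollary~\ref{cry:small_K_2}, which produces the answer in terms of $g^*_2$ and then requires an extra argument that $g^*_1=g^*_2$; the paper more directly applies Theorem~\ref{th:bridges} with $r=1$ after showing $\max\mathcal{K}_1\leq a\leq v-\beta$, which yields $g^*=g^*_1$ outright when combined with the lower bound. Neither route is wrong, but yours adds a step and still omits the central estimates.
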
 

\begin{proof}
First, let $a<b$. Due to Theorem~\ref{th:bridges}, it is sufficient to prove that the maximum element of $\mathcal{K}_1$ is at most $a$, since $a\leq v-\beta$.  Assume the contrary: there exists $i\in[a+1,a+b-1]$ such that $i\in\mathcal{K}_1$. Let $\tilde a\leq a$ and $\tilde b\leq b$ be such that the union of some $\tilde a$ vertices from the $K_a$-part of $F_{a,b,c}$ with some $\tilde b$ vertices from the $K_b$-part has exactly $i$ vertices and $e_i+1$ edges with endpoints in this union. Since $\tilde a+\tilde b=i$ and $\tilde a\leq a$, we get $\tilde b>0$.

If $\tilde b<a$, then 
$$
e_i\geq \tilde b(b-1)-{\tilde b\choose 2}+e_{\tilde a}=
\tilde b\left(b-1-\frac{\tilde b-1}{2}\right)+e_{\tilde a}\geq
\tilde b\left(a-\frac{\tilde b-1}{2}\right)+e_{\tilde a}>e_{\tilde b}+e_{\tilde a}
$$
--- a contradiction.

If $a\leq \tilde b<b$, then consider integers $s\geq 1$ and $0\leq r<a$ such that $\tilde b=sa+r$. We shall prove that $e_i\geq e_{\tilde a}+se_a+e_r$. In the same way as above, it is sufficient to show that $\tilde b(b-1)-{\tilde b\choose 2}\geq se_a+e_r$. Note that $e_a\leq {a\choose 2}+a-1$, $e_r\leq r a-{r\choose 2}$. We get
\begin{align*}
 \tilde b(b-1)-{\tilde b\choose 2} & =(sa+r) \left(b-\frac{\tilde b+1}{2}\right)\geq
 (sa+r)\frac{\tilde b+1}{2}\\
 &\geq(sa+r)\frac{a+r+1}{2}\geq
 se_a+s+\frac{sar}{2}+r\frac{a+r+1}{2}>se_a+ar\geq s e_a+e_r
\end{align*}
as needed.

Finally, let $\tilde b=b$. Then $\tilde a<a$. We let $\tilde b+\tilde a=sa+r$, where $s\geq 1$ and $0\leq r<a$. Note that $e_a\leq{a\choose 2}+c-1$ and $e_r\leq r(a-1)-{r\choose 2}+\min\{r,c\}$. We get
\begin{align*}
 e_i &={b\choose 2}+c+\tilde a(a-1)-{\tilde a\choose 2}-1
 =\frac{(sa+r-\tilde a)(sa+r-\tilde a-1)}{2}+c+\tilde a(a-1)-{\tilde a\choose 2}-1\\
 &\geq se_a+e_r+\frac{s^2-s}{2}a^2+s(ar-a\tilde a-c+1)+r^2-r(a+\tilde a)+a\tilde a+c-\min\{r,c\}-1.
\end{align*}
If $s=1$, then $r=\tilde b+\tilde a-a\geq \tilde a+1$. In this case,
$$
 e_i\geq se_a+e_r+r^2-\tilde ar-\min\{r,c\}\geq
 se_a+e_r+r-\min\{r,c\}\geq se_a+e_r.
$$
If $s\geq 2$, then
\begin{align*}
 e_i & \geq se_a+e_r+
(s-1)a^2+r^2+sar-(s-1)a\tilde a-\tilde ar-sc+s-ra+c-r-1\\
&\geq se_a+e_r+(s-1)a+r^2-sc+s+c-r-1\\
&\geq se_a+e_r+r^2+s-r-1\geq se_a+e_r.
\end{align*}
The first part of Theorem~\ref{th:F_a_b_c} follows.\\

Let $a=b=5$, $c=4$. Computing directly all $e_i$, $i\leq 7$, we get that the maximum element in $\mathcal{K}_2$ equals $5$. Since $\beta=3$, the second part of Theorem~\ref{th:F_a_b_c} follows from Theorem~\ref{th:bridges}.
\end{proof}

Unfortunately, we can not generalise Theorem~\ref{th:F_a_b_c} to the case $a=b>5$, $c\geq 4$, or $a=b=c=5$. Also, note that, when $a<b$, we get the upper bound $g^*_1$, and not $g^*_2$, as everywhere before in this section. Actually, $g^*_1=g^*_2$ in this case since $g^*_r$ is non-decreasing. Indeed, if $g^*_1(i)<g^*_2(i)$ for some $i$, then $\mathrm{wsat}(v+i,F_{a,b,c})<g^*_2(i)+\ell-1$ whereas $\mathrm{wsat}(n,F_{a,b,c})\leq g^*_2(n-v)+\ell-1$ for all $n$, that contradicts Theorem~\ref{th:g_2}.\\

We shall conclude this section by noting that it is not always true that $\mathrm{wsat}(n,F)= g^*_{\beta}(n-v)+\ell-1$ even when $\mathrm{wsat}(v,F)=\ell-1$. To see this, consider $F$ obtained by the deletion of a perfect matching (consisting of 4 edges) from $K_9$. It is obvious that $\mathrm{wsat}(v=9,F)=\ell-1=31$, that $\beta=6$ and that $g^*_3(i)=\frac{17}{3}i+O(1)$. Let us now show that actually $\mathrm{wsat}(n,F)\leq\frac{11}{2}n+O(1)$ implying $\mathrm{wsat}(n,F)<g^*_{\beta}(n-v)+\ell-1$ for $n$ large enough.

Consider a 4-set $P\subset V(F)$ of single ends of all 4 edges of the missing matching in $F$. Clearly $P$ induces a clique, and there are exactly 22 edges in $F$ adjacent to $P$. Let us show that from any weakly saturated $H$ we may get another weakly saturated graph $\tilde H$ by adding 4 vertices and 22 edges, that clearly implies the desired claim. Let $\tilde H$ be obtained from $H$ by adding the 4-clique $P$ together with the 16 edges going from $P$ to some 5 vertices $v_1,\ldots,v_5$ in $H$ exactly as in $F$. Delete one of the edges with both ends in $P$, and add an edge from $P$ to some $v_6\in V(H)\setminus\{v_1,\ldots,v_5\}$ instead. It is easy to see that the final graph $\tilde H$ is indeed weakly saturated. First of all, the deleted edge in $P$ and all missing edges between $P$ and $v_1,\ldots,v_5$ can be added since $\mathrm{wsat}(v,F)=\ell-1$. Secondly, we may add all edges from $v_6$ to $P$, and then add all the other edges.

\subsection{Proof of Theorem~\ref{th:g_2}}
\label{sc:proof-th:g_2}

The proof strategy is actually similar to those in the proof of Theorem~\ref{th:main_lower_subadditive}, the main difference is that we will not force sets of vertices $B_{\kappa}$ to be disjoint. However, instead we will require sets of edges induced by $B_{\kappa}$ to be disjoint. That would actually imply that each pair of vertex sets has at most 1 vertex in common. 

So, let $n\geq v$. For $i\geq v$, set $f_2(i)=g_2^*(i-v)+\ell-1$. Let $H\in\underline{\mathrm{wSAT}}(n,F)$. Let $\mathcal{O}_H$ be the set of all vectors $(B_1,\ldots,B_k)$ such that
\begin{itemize}
\item for every $\kappa\in[k]$, $B_{\kappa}\subset[n]$, $|B_{\kappa}|\geq v$,
\item for $\kappa_1\neq\kappa_2$, $E(H|_{B_{\kappa_1}})\cap E(H|_{B_{\kappa_2}})=\varnothing$,
\item for every $\kappa\in[k]$, $|E(H|_{B_{\kappa}})|\geq f_2(|B_{\kappa}|)$,
\item $|B_1|\geq|B_2|\geq\ldots\geq|B_k|$.
\end{itemize}

Note that the inequality $|E(H|_{B_{\kappa}})|\geq f_2(|B_{\kappa}|)$ immediately implies that $|E(H|_{B_{\kappa}})|=f_2(|B_{\kappa}|)$ since, by the assumption of Theorem~\ref{th:g_2}, $f_2(|B_{\kappa}|)$ edges is enough to saturate a clique on $B_{\kappa}$. Moreover, for $\kappa_1\neq\kappa_2$, $|B_{\kappa_1}\cap B_{\kappa_2}|\leq 1$, since otherwise we may find a weakly $F$-saturated $\tilde H$ in $K_n$ with the number of edges less than in $H$. Indeed, since there are no edges in $B:=B_{\kappa_1}\cap B_{\kappa_2}$, we may renew the $f_2(|B_{\kappa_1}|)$ edges induced by $B_{\kappa_1}$ in such a way that at least 1 edge is entirely inside $B$, and the new set of edges on $B_{\kappa_1}$ is weakly $F$-saturated in the clique $K_1$ on $B_{\kappa_1}$. Next, in the same way, it is also possible to renew the set of $f_2(|B_{\kappa_2}|)$ edges induced by $B_{\kappa_2}$ in such a way that at least 1 edge is entirely inside $B$, and this set of edges $E_2$ is weakly $F$-saturated in the clique $K_2$ on $B_{\kappa_2}$. Add to the constructed at the previous step graph all the edges from $E_2$ that are not entirely in $B$. We get the graph with less than $f_2(|B_{\kappa_1}|)+f_2(|B_{\kappa_2}|)$ edges which is weakly $F$-saturated in $K_1\cup K_2$ --- a contradiction with the minimality of $H$.

Let us also note that $\mathcal{O}_H$ is obviously non-empty since we may take a single $B$ which is the vertex set of the first copy of $F$ that appears in the bootstrap percolation process that starts at $H$. We order $\mathcal{O}_H$ lexicographically in the same way as in the proof of Theorem~\ref{th:main_lower_subadditive}, and define $\mathcal{B}^*_H$ as a maximal element of $\mathcal{O}_H$. Thus, it remains to prove the following lemma.

\begin{lemma}
$\mathcal{B}^*:=\max_{H\in\underline{\mathrm{wSAT}}(n,F)}\mathcal{B}^*_H=([n])$.
\label{lm:max_decomposition_2}
\end{lemma}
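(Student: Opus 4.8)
The plan is to mimic the proof of Lemma~\ref{lm:max_decomposition}, working with the refined poset $\mathcal{O}_H$ whose members are \emph{edge-disjoint} families of vertex sets (so any two of them share at most one vertex), and to argue that a maximal element across all $H\in\underline{\mathrm{wSAT}}(n,F)$ must be the single block $([n])$. Suppose for contradiction that $\mathcal{B}^*=(B_1,\dots,B_k)$ is this global maximum and $B_1\neq[n]$. As before I would first rule out the case where some vertex $w\notin B_1$ is joined to all of $B_1$ in $H$: then $|E(H|_{B_1\cup\{w\}})|=f_2(|B_1|)+|B_1|$, and since $|B_1|\geq v>\ell-1$ and $e_1=\delta-1$, the subadditivity-type estimate $g^*_2(|B_1|-v)+|B_1|\geq g^*_2(|B_1|-v)+e_1+1\geq g^*_2(|B_1|-v+1)$ shows $(B_1\cup\{w\})\in\mathcal{O}_H$ (edge-disjointness with the other $B_\kappa$ is preserved because we only added edges incident to the new vertex $w$, which lies in no other block), contradicting maximality. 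Here the key point compared to Theorem~\ref{th:main_lower_subadditive} is that adding $w$ keeps the edge sets disjoint since the new edges all touch $w\notin\bigcup_{\kappa\geq 2}B_\kappa$; if $w$ did lie in some later block, we would instead enlarge that block or merge, which still increases the lexicographic rank.

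Consequently there is an edge $\{u,v\}$ of $K_n$ whose endpoints lie in no common block, and I pick the first such edge in an $F$-bootstrap process on $H$, creating a copy $\tilde F$ of $F$. If $\tilde F$ is edge-disjoint from every $H|_{B_\kappa}$, I append $V(\tilde F)$ as a new block. Otherwise let $\mathcal{I}$ be the set of $\kappa$ with $E(\tilde F)\cap E(H|_{B_\kappa})\neq\varnothing$ (note this is the genuinely new feature: intersection is measured on edges, not vertices), set $W_\kappa$ to be the vertex set of $\tilde F$ inside $B_\kappa$, let $\tilde B=V(\tilde F)\cup\bigcup_{\kappa\in\mathcal{I}}B_\kappa$, and replace the blocks in $\mathcal{I}$ by the single block $\tilde B$. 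The crucial counting step is exactly as in Section~\ref{sc:main}: all edges of $\tilde F\setminus\{u,v\}$ that are not induced by some $B_\kappa$ already lie in $H$, so
$$
|E(H|_{\tilde B})|\geq \sum_{\kappa\in\mathcal{I}}f_2(|B_\kappa|)+\Bigl[\ell-1-\sum_{\kappa\in\mathcal{I}}\bigl(\ell-(e_{v-|W_\kappa|}+1)\bigr)\Bigr]\geq g^*_2\Bigl(\sum_{\kappa\in\mathcal{I}}(|B_\kappa|-|W_\kappa|)\Bigr)+\ell-1,
$$
using $g^*_2(|B_\kappa|-v)+g^*_2(v-|W_\kappa|)\geq g^*_2(|B_\kappa|-|W_\kappa|)$ by subadditivity, and then $\sum_\kappa(|B_\kappa|-|W_\kappa|)=|\tilde B|-v$ because the overlaps among $B_\kappa$ on $V(\tilde F)$ are each at most one vertex and are absorbed correctly. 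Hence $(B_1,\dots,B_{\kappa^*-1},\tilde B)\in\mathcal{O}_H$, and since $\tilde B$ strictly contains $B_{\kappa^*}$ (where $\kappa^*=\min\mathcal{I}$), this family beats $\mathcal{B}^*$ lexicographically — the desired contradiction.

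The one extra subtlety, and what I expect to be the main obstacle, is establishing that the rebuilt block $\tilde B$ really yields an \emph{edge-disjoint} family, i.e.\ that $E(H|_{\tilde B})$ stays disjoint from $E(H|_{B_\kappa})$ for $\kappa\notin\mathcal{I}$. By definition of $\mathcal{I}$, $\tilde F$ shares no edge with such $B_\kappa$, but $\tilde B$ also contains all edges of the $B_\kappa$, $\kappa\in\mathcal{I}$, and we must know these do not collide with $B_{\kappa'}$, $\kappa'\notin\mathcal{I}$; this is immediate since $\mathcal{B}^*\in\mathcal{O}_H$ already guarantees pairwise edge-disjointness of the original blocks. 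One must also verify that $|\tilde B|\geq v$ (clear, as $|\tilde B|\geq|B_{\kappa^*}|\geq v$) and that the $|E(H|_{\tilde B})|=f_2(|\tilde B|)$ equality (forced by the hypothesis of Theorem~\ref{th:g_2}) is consistent with the inequality just derived — it is, since the hypothesis says $f_2$ edges suffice to weakly saturate the clique, so the reverse inequality $|E(H|_{\tilde B})|\le f_2(|\tilde B|)$ would follow were it larger, contradicting minimality of $H$ exactly as in the pairwise-intersection argument preceding the lemma. Assembling these observations completes the proof that $\mathcal{B}^*=([n])$, which by the definition of $f_2$ gives $\mathrm{wsat}(n,F)=|E(H)|\geq f_2(n)=g^*_2(n-v)+\ell-1$, matching the assumed upper bound.
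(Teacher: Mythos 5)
Your proposal keeps the surface structure of Lemma~\ref{lm:max_decomposition}'s proof, but two of the places where the edge-disjoint setting genuinely differs from the vertex-disjoint one are not handled, and both are where the paper's actual proof has to do real work.

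\textbf{First step.} You conclude ``consequently there is an edge $\{u,v\}$ whose endpoints lie in no common block'' after ruling out a vertex $w\notin B_1$ adjacent to all of $B_1$. That inference is valid when blocks are vertex-disjoint (as in Theorem~\ref{th:main_lower_subadditive}), because any $w\notin B_1$ lies in no block at all. Here, however, a vertex $w\notin B_1$ can lie in other blocks and can be non-adjacent to some $u\in B_1$ while $u$ and $w$ share a \emph{different} block $B_u$. Your argument does not produce a cross-block non-edge in that configuration; your aside about ``enlarge that block or merge'' is not a proof. The paper deals with this by exploiting the outer maximum over $H\in\underline{\mathrm{wSAT}}(n,F)$: whenever $u\in B_1$ is non-adjacent to $w$ but shares a block $B_u$ with $w$, one \emph{renews} the edges inside $B_u$ (replacing $H|_{B_u}$ by another minimum weakly saturated graph on $B_u$ containing the edge $\{u,w\}$), which preserves edge-disjointness because blocks overlap in at most one vertex, and then reaches a contradiction for the renewed graph $\tilde H$. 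Without the renewal idea — and without phrasing the extremal object as a maximum over $H$ — this step cannot be completed.

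\textbf{Merging step.} You define $\mathcal{I}$ as the set of $\kappa$ with $E(\tilde F)\cap E(H|_{B_\kappa})\neq\varnothing$ and merge \emph{all} of these with $V(\tilde F)$. There are two problems. (i) Because $\{u,v\}$ is only the first \emph{cross-block} activation, some edges of $\tilde F\setminus\{u,v\}$ may have been activated earlier \emph{inside} a block $B_\kappa$ and hence are absent from $H$; such a block may have $|B_\kappa\cap V(\tilde F)|\geq 2$ yet share no edge of $H$ with $\tilde F$, so $\kappa\notin\mathcal{I}$. Those missing edges are nevertheless subtracted from the ``$\ell-1$'' when you estimate the number of $\tilde F$-edges already present in $H$, so your inequality
$\ell-1-\sum_{\kappa\in\mathcal{I}}(\ell-(e_{v-|W_\kappa|}+1))$ undercounts the loss: the sum must run over all $\kappa$ with $|W_\kappa|\geq 2$, as the paper does via its set $S$. (ii) Even setting that aside, $\sum_{\kappa\in\mathcal{I}}(|B_\kappa|-|W_\kappa|)$ is not $|\tilde B|-v$ when distinct blocks share a vertex outside $V(\tilde F)$; your appeal to overlaps ``absorbed correctly'' does not address this (one can rescue the inequality using monotonicity of $g_2^*$, but you do not argue this). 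The paper sidesteps the whole accounting mess by merging $V(\tilde F)$ with only \emph{one} block $B_q$ (the largest one meeting $\tilde F$ in $\geq 2$ vertices), renewing the other overlapping blocks so that $\tilde F$'s edges inside them coincide with $H$, and then the edge count is a single clean application of $e_i+g_2^*(|B_q|-v)\geq g_2^*(|B_q|+i-v)$. That single-block merge plus renewal is the essential technical content you are missing.
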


\begin{proof}
Assume the contrary. Let $H$ be a weakly $F$-saturated subgraph of $K_n$ such that $\mathcal{B}^*_H=\mathcal{B}^*$. Let us first prove that there exists a pair of non-adjacent in $H$ vertices $u$ and $v$ that do not belong to any common $B_{\kappa}$ from $\mathcal{B}^*$. Let $B:=B_1$ be the first set in $\mathcal{B}^*$. Due to the assumption, there exists a vertex $v$ outside $B$. Assume that every $u\in B$ is either adjacent to $v$ in $H$, or belongs together with $v$ to the same $B_u$ from $\mathcal{B}^*$. Take any $u\in B$ non-adjacent to $v$ and satisfying the second condition. As above, we may renew edges inside $B_u$ in order to make $u$ and $v$ adjacent while keeping the same amount of edges inside $B_u$. Note that we have not changed adjacencies in all the other $B_{\kappa}$ from $\mathcal{B}^*$ since any two sets share at most 1 vertex. Thus, proceeding in this way, we may actually make $v$ adjacent to all vertices of $B$. Denote the renewed graph by $\tilde H$. It is clear from the definition of $g_2^*$ that $g_2^*(|B|+1-v)\leq g_2^*(|B|-v)+|B|$ implying that $f_2(|B|+1)\leq f_2(|B|)+|B|$. Then, $(B\cup\{v\})\in\mathcal{O}_{\tilde H}$ --- contradiction with the maximality of $\mathcal{B}^*$.

We then take a pair of non-adjacent vertices $u$ and $v$ that do not belong to any common $B_{\kappa}$ from $\mathcal{B}^*$ and that is activated first in a bootstrap percolation process initiated at $H$. Let $\tilde F$ be a copy of $F$ that appears together with $\{u,v\}$ in this process. Consider the set $S$ of all $\kappa$ such that $|B_{\kappa}\cap V(\tilde F)|\geq 2$. Note that $S$ is non-empty since otherwise we may add $V(\tilde F)$ to $\mathcal{B}^*$ --- contradiction with maximality. Let $q\in S$ be such that $|B_q|$ is maximal. In the usual way, we may renew edges in every $B_{\kappa}$, $\kappa\in S\setminus\{q\}$, so that $B_{\kappa}\cap V(\tilde F)$ induces in $H$ exactly the same graph as in $\tilde F$. Thus, we may assume that all edges of $\tilde F$ other than $\{u,v\}$ and those that belong to $B_q$ initially belong to $H$. Note that the set of edges induced by $B:=B_q\cup V(\tilde F)$ equals the union of the set of edges induced by $B_q$ and $E(\tilde F)\setminus\{u,v\}$ since otherwise there exists a graph on $B$ which is weakly $F$-saturated and has less number of edges --- contradiction with the minimality of $H$.

Let us show that the tuple $\mathcal{B}$, composed of sets $B_{\kappa}$, $\kappa\notin S$, and $B$ placed in the right order, belongs to $\mathcal{O}_H$, contradicting the maximality of $\mathcal{B}^*$. The only not so trivial condition from the definition of $\mathcal{O}_H$ is the third one (in particular, note that the second one holds since any of $B_{\kappa}$, $\kappa\notin S$, does not contain edges from $E(\tilde F)\setminus\{u,v\}$). 
 It remains thus to verify this third condition. Denote $i:=v-|B_q\cap V(\tilde F)|$. Then
\begin{align*}
 |E(H|_B)| & \geq |E(H|_{B_q})|+e_i\\
 &\geq f_2(|B_q|)+e_i=
 g_2^*(|B_q|-v)+e_i+\ell-1\geq g_2^*(|B_q|+i-v)+\ell-1= f_2(|B|)
\end{align*}
completing the proof.
\end{proof}

\subsection{Proof of Theorem~\ref{th:bridges}}
\label{sc:proof-th:bridges}

We need several claims. First of all, let us prove that $\mathrm{wsat}(i+v,F)-(\ell-1)$ is subadditive.

\begin{claim}
For every graph $F$ and every $i,j\in\mathbb{Z}_{\geq 0}$, we have that 
$$
 \mathrm{wsat}(i+j+v,F)-(\ell-1)\leq [\mathrm{wsat}(i+v,F)-(\ell-1)]+[\mathrm{wsat}(j+v,F)-(\ell-1)].
$$
\label{cl:wsat-subadditive}
\end{claim}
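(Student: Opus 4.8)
The plan is to mimic the classical gluing argument that Alon used to show $\mathrm{wsat}(n,F)+(v-2)^2$ is subadditive (recalled at the start of Section~\ref{sc:main}), but to economize the ``interface'' so that we pay only $\ell-1$ extra edges rather than $(v-2)^2$. First I would take near-optimal weakly $F$-saturated graphs $H_1\in\underline{\mathrm{wSAT}}(i+v,F)$ and $H_2\in\underline{\mathrm{wSAT}}(j+v,F)$, living on vertex sets $V_1$ and $V_2$. The key idea is to let $V_1$ and $V_2$ \emph{overlap in exactly $v$ vertices} rather than be disjoint: fix a copy of $F$ inside $H_1$ (such a copy exists, since the very first edge of any bootstrap process on $H_1$ completes one, so $H_1$ contains all but one edge of some copy $\tilde F$ of $F$; actually it is cleaner to take $H_1$ to \emph{contain} $F$ outright, which we may do at the cost of recalling $\mathrm{wsat}(i+v,F)\geq \ell-1$ and that an optimal weakly saturated graph on $\geq v$ vertices must contain $\ell-1$ edges of a copy of $F$). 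Place $H_2$ so that its ``core'' copy-of-$F$ vertex set coincides with this distinguished copy inside $H_1$. The union graph $H$ then has $|V_1|+|V_2|-v=(i+j+v)$ vertices.

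Next I would count edges and argue weak saturation. The edge count is $|E(H_1)|+|E(H_2)|-(\text{edges counted twice})$; the doubly-counted edges all lie inside the shared $v$-set, and there are at least $\ell-1$ of them (the edges of the shared copy of $F$, all of which we arranged to be present in both $H_1$ and $H_2$). Hence $|E(H)|\leq \mathrm{wsat}(i+v,F)+\mathrm{wsat}(j+v,F)-(\ell-1)$, which rearranges to exactly the claimed inequality once we subtract $(\ell-1)$ from the left side. For weak saturation of $H$ in $K_{i+j+v}$: run the bootstrap process that saturates $H_1$ inside $K_{V_1}$ — each added edge still completes a copy of $F$ in the larger graph $H$, since adding vertices and edges never destroys copies of $F$ — to fill in the clique on $V_1$; then run the process that saturates $H_2$ inside $K_{V_2}$ to fill in the clique on $V_2$; at this point every edge within $V_1$ and within $V_2$ is present. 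Finally the edges between $V_1\setminus V_2$ and $V_2\setminus V_1$ must be added: here we use that each such vertex (say $w\in V_1\setminus V_2$) is adjacent, in the current graph, to all of the shared $v$-set, hence has at least $v-1\geq\delta$ neighbours forming a clique, so any edge from $w$ to a vertex of $V_2\setminus V_1$ can be activated — more carefully, once $w$ is joined to a $(\delta)$-clique among the shared vertices together with one endpoint, we complete a $K_{\delta+1}$ and then bootstrap up; one adds these cross edges greedily, always picking an edge that closes a triangle (or more) onto an already-large clique, exactly as in the standard argument. A clean way to organize this last part: after the first two phases the shared $v$-set is a clique, so for each $w\in V_1\setminus V_2$ we may activate edges from $w$ to $V_2\setminus V_1$ one at a time since each creates a copy of $F$ inside $K_{\{w\}\cup(\text{shared set})\cup(\text{one new vertex})}$ as long as this set has $\geq v$ vertices, which it does.

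The main obstacle I anticipate is the final cross-edge phase: one must make sure that \emph{every} missing edge between $V_1\setminus V_2$ and $V_2\setminus V_1$ actually lies in a copy of $F$ at the moment it is activated, not merely in a large clique. Since $F$ need not be a clique, ``being in a big clique'' is not automatically enough — but it is, because $K_{v}\supseteq F$, so any edge sitting inside a $K_v$ lies in a copy of $F$; thus it suffices to ensure that at activation time the edge $\{w,w'\}$ is contained in a set of $v$ vertices that already induces a complete graph. Choosing the $v-2$ other vertices from the shared $v$-set (a clique, after phase two), and noting $w$ is already adjacent to all of them and $w'$ becomes adjacent to all of them once earlier cross edges from $V_2\setminus V_1$ to the shared set are in place (they are, after phase one or two, since the shared set lies in $V_1$), we get the required $K_v$. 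Ordering the cross edges so that this invariant is maintained — e.g. process the vertices of $V_2\setminus V_1$ one at a time, and for the current vertex $w'$ first connect it to the whole shared $v$-set (possible since it lies in $V_1$, whose clique is complete after phase one), then to all of $V_1\setminus V_2$ — closes the argument. Everything else is routine bookkeeping. $\qed$
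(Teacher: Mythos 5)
Your overall gluing plan is in the right spirit, and the weak-saturation argument in the final two paragraphs is fine, but the edge count has a genuine gap. You form $H$ as a \emph{union} $H_1\cup H_2$ over a shared $v$-set and claim that ``there are at least $\ell-1$'' doubly-counted edges, ``all of which we arranged to be present in both $H_1$ and $H_2$.'' That arrangement is not justified. What you can guarantee is only that $H_1$ restricted to its core $v$-set contains $F\setminus e_1$ and $H_2$ restricted to its core contains $F\setminus e_2$, for some edges $e_1,e_2\in E(F)$ that are dictated by the (fixed, optimal) graphs $H_1,H_2$ and over which you have no control. After identifying the two $v$-sets by a bijection, the intersection of the two edge sets may have only $\ell-2$ edges (or fewer) if $e_1\neq e_2$ and $F\setminus e_1\not\cong F\setminus e_2$; the aside about taking $H_1$ to ``contain $F$ outright'' does not repair this, since an optimal weakly saturated graph need not contain a full copy of $F$ (e.g.\ any $H\in\underline{\mathrm{wSAT}}(v,F)$ with $\mathrm{wsat}(v,F)=\ell-1$ has only $\ell-1$ edges). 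So the inequality $|E(H)|\leq\mathrm{wsat}(i+v,F)+\mathrm{wsat}(j+v,F)-(\ell-1)$ does not follow.

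The paper sidesteps this alignment problem entirely, and the fix is small: do not take a union. Keep \emph{all} of $E(H_1)$, pick an \emph{arbitrary} $v$-set $U\subset V(H_1)$ (no structural requirement on $U$ at all), take the first copy $\tilde F$ created in a bootstrap process on $H_2$, \emph{delete} every edge of $H_2$ lying inside $V(\tilde F)$ (there are at least $\ell-1$ of them), and re-attach the remaining part of $H_2$ to $H_1$ by redirecting each edge between $W:=V(H_2)\setminus V(\tilde F)$ and $V(\tilde F)$ into an edge between $W$ and $U$ via any bijection $\varphi:V(\tilde F)\to U$. Now the edge count is exact — $|E(H_1)|+|E(H_2)|-|E(H_2|_{V(\tilde F)})|\leq \mathrm{wsat}(i+v,F)+\mathrm{wsat}(j+v,F)-(\ell-1)$ — with no overlap to match. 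Your three-phase saturation argument then goes through unchanged: after phase one, $U$ is a clique, hence a supergraph of $H_2|_{V(\tilde F)}$, so phase two (running $H_2$'s process on $U\cup W$) is legitimate, and phase three (filling cross edges via $K_v$'s through $U$) is as you wrote it.
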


\begin{proof}
Let $A\in\underline{\mathrm{wSAT}}(v+i,F)$, $B\in\underline{\mathrm{wSAT}}(v+j,F)$. Let $U$ be a set of vertices in $A$ of size $v$, and let $\tilde F$ be the first copy of $F$ that appears in an $F$-bootstrap percolation process initiated at $B$. Note that the number of edges in $B|_{V(\tilde F)}$ is at least $\ell-1$.

Consider the graph $H$ on $V(A)\cup[V(B)\setminus V(\tilde F)]$ constructed as follows. Add to $A$ disjointly the set of vertices $W:=V(B)\setminus V(\tilde F)$ and preserve those edges of $B$ that have at least one end in $W$ in the following way. The edges that are entirely inside $W$ just remain the same, and the edges between $W$ and $V(\tilde F)$ are moved to form a bipartite graph between $W$ and $U$: consider any bijection $\varphi:V(\tilde F)\to U$, and draw an edge $\{x\in W,\varphi(y)\}$ if and only if $\{x,y\}\in E(B)$. It is obvious that $H$ is a weakly $F$-saturated graph with at most $\mathrm{wsat}(i+v,F)+\mathrm{wsat}(j+v,F)-(\ell-1)$ edges, completing the proof.
\end{proof}

Next we claim that, for a subadditive function $g(i)$ not to exceed $g^*_r(i)$ for all $i$, it is sufficient to satisfy $g(i)\leq g^*_r(i)$ for $i\in\mathcal{K}_r$. 

\begin{claim}
Let $g:\mathbb{Z}_{\geq 0}\to\mathbb{R}$ be a subadditive function such that $g(0)=0$. If $g(i)\leq g^*_r(i)$ for all $i\in\mathcal{K}_r$, then the same inequality holds true for all $i\in\mathbb{Z}_{\geq 0}$.
\label{cl:from_K_r_to_all_Z}
\end{claim}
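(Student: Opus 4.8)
The plan is to proceed by strong induction on $i$. For $i \in \mathcal{K}_r$ the inequality $g(i) \le g^*_r(i)$ holds by hypothesis, and $g(0) = 0 = g^*_r(0)$ handles the base. So suppose $i \ge 1$, $i \notin \mathcal{K}_r$, and $g(j) \le g^*_r(j)$ for all $j < i$. I want to split $i$ into smaller pieces, apply subadditivity of $g$, apply the induction hypothesis to each piece, and recognize that the resulting sum dominates $g^*_r(i)$.

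First I would observe that it suffices to handle the case $i \le v - r$: indeed, if $i > v-r$, then since $1 \le v-r$ (note $r \le v-1$), we can write $i = (v-r) + (i-(v-r))$ with both summands in $[1, i-1]$ (or, more carefully, repeatedly peel off blocks of size $v-r$), apply subadditivity to get $g(i) \le \sum g(\text{pieces})$ with all pieces of size $< i$ and $\le v-r$, use the induction hypothesis on each, and finally use that $g^*_r$ is itself subadditive (which follows directly from its definition as a minimum over decompositions into parts of size $\le v-r$) to collapse $\sum g^*_r(\text{pieces}) \ge g^*_r(i)$ into a single term — wait, we need the inequality $\le$, so we use $g^*_r(i) \le \sum g^*_r(\text{pieces})$, which holds since any decomposition of the pieces refines to a decomposition of $i$. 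Hence $g(i) \le \sum g^*_r(\text{pieces}) $... no: we need $g(i) \le g^*_r(i)$, and we have $g(i) \le \sum g^*_r(i_t)$ for \emph{some} decomposition into parts $\le v-r$; but $g^*_r(i)$ is the \emph{minimum} of $\sum e_{i_t}$, not of $\sum g^*_r(i_t)$. Since each $g^*_r(i_t) \le e_{i_t}$ trivially (take the trivial one-part decomposition of $i_t$ — valid as $i_t \le v-r$), we actually need the reverse. The clean route: pick the optimal decomposition $i = i_1 + \dots + i_s$ witnessing $g^*_r(i) = e_{i_1} + \dots + e_{i_s}$ (which exists when $i \le$ something, but $g^*_r$ is defined for all $i$ by allowing $s$ up to $i$ and parts $\le v-r$, so such a decomposition always exists for $i\ge 1$). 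Each $i_t \le v-r < \dots$; but the $i_t$ need not be $< i$ — if $s = 1$ then $i_1 = i$.

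So the real case analysis is: either the optimal decomposition of $i$ has $s \ge 2$, in which case all parts $i_t$ satisfy $i_t < i$ and $i_t \le v-r$, so $g(i) \le \sum_t g(i_t) \le \sum_t g^*_r(i_t) \le \sum_t e_{i_t} = g^*_r(i)$ — wait, again $g^*_r(i_t) \le e_{i_t}$ is the wrong direction for the last step; instead note $g^*_r(i_t) \le e_{i_t}$ gives $\sum g^*_r(i_t) \le \sum e_{i_t} = g^*_r(i)$, which \emph{is} the direction we want, giving $g(i) \le g^*_r(i)$. The remaining case is $s = 1$, i.e. $g^*_r(i) = e_i$ and no nontrivial decomposition $i = i_1+\dots+i_s$ with parts $\le v-r$ beats $e_i$; combined with $i \notin \mathcal{K}_r$ — meaning there \emph{is} a decomposition $i = i_1 + \dots + i_s$, $s \ge 2$, $i_t \ge 1$, with $e_i \ge e_{i_1} + \dots + e_{i_s}$ (the definition of $\mathcal{K}$, intersected with $[v-r]$ to get $\mathcal{K}_r$, forces the parts to be usable) — we get a decomposition into parts that are automatically $< i$; I'd need to check the parts can be taken $\le v-r$, but since $i \le v-r$ already and the parts are smaller, that's automatic. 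Then $g(i) \le \sum_t g(i_t) \le \sum_t g^*_r(i_t) \le \sum_t e_{i_t} \le e_i = g^*_r(i)$, closing the induction.

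\textbf{The main obstacle} I anticipate is bookkeeping around the definition of $\mathcal{K}_r$: one must check that when $i \notin \mathcal{K}_r$ the witnessing decomposition $i = i_1 + \dots + i_s$ indeed has all parts in the allowed range $[1, v-r]$ so that the induction hypothesis and the definition of $g^*_r$ both apply to them, and one must correctly track the direction of each inequality ($g \le g^*_r \le e$ on smaller arguments, versus $\sum e_{i_t} \ge g^*_r(i)$ by minimality). These are routine once the case split ($i$ large vs.\ $i \le v-r$; and within the latter, $g^*_r(i) = e_i$ with $i \in \mathcal{K}_r$ handled by hypothesis versus $i \notin \mathcal{K}_r$) is organized, and no genuinely hard step remains.
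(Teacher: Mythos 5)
Your final argument is correct and matches the paper's proof in substance: both decompose $i$ into parts that eventually land in $\mathcal{K}_r$, then chain $g(i)\leq\sum g(i_t)\leq\sum g^*_r(i_t)\leq\sum e_{i_t}\leq g^*_r(i)$ using subadditivity of $g$, the bound on parts in $\mathcal{K}_r$, and minimality in the definition of $g^*_r$. The only stylistic difference is that you organize the repeated refinement of the decomposition (keep splitting parts not in $\mathcal{K}_r$) as a strong induction on $i$ with a case split on whether the optimal decomposition is trivial, whereas the paper performs the refinement in one pass and leaves the iteration implicit; your induction makes that step fully explicit, which is a modest gain in rigor.
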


\begin{proof}
Note that $g(0)=g^*_r(0)$ be the definition of $g^*_r$. Let us then take any $i\in\mathbb{N}$, $i\notin\mathcal{K}_r$. First of all, by the definition of $g^*_r$, there exist $i_1,\ldots,i_h\in[v-r]$ such that $i_1+\ldots+i_h=i$ and $g^*_r(i)=e_{i_1}+\ldots+e_{i_h}$. Secondly, by the definition of $\mathcal{K}_r$, for every $j\in[h]$ such that $i_j\notin\mathcal{K}_r$, there exist $i^j_1,\ldots,i^j_{s_j}\in\mathcal{K}_r$ such that $i^j_1+\ldots+i^j_s=i_j$ and $e_{i_j}\geq e_{i^j_1}+\ldots+e_{i^j_s}$. We conclude that there exist $i^1,\ldots,i^s\in\mathcal{K}_r$ such that $i=i^1+\ldots+i^s$ and 
$$
g^*_r(i)\geq e_{i^1}+\ldots+e_{i^s}\geq g^*_r(i^1)+\ldots+g^*_r(i^s)\geq g(i^1)+\ldots+g(i^s)\geq g(i),
$$
as needed.
\end{proof}

Let $r_1,r_2\in[v-1]$. Assume that the maximum element of $\mathcal{K}_{r_1}$ is at most $v-r_2$. It immediately implies that $\mathcal{K}_{r_1}\subseteq\mathcal{K}_{r_2}$. Then, for all $i\in\mathcal{K}_{r_1}$, $g^*_{r_2}(i)=e_i$. Therefore, for all $i\in\mathbb{Z}_{\geq 0}$, $g^*_{r_2}(i)\leq g^*_{r_1}(i)$ due to Claim~\ref{cl:from_K_r_to_all_Z}. On the other hand, assuming that $g^*_{r_2}(i)\leq g^*_{r_1}(i)$ for all $i\in\mathbb{Z}_{\geq 0}$ and that $i>v-r_2$ for some $i\in\mathcal{K}_{r_1}$, we get that there exist $i_1,\ldots,i_s\in[v-r_2]$ satisfying $i_1+\ldots+i_s=i$ and $g^*_{r_2}(i)=e_{i_1}+\ldots+e_{i_s}$. But then $e_i\geq g^*_{r_1}(i)\geq e_{i_1}+\ldots+e_{i_s}$ contradicting the fact that $i\in\mathcal{K}_{r_1}$. Setting $r_1=r$ and $r_2=\beta$, we immediately get the equivalence of the second and the third properties in Theorem~\ref{th:bridges}.

To prove the equivalence of the first and the second property, let us fix a graph $F$ with $\mathrm{wsat}(v,F)=\ell-1$,  and note that Claim~\ref{cl:wsat-subadditive} and Claim~\ref{cl:from_K_r_to_all_Z} imply that the inequality $\mathrm{wsat}(n,F)\leq g^*_r(n-v)+\ell-1$ is true for every $n$ if and only if the inequality $\mathrm{wsat}(i+v,F)\leq e_i+(\ell-1)$ is true for ever $i\in\mathcal{K}_r$. Then, the following lemma finishes the proof of Theorem~\ref{th:bridges}.

\begin{lemma}
Let $F$ be a graph such that $\mathrm{wsat}(v,F)=\ell-1$. Then $\mathcal{K}_{\beta}=\{i\in\mathcal{K}:\,\mathrm{wsat}(i+v,F)\leq e_i+(\ell-1)\}$.
\end{lemma}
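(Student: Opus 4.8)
\emph{Proof strategy.} Write $\mathcal{L}:=\{i\in\mathcal{K}:\mathrm{wsat}(i+v,F)\leq e_i+\ell-1\}$; the plan is to prove the two inclusions $\mathcal{K}_\beta\subseteq\mathcal{L}$ and $\mathcal{L}\subseteq\mathcal{K}_\beta$ separately. Everything hinges on two elementary facts about $g^*_\beta$ (note $\beta\leq v-2$, since deleting all but the two endpoints of any edge of $F$ already leaves a cut-edge). If $i\in\mathcal{K}$ and $i\leq v-\beta$, then a single part of size $i$ is admissible in the definition of $g^*_\beta(i)$, while every decomposition of $i$ into $s\geq2$ parts has all parts at most $i-1\leq v-\beta$ and, since $i\in\mathcal{K}$, satisfies $e_{i_1}+\dots+e_{i_s}\geq e_i+1$; hence $g^*_\beta(i)=e_i$. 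If instead $i\in\mathcal{K}$ and $i>v-\beta$, then every admissible decomposition of $i$ has $s\geq2$ parts, and the same computation gives $g^*_\beta(i)\geq e_i+1$.

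\emph{The inclusion $\mathcal{K}_\beta\subseteq\mathcal{L}$.} Here I would prove the stronger statement that $\mathrm{wsat}(v+i,F)\leq e_i+\ell-1$ for every $i\leq v-\beta$, by an explicit construction. Pick $S\in\binom{V(F)}{i}$ attaining the minimum in the definition of $e_i$, put $R=V(F)\setminus S$, and pick $D\subset V(F)$ with $|D|=\beta$ so that $F\setminus D$ contains a cut-edge $\{a,b\}$; let $A$ be the vertex set of the component of $a$ in $(F\setminus D)-\{a,b\}$, so that $\{a,b\}$ is the only edge of $F$ joining $A$ to $V(F)\setminus(D\cup A)$. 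Starting from a minimum weakly $F$-saturated graph $H_0$ on $V(F)$ (it has $\ell-1$ edges), adjoin $i$ new vertices forming a set $S'$, fix a bijection $\sigma:S\to S'$, and add the edges $\{\sigma(s),r\}$ for all $\{s,r\}\in E(F)$ with $s\in S$, $r\in R$, together with a $\sigma$-copy of $E(F|_S)$ on $S'$ with one edge removed (if $E(F|_S)=\varnothing$, remove instead one copied $S'$--$R$ edge). Because $S$ attains the minimum, the resulting graph $H$ has exactly $(\ell-1)+(|E(F)\setminus E(F\setminus S)|-1)=e_i+\ell-1$ edges. To check that $H$ is weakly $F$-saturated in $K_{v+i}$, I would first restore all edges inside $V(F)$ (legitimate since $H_0$ is weakly $F$-saturated in $K_v$); once $V(F)$ spans a clique, adding back the single deleted copied edge completes the natural $\sigma$-copy of $F$ on $S'\cup R$, after which all edges inside $S'\cup R$ can be restored; finally, each missing edge $\{s',s\}\in S'\times S$ completes a copy of $F$ in which $\{s',s\}$ plays the role of $\{a,b\}$ — with $A$ embedded into $S'\cup R$, $D$ embedded into $R$ (possible exactly because $|D|=\beta\leq v-i=|R|$), and $V(F)\setminus(D\cup A)$ embedded into $V(F)$ — all required edges being present because $V(F)$ and $S'\cup R$ each span a clique, every $S'$--$R$ pair is adjacent, and the only $F$-edge between $A$ and $V(F)\setminus(D\cup A)$ is $\{s',s\}$ itself. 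It is precisely here that the value $\beta$ is used, and this is where I expect the bookkeeping (in particular the edge count and the degenerate cases) to be most delicate.

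\emph{From the construction to a global upper bound.} By Claim~\ref{cl:wsat-subadditive} the function $g(i):=\mathrm{wsat}(v+i,F)-(\ell-1)$ is subadditive with $g(0)=0$, and by the previous step together with the first elementary fact, $g(i)\leq e_i=g^*_\beta(i)$ for all $i\in\mathcal{K}_\beta$. Applying Claim~\ref{cl:from_K_r_to_all_Z} with $r=\beta$ upgrades this to $\mathrm{wsat}(n,F)\leq g^*_\beta(n-v)+\ell-1$ for every $n\geq v$.

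\emph{The inclusion $\mathcal{L}\subseteq\mathcal{K}_\beta$.} Take $i\in\mathcal{K}$ with $i>v-\beta$; it is enough to show $\mathrm{wsat}(v+i,F)\geq e_i+\ell$, and by the second elementary fact this follows from $\mathrm{wsat}(v+i,F)\geq g^*_\beta(i)+\ell-1$. I would obtain this last bound by rerunning the argument of Lemma~\ref{lm:max_decomposition_2} with the threshold $2$ replaced by $\beta$ (its hypothesis — an upper bound of the form $g^*_\beta(n-v)+\ell-1$ — being supplied by the previous paragraph): working with a minimum weakly $F$-saturated $H$ on $v+i$ vertices, one keeps a family of vertex subsets of size $\geq v$ with pairwise disjoint induced edge sets, each set $B$ spanning exactly $g^*_\beta(|B|-v)+\ell-1$ edges, and merges them as new copies of $F$ appear; a merge of a new copy $\tilde F$ with a member $B_q$ overlapping it in $w$ vertices gives $|E(H|_{B_q\cup V(\tilde F)})|\geq|E(H|_{B_q})|+e_{v-w}$, which upgrades to the required $g^*_\beta$-estimate precisely when $w\geq\beta$ (equivalently $e_{v-w}\geq g^*_\beta(v-w)$). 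The one genuinely new point — and the main obstacle of the whole lemma — is that in a minimum $H$ a merge never occurs with $2\leq w\leq\beta-1$: such an overlap should allow one to rearrange the weakly $F$-saturated copies carried by the overlapping subsets so as to delete an edge, contradicting the minimality of $H$, and this is exactly the step that uses the defining property of $\beta$, namely that removing fewer than $\beta$ vertices from $F$ leaves a graph with no cut-edge. Granting this, $\mathrm{wsat}(v+i,F)\geq g^*_\beta(i)+\ell-1\geq e_i+\ell>e_i+\ell-1$, so no $i\in\mathcal{K}$ with $i>v-\beta$ lies in $\mathcal{L}$, which together with the first inclusion completes the proof.
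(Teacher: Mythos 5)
Your first inclusion $\mathcal{K}_\beta\subseteq\mathcal{L}$ is in good shape and is essentially the same argument as the paper's proof of Corollary~\ref{crl:beta}: build the graph out of copies of $F$ restricted to a set $S$ attaining $e_i$, delete one edge, and use the cut-edge structure produced by removing $\beta$ vertices to fill in the final $S\times S'$ edges; then Claims~\ref{cl:wsat-subadditive} and~\ref{cl:from_K_r_to_all_Z} bootstrap this to the global upper bound. Your elementary facts about $g^*_\beta$ on $\mathcal{K}$ are also correct.

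The second inclusion $\mathcal{L}\subseteq\mathcal{K}_\beta$ is where the proposal breaks down, and the gap you flag as the ``main obstacle'' is not a technicality that can be patched --- it is the wrong route. You want to establish $\mathrm{wsat}(v+i,F)\geq g^*_\beta(i)+\ell-1$ by rerunning Lemma~\ref{lm:max_decomposition_2} with the threshold $2$ replaced by $\beta$, which requires that in a minimum $H$ a merging copy of $F$ never overlaps a tracked set $B_q$ in $2\leq w\leq\beta-1$ vertices. That claim is false in general: if it held, the same argument (which in no way uses $i\in\mathcal{K}$ or $i\leq v$) would give $\mathrm{wsat}(n,F)\geq g^*_\beta(n-v)+\ell-1$ for \emph{every} $n\geq v$ with $\mathrm{wsat}(v,F)=\ell-1$, and the paper refutes exactly this at the end of Section~\ref{sc:general_tight}: for $F=K_9$ minus a perfect matching one has $\mathrm{wsat}(v,F)=\ell-1$, $\beta=6$, yet $\mathrm{wsat}(n,F)\leq\tfrac{11}{2}n+O(1)<g^*_\beta(n-v)+\ell-1$ for $n$ large. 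So the lower bound you are aiming for is strictly stronger than what is true, and no amount of work on the merge step can rescue it.

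The paper's actual argument for $\mathcal{L}\subseteq\mathcal{K}_\beta$ uses $i\in\mathcal{K}$ in a crucial, qualitative way that your plan does not. Fixing $i\in\mathcal{K}\cap[v-\beta+1,v]$ with $\mathrm{wsat}(v+i,F)\leq e_i+\ell-1$ and a minimum $H$, one tracks the cumulative vertex sets $U_j=V(F_1)\cup\cdots\cup V(F_j)$ along a percolation process, extracts the strictly increasing subsequence, and shows that the increments $i_1+\cdots+i_h=i$ satisfy $\mathrm{wsat}(v+i,F)\geq \ell-1+\sum_t e_{i_t}$. The assumed upper bound then gives $e_i\geq\sum_t e_{i_t}$, and \emph{here} the membership $i\in\mathcal{K}$ forces $h=1$ and $i_1=i$. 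This collapses the whole process to two overlapping cliques $V_1,V_2$ meeting in $v-i\leq\beta-1$ vertices, and the first edge added between $V_1\setminus V_2$ and $V_2\setminus V_1$ sits in a copy of $F$ from which removing at most $v-i$ vertices leaves a cut-edge --- contradicting the definition of $\beta$. The use of $i\in\mathcal{K}$ to force $h=1$ (rather than trying to control every merge along the way) is the key idea you are missing, and it is precisely what lets the lower bound be proved only for $i\in\mathcal{K}$ rather than for all $n$, which is all the lemma needs.
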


\begin{proof}
Let us first proof Corollary~\ref{crl:beta} without relying to the statement of Theorem~\ref{th:bridges}. It would imply $\mathcal{K}_{\beta}\subseteq\{i\in\mathcal{K}:\,\mathrm{wsat}(i+v,F)\leq e_i+(\ell-1)\}$.

So, we fix $n\geq v$ and construct a weakly $F$-saturated graph $H$ with $g^*_{\beta}(n-v)+\ell-1$ edges. First of all, let us find $i_1,\ldots,i_s\in[v-\beta]$ such that $i_1+\ldots+i_s=n-v$ and $g^*_{\beta}(n-v)=e_{i_1}+\ldots+e_{i_s}$. Then, we construct $H$ as follows. Start from $H_0$ which is a copy of $F$ without a single edge. Clearly, $H_0$ is weakly $F$-saturated. Then, for every $j=1,\ldots,s$, assuming that a weakly $F$-saturated graph $H_{j-1}$ is constructed, set $V(H_j)=V(H_{j-1})\sqcup U_j$, where $U_j$ has size $i_j$, and $E(H_j)=E(H_{j-1})\sqcup E_j$, where $E_j$ has size $e_{i_j}$. The edges of $E_j$ are drawn in the following way: 
\begin{itemize}
\item let $\tilde F_j$ be a copy of $F$ with $U_j\subset V(\tilde F_j)$ such that $e_{i_j}+1$ is exactly the number of edges in $E(\tilde F_j)\setminus E(\tilde F_j\setminus U_j)$;
\item $E_j$ restricted on $U_j$ coincides with $E(\tilde F_j|_{U_j})$;
\item take an arbitrary set $W_j\subset V(H_{j-1})$ of size $|V(\tilde F_j)|-|U_j|=v-i_j$ and draw edges between $U_j$ and $W_j$ exactly in the same way as they appear between $U_j$ and $V(\tilde F_j)\setminus U_j$ in $\tilde F_j$;
\item remove a single edge from the final set of edges defined above.
\end{itemize}
It remains to prove that $H_j$ is weakly $F$-saturated, and then conclude that $H:=H_s$ is the desired weakly $F$-saturated graph on $n$ vertices. Note that there exists an $F$-bootstrap percolation process initiated at $H_j$ that ends up at the union $K$ of a clique on $V(H_{j-1})$ and $U_j\sqcup W_j$. Note that the intersection of these two sets $W_j$ has cardinality at least $\beta$ and both sets have cardinality at least $v$. 
 Thus, due to the definition of $\beta$, for every choice of $x\in V(H_{j-1})\setminus W_j$ and $y\in U_j$, a copy of $F$ can be placed inside $V(H_j)$ in such a way that $\{x,y\}$ is the only edge that does not belong to the union of cliques $K$. Thus, all the remaining edges of the clique on $V(H_j)$ can be activated, completing the proof.\\

It remains to prove that $\{i\in\mathcal{K}:\,\mathrm{wsat}(i+v,F)\leq e_i+(\ell-1)\}\subseteq\mathcal{K}_{\beta}$. Assume the contrary: take $i\in [v-\beta+1,v]\cap\mathcal{K}$ such that $\mathrm{wsat}(i+v,F)\leq e_i+(\ell-1)$. Let $H\in\underline{\mathrm{wSAT}}(v+i,F)$. Consider an $F$-bootstrap percolation process $H=H_0\subset\ldots\subset H_M=K_{v+i}$, each single edge from $H_j\setminus H_{j-1}$ creates $F_j\cong F$ in $H_j$. For every $j\in[M]$, let $U_j=V(F_1)\cup\ldots\cup V(F_j)$. Note that $U_M=V(H)$ since otherwise there exists a vertex $v\in V(H)$ adjacent to all the other vertices in $H$ and such that $H\setminus\{v\}$ is weakly $F$-saturated. But then deleting all but $\delta-1$ edges going from $v$ keeps the graph $H$ weakly $F$-saturated --- a contradiction with its minimality. Now, consider an inclusion-maximum sequence $U_1\subset U_{j_1}\subset\ldots\subset U_{j_h}$ such that each successive set is strictly bigger than its predecessor. Set $S_t=V(F_{j_t})\setminus U_{j_{t-1}}$, $t\in[h]$, where $j_0=1$. Note that 
$$
i_t:=|S_t|\leq v+i-|U_{j_{t-1}}|\leq v+i-|U_1|=i,
$$
that $e_{i_t}\leq E(H|_{U_{j_t}})\setminus E(H|_{U_{j_{t-1}}})$, and that $i_1+\ldots+i_h=i$. Then $\mathrm{wsat}(v+i,F)\geq\ell-1+\sum_{j=1}^h e_{i_j}$, and we get that 
$$
 e_i+(\ell-1)\geq\mathrm{wsat}(v+i,F)\geq\ell-1+\sum_{j=1}^h e_{i_j}.
$$
The obtained inequality $e_i\geq\sum_{j=1}^h e_{i_j}$ may only happen if $h=1$ and $i_1=i$ since $i\in\mathcal{K}$. But then $\mathrm{wsat}(v+i,F)=\ell-1+e_i$. It means that there is a bootstrap $F$-percolation process initiated at $H$ such that, firstly, a clique on $V_1:=V(F_1)$ is activated, secondly, a clique on $V_2:=V(F_{j_1})$ is activated, and, finally, all edges between $V_1\setminus V_2$ and $V_2\setminus V_1$ are activated. Let us take the first edge $\{x,y\}$ that appears between $V_1\setminus V_2$ and $V_2\setminus V_1$ and note that 
$$
|V_1\cap V_2|=|V_2|-|V_2\setminus V_1|=v-i_1=v-i\leq\beta-1.
$$
But then we get that there exists a copy of $F$ such that the deletion of at most $|V_1\cap V_2|\leq\beta-1$ vertices from it creates the cut-edge $\{x,y\}$ --- a contradiction with the definition of $\beta$.
\end{proof}

\section*{Appendix: the proof of Faudree, Gould and Jacobson}

Let us recall the proof of the lower bound~(\ref{wsat:lower_bound_general}) due to Faudree, Gould and Jacobson~\cite{Faudree}:

{\small
\begin{center}
Assume that $G\in\mathrm{wSAT}(n,F)$ for $n$ sufficiently large. Partition the vertices of $G$ into $A$ and $B$, with $B$ being the vertices of degree $\delta-1$ and $A$ the remaining vertices. Let $|B|=k$, and so $|A|=n-k$. The vertices in $B$ form an independent set, since the addition of a first edge to $v\in B$ must result in $v$ and all of its neighbors having degree at least $\delta$. Likewise, each vertex in $A$ must have degree at least $\delta - 2$ relatively to $A$ to be able to add edges between $B$ and $A$, since at most 2 vertices of $B$ can be used. This gives the inequality $(\delta-1)k+(\delta-2)(n-k)\leq\delta(n-k)$. This implies $k\leq 2n/(\delta+1)$, and so $\mathrm{wsat}(n,F)\geq\frac{\delta n}{2}-\frac{n}{\delta+1}.$
\end{center} 
}


First, the counting argument $(\delta-1)k+(\delta-2)(n-k)\leq\delta(n-k)$ is unclear and seems to be false since $k$ could be greater than $2n/(\delta+1)$. In particular, the set $B$ of $H_{v,n}\in\underline{\mathrm{wSAT}}(n,K_v)$ obtained by drawing all edges from vertices on $[n]\setminus[v-2]$ to the clique on $[v-2]$ has cardinality $n-\delta+1$ which is bigger than $2n/(\delta+1)$ for all $\delta>1$. Second, the lower bound for $\mathrm{wsat}(n,F)$ may follow only from a lower bound on $k$ (we have the bound $\mathrm{wsat}(n,F)\geq (\delta-1)k + \frac{(\delta-2)(n-k)}{2}=\frac{\delta-2}{2}n+k\frac{\delta}{2}$ that increases in $k$). So, as might appear at first sight, the authors wrote the opposite inequality --- it might be $k\geq 2n/(\delta+1)$. However, $B$ could be even empty (take $K_n\in\mathrm{wSAT}(n,K_v)$) and then $k=0$.

\end{document}